\tikzset{%
  operad style/.style={fill=white,inner sep=2.5pt},
  baseline=(current  bounding  box.center),
  ampersand replacement=\&, row sep=2em,column sep=2em,
  std/.style={->,font=\scriptsize}
}
\tikzset{%
  clr style/.style={fill=white,inner sep=2.5pt},
  baseline=(current  bounding  box.center),
  ampersand replacement=\&, row sep=2em,column sep=2em,
  std/.style={->,font=\scriptsize}
}
\newenvironment{tikzdiag}[2]
{
\begin{tikzpicture}[clr style]
\matrix (m) [matrix of math nodes, row sep=#1em, column sep=#2em]
}
{
\end{tikzpicture}
}
\newcommand{\pbdiag}[1][.4]{\begin{scope}[shift=($(m-1-1)!{#1}!(m-2-2)$)]\draw +(-0.25,0) -- +(0,0)  -- +(0,0.25); \end{scope}}
\def\release#1{\let#1\undefined}
\newcommand{\myindent}{\hspace{.5cm}}
\newcommand{\itemqed}{\vspace{-2.0em}}
\newlist{pfitems}{itemize}{1}
\setlist[pfitems]{label=--}
\newcommand{\addtoc}[1]{\addcontentsline{toc}{subsection}{\textcolor{NavyBlue}{\und{#1}}}}
\providecommand{\leftsquigarrow}{%
  \mathrel{\mathpalette\reflect@squig\relax}%
}
\newcommand{\reflect@squig}[2]{%
  \reflectbox{$\m@th#1\rightsquigarrow$}%
}
\newcommand{\Z}{\mathbb{Z}}
\newcommand{\kk}{\Bbbk}
\newcommand{\F}{\mathbb{F}}
\newcommand{\brac}{[ \cdot, \cdot ]}
\newcommand{\curv}{{\mathsf{curv}}}
\DeclareMathOperator{\MC}{\mathrm{MC}}
\DeclareMathOperator{\sMC}{\mathcal{MC}}
\DeclareMathOperator{\Hom}{\mathrm{Hom}}
\DeclareMathOperator{\id}{\mathrm{id}}
\newcommand{\op}{\mathrm{op}}
\newcommand{\cat}[1]{\mathsf{#1}}
\newcommand{\catC}{\cat{C}}
\newcommand{\Linf}{\mathsf{Lie}_\infty}
\newcommand{\Ch}{\mathsf{Ch}}
\newcommand{\Set}{\mathsf{Set}}
\newcommand{\sSet}{s\mathsf{Set}}
\newcommand{\Kan}{\mathsf{KanCplx}}
\DeclareMathOperator{\cdga}{\mathsf{cdga}}
\DeclareMathOperator{\Vect}{\mathsf{Vect}}
\newcommand{\maps}{\colon}
\newcommand{\xto}[1]{\xrightarrow{#1}}
\newcommand{\fib}{\rightarrow \mathrel{\mkern-14mu}\rightarrow}
\newcommand{\weq}{\xrightarrow{\sim}}
\newcommand{\trivfib}{\overset{\sim}{\twoheadrightarrow}}
\newcommand{\afib}{\trivfib}
\newcommand{\xfib}[1]{\xrightarrow{#1}\mathrel{\mkern-14mu}\rightarrow}
\newcommand{\xlfib}[1]{\leftarrow \mathrel{\mkern -14mu} \xleftarrow{#1}}
\newcommand{\heq}{\simeq}
\newcommand{\emb}{\hookrightarrow}
\newcommand{\adjunct}{\rightleftarrows}
\newcommand{\factor}[3]{ #1 \xrightarrow[{}^{\sim}]{i_{#2}} P(#2) \xfib{p_{#2}} #3}
\newcommand{\al}{\alpha}
\newcommand{\be}{\beta}
\newcommand{\Ga}{\Gamma}
\newcommand{\ga}{\gamma}
\newcommand{\Del}{\Delta}
\newcommand{\del}{\delta}
\newcommand{\ka}{\kappa}
\newcommand{\lam}{\lambda}
\newcommand{\ro}{\rho}
\newcommand{\Om}{\Omega}
\newcommand{\tha}{\theta}
\newcommand{\Tha}{\Theta}
\newcommand{\Ph}{\Phi}
\newcommand{\vph}{\varphi}
\newcommand{\vphi}{\varphi}
\newcommand{\si}{\sigma}
\newcommand{\ta}{\tau}
\newcommand{\jm}{\jmath}
\newcommand{\bev}[1]{\breve{#1}}
\newcommand{\bb}[1]{\vec{#1}}
\newcommand{\ba}[1]{\bar{#1}}
\newcommand{\ti}[1]{\tilde{#1}}
\newcommand{\ha}[1]{\hat{#1}}
\newcommand{\wh}[1]{\widehat{#1}}
\newcommand{\wti}[1]{\widetilde{#1}}
\newcommand{\und}[1]{{\underline{#1}}}
\newcommand{\ov}[1]{{\overline{#1}}}
\newcommand{\bul}{\bullet}
\newcommand{\cF}{\mathcal{F}}
\renewcommand{\iff}{if and only if\xspace}
\newcommand{\cc}{\circ}
\newcommand{\sse}{\subseteq}
\newcommand{\sss}{\supseteq}
\newcommand{\mtimes}[2]{\underbrace{#1  \cdots #1}_{\text{$#2$ {\rm times}}}}
\newcommand{\tensor}{\otimes}
\newcommand{\dsum}{\oplus}
\newcommand{\ev}{\mathrm{ev}}
\newcommand{\pr}{\mathrm{pr}}
\DeclareMathOperator{\chark}{\mathrm{char}}
\DeclareMathOperator{\ppr}{Pr}
\DeclareMathOperator{\im}{\mathrm{im}}
\renewcommand{\deg}[1]{\left \lvert #1 \right \rvert}
\newcommand{\Sh}{\mathrm{Sh}}
\renewcommand{\S}{\bar{S}}
\newcommand{\sQ}{Q^{1}}
\newcommand{\sPhi}{\Phi^{1}}
\newcommand{\pa}{\partial}
\newcommand{\bs}{\mathbf{s}}
\DeclareMathOperator{\Gr}{\mathrm{gr}}
\DeclareMathOperator{\Path}{\mathrm{Path}}
\newcommand*{\Simp}[1]{\Delta^{#1}}
\newcommand{\horn}[2]{\Lambda^{#1}_{#2}}
\DeclareMathOperator{\tow}{\mathsf{tow}}
\DeclareMathOperator{\ttow}{\mathrm{tow}}
\newcommand{\proj}{\mathrm{proj}}
\newcommand{\plim}{\varprojlim}
\newcommand{\dR}{\mathrm{dR}}
\newlist{pfcases}{itemize}{1}
\setlist[pfcases]{label=--}
\newlist{pfsteps}{itemize}{1}
\setlist[pfsteps]{label={}}
\theoremstyle{plain}
\newtheorem{theorem}{Theorem}[section]
\newtheorem*{theorem*}{Theorem}
\newtheorem*{theorem1}{Theorem 1}
\newtheorem*{theorem2}{Theorem 2}
\newtheorem*{theorem3}{Theorem 3}
\newtheorem{proposition}[theorem]{Proposition}
\newtheorem{lemma}[theorem]{Lemma}
\newtheorem*{lemma*}{Lemma}
\newtheorem*{corollary*}{Corollary}
\newtheorem{corollary}[theorem]{Corollary}
\theoremstyle{definition}
\newtheorem{definition}[theorem]{Definition}
\newtheorem*{ass*}{Assumption}
\newtheorem{notation}[theorem]{Notation}
\newtheorem{liftprob}{Lifting Problem}
\newtheorem*{liftprob*}{Lifting Problem (prelude)}
\newtheorem*{lftprob*}{Lifting Problem}
\newtheorem*{notation*}{Notation}
\newcommand{\neutralize}[1]{\expandafter\let\csname c@#1\endcsname\count@}
\newenvironment{liftprobbis}[1]
  {%
   \neutralize{liftprob}\phantomsection
   \begin{liftprob}}
  {\end{liftprob}}
\theoremstyle{remark}
\newtheorem{remark}[theorem]{Remark}
\numberwithin{equation}{section}
\renewcommand{\brac}[1]{ \{ \cdots  \}_{#1}}
\newcommand{\plimV}{\plim_{k} V/\cF_kV}
\newcommand{\qf}[2]{#1/\cF_{#2}#1}
\newcommand{\bp}{\ov{p}}
\newcommand{\hp}{\hat{p}}
\newcommand{\hV}{\widehat{V}}
\newcommand{\ctensor}{\widehat{\otimes}}
\newcommand{\fF}{\mathfrak{F}}
\newcommand{\cd}{\wh{d}}
\newcommand{\cQ}{\wh{Q}}
\renewcommand{\Ch}{\mathsf{Ch}^{\ast}}
\newcommand{\LdQ}[1]{(L^{#1},d^{#1},Q^{#1})}
\newcommand{\pp}[1]{(#1)}
\newcommand{\qq}[2]{#1_{(#2)}}
\newcommand{\qqa}[2]{#1_{(#2) \ast}}
\newcommand{\qpp}[1]{{\bp_{(#1)}}}
\newcommand{\qpr}[1]{{\bp'_{(#1)}}}
\newcommand{\bx}{\ba{x}}
\newcommand{\bcurv}[1]{{{\curv}_{(#1)}}}
\newcommand{\bcurvp}[1]{{{\curv'}_{(#1)}}}
\newcommand{\spQ}{Q^{\prime 1}}
\renewcommand{\catC}{\cat{C}}
\newcommand{\kkz}{\kk[z,dz]}
\newcommand{\pri}[1]{#1^{\prime}}
\newcommand{\sThe}{\Theta^1}
\newcommand{\sPsi}{\Psi^1}
\newcommand{\sPh}{\Ph^{1}_1}
\newcommand{\sMCq}[1]{\sMC (L / \cF_{#1}L)}
\newcommand{\sMCqq}[1]{\sMC (L' / \cF_{#1}L')}
\newcommand{\sMCr}[1]{\qq{\ro}{#1}}
\newcommand{\sMCrr}[1]{\qq{\ro'}{#1}}
\newcommand{\sMCp}[1]{\qq{\phi}{#1}}
\DeclareMathOperator{\tVect}{\tow(\Vect)}
\DeclareMathOperator{\cVect}{\widehat{\mathsf{Vect}}}
\DeclareMathOperator{\cCh}{\widehat{\mathsf{Ch}^\ast}}
\DeclareMathOperator{\fVect}{\mathsf{FiltVect}}
\DeclareMathOperator{\FLie}{\mathsf{FiltLie}\lbrack 1\rbrack_\infty}
\DeclareMathOperator{\Lie}{\mathsf{Lie}\lbrack 1\rbrack_\infty}
\DeclareMathOperator{\SLinf}{\mathsf{Lie}\lbrack 1\rbrack_\infty}
\DeclareMathOperator{\SLie}{\SLinf}
\DeclareMathOperator{\cLie}{\widehat{\mathsf{Lie}}\lbrack 1\rbrack_\infty}
\DeclareMathOperator{\cLinf}{\widehat{\mathsf{Lie}}_\infty}
\DeclareMathOperator{\strNLie}{{\mathsf{Lie}}\lbrack 1\rbrack^{\mathrm{nil/str}}_\infty}
\DeclareMathOperator{\nilstLie}{\strNLie}
\DeclareMathOperator{\strcLinf}{\widehat{\mathsf{Lie}}^{\mathrm{str}}_\infty}
\DeclareMathOperator{\bndfiltLie}{\widehat{\mathsf{Lie}}\lbrack 1\rbrack^{\mathrm{bd}}_\infty}
\DeclareMathOperator{\bdfltLie}{\bndfiltLie}
\DeclareMathOperator{\bdfltstLie}{\widehat{\mathsf{Lie}}\lbrack 1\rbrack^{\mathrm{bd/str}}_\infty}
\DeclareMathOperator{\bndfltstLie}{\bdfltstLie}
\DeclareMathOperator{\tbndfiltLie}{\tow(\bndfiltLie)}
\DeclareMathOperator{\tbdfltLie}{\tbndfiltLie}
\DeclareMathOperator{\tcurv}{\wti{\curv}}
\DeclareMathOperator{\str}{\mathrm{str}}
\DeclareMathOperator{\Cone}{\mathrm{Cone}}
 \title{
{\bf Complete $L_\infty$-algebras and their homotopy theory}
}
\renewcommand{\addtoc}[1]{\addcontentsline{toc}{section}{\textcolor{black}{\large \textbf{#1}}}}
\renewcommand\footnotemark{}
\renewcommand\footnotemark{}
\newcommand{\FF}{\mathbb{F}}
\newcommand{\J}{\mathfrak{I}}
\newcommand{\ctan}{\tan}
\newcommand{\cupp}{\smallsmile}
\newcommand{\sinf}{L[1]_{\infty}}
\newcommand{\cinf}{\widehat{L}[1]_{\infty}}
\newcommand{\df}[1]{\textbf{\textit{#1}}}
\author{Christopher L.\  Rogers 
% %%%BEGIN DRAFT VERSION
% \vspace{1cm}\\
% \textcolor{red}{DRAFT version: \currfilename} \\
% {\textcolor{red}{date: \today}}\\
% %%%END DRAFT VERSION
\thanks{\hspace{-4.25ex}Department of Mathematics \& Statistics, University of Nevada,
  Reno.\ 1664 N.\ Virginia Street Reno, NV 89557-0084 USA. {\it Email:} chrisrogers@unr.edu, chris.rogers.math@gmail.com.}
\thanks{MSC: 18N40, 17B55, 14D15}
\thanks{Keywords: $L$-infinity algebra, deformation theory, homotopical algebra}
}
\date{}
\begin{document}
\maketitle

\begin{abstract}
We analyze a model for the homotopy theory of complete filtered $L_\infty$-algebras 
intended for applications in algebraic and algebro-geometric deformation theory. We provide an explicit proof of an unpublished result of E.\ Getzler which states that the category $\cLinf$ of such $L_\infty$-algebras and filtration-preserving $\infty$-morphisms admits the structure of a category of fibrant objects (CFO) for a homotopy theory.
Novel applications of our approach include explicit models for homotopy pullbacks, and an analog of Whitehead's Theorem: under some mild conditions, every filtered $L_\infty$-quasi-isomorphism  in $\cLinf$ 
has a filtration preserving homotopy inverse. Also, we show that the simplicial Maurer--Cartan functor, which assigns a Kan simplicial set to each $L_\infty$-algebra in $\cLinf$, is an exact functor between the respective CFOs. Finally, we provide an obstruction theory for the general problem of lifting a Maurer-Cartan element through an $\infty$-morphism. The obstruction classes reside in the associated graded mapping cone of the corresponding tangent map.
\end{abstract}

\setcounter{tocdepth}{1}
\tableofcontents
\newpage

\section{Introduction} \label{sec:intro}
Complete filtered $L_\infty$-algebras, i.e.\ $\Z$-graded $L_\infty$-algebras $\bigl(L,d,\brac{k \geq 2} \bigr)$ equipped with a compatible complete descending filtration $L=\cF_1 L \sss \cF_2L \sss \cdots$ provide useful models for deformation problems over a field of characteristic zero in algebra and geometry. For example, see \cite{Pridham}, and the expository treatment \cite{Markl-book}. Two different categories of such complete $L_\infty$-algebras frequently appear in the literature. The first, which we denote as $\cLinf$, is the category whose morphisms are filtration-compatible weak $L_\infty$-morphisms (a.k.a.\  ``$\infty$-morphisms''). The second, $\strcLinf \sse \cLinf$, 
is the wide subcategory of the former whose morphisms are filtration-compatible strict $L_\infty$-morphisms. The weak morphisms in the larger first category are arguably more useful in applications, e.g.\ formal deformation quantization, and they are the ones that we focus on in this paper. 

The simplicial Maurer-Cartan functor $\sMC \maps \cLinf \to \Kan$ is a construction \cite{Getzler,Hinich:1997} that produces from any complete $L_\infty$-algebra a Kan simplicial set, or
$\infty$-groupoid. %Its properties crucially depend on the topology induced by the filtration. % generalize those of the Deligne groupoid associated to a nilpotent dg Lie algebra in formal deformation theory. %Let us 
We say that a morphism between complete $L_\infty$-algebras $L \to L'$ in $\cLinf$ is a {\it weak equivalence}\footnote{See Sec.\ \ref{sec:hmpty-Linf} for the precise definition.} if the restriction of its linear term or ``tangent map'' to each piece of the filtration is a quasi--isomorphism of subcochain complexes $\cF_nL \weq \cF_nL'$.
In particular, every weak equivalence is an $L_\infty$ quasi-isomorphism.
In previous joint work \cite{GM_Theorem} with V.\ Dolgushev, we generalized a result of E.\ Getzler \cite{Getzler} and established a connection between the homotopy theory of complete $L_\infty$-algebras and Kan complexes:
\begin{theorem*}[Thm.\ 1.1 \cite{GM_Theorem}]
If $\Phi \maps L \to L'$ is a weak equivalence in $\cLinf$, then $\sMC(\Phi) \maps \sMC(L) \to \sMC(L')$
is a homotopy equivalence between Kan complexes.
\end{theorem*}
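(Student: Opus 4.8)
The plan is to use the complete filtration to exhibit $\sMC(L)$ as the limit of a tower of Kan fibrations whose successive layers are controlled by the associated graded, to reduce the claim on each layer to a statement about cochain complexes, and then to assemble the equivalence by passing to the limit. Since $\sMC(L)$ and $\sMC(L')$ are Kan complexes, Whitehead's theorem reduces the assertion to showing that $\sMC(\Phi)$ is a weak homotopy equivalence, i.e.\ a bijection on $\pi_0$ and an isomorphism on $\pi_n$ based at every vertex.

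First I would reduce to the finite quotients of the filtration. Completeness gives $L = \plim_k L/\cF_k L$, and since a Maurer--Cartan element of an inverse limit is the same as a compatible system of Maurer--Cartan elements, $\sMC$ preserves this cofiltered limit: $\sMC(L) = \plim_k \sMC(L/\cF_k L)$, and similarly for $L'$. As $\Phi$ preserves filtrations, it descends to a map of towers $\{\sMC(L/\cF_k L)\} \to \{\sMC(L'/\cF_k L')\}$. Each projection $L/\cF_{k+1}L \fib L/\cF_k L$ is a strict surjection whose kernel is the abelian $L_\infty$-algebra $\Gr_k L := \cF_k L/\cF_{k+1}L$ --- abelian because $\{\cF_{i_1}L,\dots,\cF_{i_j}L\}_j \subseteq \cF_{i_1+\cdots+i_j}L$. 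The key technical input I would establish (following Getzler) is that $\sMC$ sends such surjections to Kan fibrations; granting this, $\sMC(L)$ and $\sMC(L')$ are presented as limits of towers of fibrations.

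Next I would identify the layers. Over a vertex $\alpha \in \sMC(L/\cF_k L)$ the fibre of $\sMC(L/\cF_{k+1}L) \fib \sMC(L/\cF_k L)$ is governed by the $\alpha$-twisted structure on $\Gr_k L$; but every twisting correction $\{\alpha,\dots,\alpha,-\}_j$ with $j \geq 2$ strictly raises filtration degree and hence vanishes on $\Gr_k L$. The fibre is therefore just $\sMC(\Gr_k L)$ with its intrinsic differential, independent of $\alpha$. Now $\Gr_k L$ is an abelian $L_\infty$-algebra, i.e.\ a cochain complex, on which $\sMC$ factors through the Dold--Kan correspondence; and the weak-equivalence hypothesis, via the five lemma applied to $0 \to \cF_{k+1}L \to \cF_k L \to \Gr_k L \to 0$, forces $\Gr_k L \weq \Gr_k L'$ to be a quasi-isomorphism, which $\sMC$ carries to a homotopy equivalence of layers.

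Finally I would assemble. Inducting up the tower from the abelian base $L/\cF_2 L = \Gr_1 L$, the long exact sequences of homotopy groups of the Kan fibrations together with the layerwise equivalence (and a direct check on $\pi_0$ at the base) show, via the five lemma, that $\sMC(L/\cF_k L) \to \sMC(L'/\cF_k L')$ is a weak homotopy equivalence for every finite $k$. A levelwise weak equivalence between towers of Kan fibrations induces a weak equivalence on limits, which one reads off from the Milnor sequences relating $\pi_n(\plim_k)$ to $\plim_k$ and $\plim^1_k$ of the homotopy groups of the stages, both sent isomorphically by the map. The hard part will be the two places where the homotopical content is concentrated: establishing that $\sMC$ turns the tower projections into Kan fibrations, which demands a genuine homotopy-lifting argument for the Maurer--Cartan equation, and controlling the $\plim^1_k$ obstructions so that the limit computes the homotopy limit --- it is precisely here that completeness of the filtration is indispensable.
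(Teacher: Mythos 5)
Your architecture---write $\sMC(L)=\plim_k\sMC(\qf{L}{k})$ using completeness, invoke Getzler's theorem to make the tower maps Kan fibrations, identify the layers with the Maurer--Cartan space of the abelian quotient $\cF_kL/\cF_{k+1}L$, compare layers via the five lemma applied to $0\to\cF_{k+1}L\to\cF_kL\to\cF_kL/\cF_{k+1}L\to 0$, and then pass to the limit of towers of fibrations---is essentially the strategy of the proof cited here (the present paper does not reprove the statement; it quotes it as Prop.~\ref{prop:GM} from \cite{GM_Theorem}, which in turn builds on \cite{Getzler}). There is, however, one genuine gap in your write-up, and it sits exactly where the Goldman--Millson content is concentrated: the inductive step on $\pi_0$. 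The fibre of $\sMC(\qf{L}{k+1})\fib\sMC(\qf{L}{k})$ over a vertex $\al$ is not ``just $\sMC(\cF_kL/\cF_{k+1}L)$''; it is either \emph{empty} or a torsor under that simplicial abelian group, and it is empty precisely when the obstruction class $[\curv_{(k+1)}(a)]\in H^1(\cF_kL/\cF_{k+1}L)$ of Prop.~\ref{prop:obstruct} is nonzero. The long exact sequence of a Kan fibration does not give surjectivity of $\pi_0$ of the total space onto $\pi_0$ of the base, so ``five lemma on the fibration sequences'' cannot by itself yield surjectivity of $\pi_0\sMC(\qf{L}{k+1})\to\pi_0\sMC(\qf{L'}{k+1})$: you must show separately that a vertex $\al$ lifts through $\bp_{(k)}$ if and only if $\Phi_*(\al)$ does. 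This requires comparing obstruction classes across $\Phi$, which follows from the identity \eqref{eq:curv-morph}: modulo $\cF_{k+1}$ one has $\curv(\Phi_*(a))\equiv\sPhi_1(\curv(a))$, so the obstruction class of $\Phi_*(\al)$ is the image of that of $\al$ under the isomorphism $H^1(\cF_kL/\cF_{k+1}L)\cong H^1(\cF_kL'/\cF_{k+1}L')$. This is an ingredient you must supply, not a formal consequence of the exact sequences you invoke.

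Two smaller corrections. The layerwise comparison should be phrased as a map of torsors (possibly empty on both sides) equivariant over the weak equivalence $\sMC(\cF_kL/\cF_{k+1}L)\to\sMC(\cF_kL'/\cF_{k+1}L')$; once nonemptiness is settled as above, choosing compatible basepoints reduces it to the abelian Dold--Kan-type statement you cite. And completeness is spent at the very first step, in identifying $\sMC(L)$ with the limit of the tower; after that, for a levelwise weak equivalence between towers of Kan fibrations the $\plim^1$ terms in the Milnor sequences are matched automatically by the levelwise isomorphisms of homotopy groups (cf.\ Prop.~\ref{prop:towmodcat}), so no additional $\plim^1$ control is needed there.
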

This is a generalization of the classical theorem of W.\ Goldman and J.\ Millson \cite[Thm.\ 2.4]{GM} from deformation theory, which was first brought into the $L_\infty$ context by Getzler for the special case of nilpotent $L_\infty$-algebras and {strict} $L_\infty$ quasi-isomorphisms \cite[Thm.\ 4.8]{Getzler}. The above theorem has turned out to be useful in a variety of applications beyond deformation theory, including the rational homotopy theory of mapping spaces \cite{Berglund,FTW}, and operadic homotopical algebra \cite{HAforms,RN:2020}.

The present work further develops and strengthens the above result.  We first give a finer-grained description of the homotopy theory within the category $\cLinf$. We then extend the aforementioned ``$L_\infty$ Goldman-Millson Theorem'' by establishing the compatibility of this additional homotopical structure with the Kan-Quillen model structure for simplicial sets. Finally, we use this machinery to build an obstruction theory for the general problem of lifting Maurer-Cartan elements through an arbitrary $\infty$-morphism.

\subsection{Main results}
Here we state our main results in terms of $L_\infty$-algebras, even though 
we work with  ``shifted'' $L_\infty$-algebras throughout the rest of the paper. The former are better known, but using the latter makes certain calculations more straightforward. The conceptual distinction is negligible, since we are  in the $\Z$-graded context.

In Section \ref{sec:hmpty-Linf}, we verify that the category $\cLinf$ of
complete $L_\infty$-algebras over a field $\kk$ with $\chark \kk=0$, and filtration-compatible $\infty$-morphisms form ``one-half of a model category'', i.e.\ a category of fibrant objects for a homotopy theory (Def.\ \ref{def:cfo}) in the sense of K.\ Brown \cite{Brown:1973}. 
We attribute the following statement to unpublished work\footnote{We learned of this statement during a conversation with E.\ Getzler at the ``Algebraic Analysis \& Geometry Workshop'' 
hosted by the University of Padua in the fall of 2013.}  of E.\ Getzler. Our proof, however, is novel, to the best of our knowledge:

\begin{theorem1}[Thm.\ \ref{thm:Linf_cfo}]\label{thm:A} 
The category $\cLinf$ admits the structure of a category of fibrant objects in which 
\begin{pfitems}[leftmargin=20pt]
\item a morphism $\Phi \maps \bigl(L,d,\brac{} \bigr) \to \bigl(\pri{L},d',\brac{}^{\prime}\bigr)$ is a weak equivalence if and only if
its tangent map $\ctan(\Ph) \maps (L, d) \to (L',d')$ induces for each $n \geq 1$ a quasi--isomorphism of cochain complexes
\[
\ctan(\Ph)\vert_{\cF_nL} \maps (\cF_{n}L, d) \weq (\cF_nL', d').
\]

\item a morphism $\Phi \maps \bigl(L,d,\brac{} \bigr) \to \bigl(\pri{L},d',\brac{}^{\prime}\bigr)$ is a fibration if and only if its tangent map $\tan(\Phi) \maps (L, d) \to (L',d')$ induces for each $n \geq 1$ an epimorphism of cochain complexes
\[
\tan(\Phi) \vert_{\cF_nL} \maps (\cF_{n}L, d) \fib (\cF_nL', d').
\]
\end{pfitems}
\end{theorem1}
The proof of the theorem is given in Sec.\ \ref{sec:Linf_cfo}. It builds on the 
following fact which we establish beforehand in Sec.\ \ref{sec:Ch_cfo}:
the CFO structure \cite{Hovey, Strick:2020} on the category $\Ch$ of $\Z$-graded cochain complexes over a field $\FF$ of any characteristic lifts to the category of complete filtered $\Z$-graded complexes $\cCh$. Our proof of Theorem 1 also provides explicit descriptions of pullbacks, and a ``strictification'' result for fibrations. The tractability of this approach relies on technical lemmas from B.\ Vallette's work \cite{Vallette:2014} on the homotopy theory of homotopy algebras. Furthermore, the category $\cLinf$ has a functorial path object, given by the completed tensor product $-\ctensor \Om_1$ with the cdga $\Om_1$ of polynomial de Rham forms on $\Delta^1$. 

Then, in Sec.\ \ref{sec:exact} we give an optimal homotopy-theoretic generalization of the classical Goldman--Millson theorem:
\begin{theorem2}[Thm.\ \ref{thm:sMC-exact}] \label{thm:C}
The simplicial Maurer-Cartan functor $\sMC \maps \cLinf \to \Kan$ is an exact functor between categories of fibrant objects (Def.\ \ref{def:exact_functor}). In particular:
\begin{enumerate}[leftmargin=20pt]
\item $\sMC$ sends weak equivalences in $\cLinf$ to weak homotopy equivalences between Kan simplicial sets.

\item $\sMC$ sends (acyclic) fibrations in $\cLinf$ to (acyclic) Kan fibrations. 

\item $\sMC$ sends pullbacks of (acyclic) fibrations in $\cLinf$ to pullbacks in $\Kan$.

\end{enumerate}
\end{theorem2}
Our proof of Theorem 2 follows the sketch we provided in the announcement \cite{Rogers:2018}.

\subsection{Applications} 
Our first application of the above results is an analogue of Whitehead's Theorem for weak homotopy equivalences. It follows from the abstract homotopical algebra developed in Sec.\ \ref{sec:cfo}.
\begin{theorem3}[Thm.\ \ref{thm:Linf-qinv}] \label{thm:B}
Every weak equivalence $\Phi \maps \bigl(L,d,\brac{} \bigr) \weq \bigl(\pri{L},d',\brac{}^{\prime}\bigr)$ in  $\cLinf$ has a homotopy inverse
in $\cLinf$ which can be exhibited by \und{any} choice of path objects for $L$ and $L'$.
In particular, there exists a weak equivalence $\Psi \maps \bigl(\pri{L},d',\brac{}^{\prime}\bigr) \weq \bigl(L,d,\brac{} \bigr)$ and homotopies 
\[
\mathcal{H}_L \maps  \bigl(L,d,\brac{} \bigr) \to \bigl (L \ctensor \Om_1, d_\Om, {\brac{}}_\Om \bigr ), \quad  \mathcal{H}_{L'} \maps \bigl(\pri{L},d',\brac{}^{\prime}\bigr) \to \bigl (L' \ctensor \Om_1, d'_\Om, {\brac{}^\prime}_\Om \bigr ), 
\] 
which induce equivalences $\Psi \Phi \simeq \id_{L}$  and $\Phi \Psi \simeq \id_{L'}$.
\end{theorem3}
The machinery in Sec.\ \ref{sec:hmtpy_pullback} also provides us with an explicit description of homotopy pullbacks in $\cLinf$. They can be used  as models for homotopy pullbacks of simplicial Maurer-Cartan sets thanks to the following corollary:
\begin{corollary*}[Cor.\ \ref{cor:MC-hpb}]
The functor $\sMC \maps \cLinf \to \Kan$ preserves homotopy pullbacks, up to homotopy equivalence.
\end{corollary*}
Lie models of homotopy pullbacks are often employed in the study of formal deformation problems within algebraic geometry e.g., \cite{Band-Man,Manetti}.
M.\ Matviichuk, B.\ Pym, and T.\ Schedler have already used our results for this purpose
in their work \cite{MPS} on the moduli space of log symplectic structures.

Another application which follows from Theorem 2 is the ``$\infty$-categorical'' uniqueness theorem for homotopy transferred structures that we sketched in \cite[Corollary 1]{Rogers:2018}. The statement  is roughly the following: Suppose we are given a cochain complex $A$, a
homotopy algebra $B$ of some particular type (e.g, an $A_\infty$,
$L_\infty$, or $C_\infty$-algebra) and a quasi-isomorphism of
complexes $\phi \maps A \to B$.  Then, using the simplicial
Maurer--Cartan functor, we can naturally produce an $\infty$-groupoid
$\fF$ whose objects correspond to solutions to the ``homotopy transfer
problem''. By a solution, we mean a pair consisting of a homotopy
algebra structure on $A$, and a lift of $\phi$ to a
$\infty$-quasi-isomorphism of homotopy algebras $A \weq B$. The fact
that $\sMC$ preserves both weak equivalences and fibrations allows
us to conclude that: (1) the $\infty$-groupoid $\fF$ is non--empty,
and (2) it is contractible. In other words, a homotopy equivalent
transferred structure always exists, and this structure is unique in the strongest possible sense.

Phrasing the homotopy transfer problem within the language of Maurer-Cartan elements reveals that it is a special case of a much more general question:
\begin{lftprob*}
Let $\Phi \maps \bigl(L,d,\brac{} \bigr) \to \bigl(\pri{L},d',\brac{}^{\prime}\bigr)$ be a morphism in $\cLinf$, and let $\al' \in \MC(L')$ be a Maurer-Cartan element in $L'$. Does there exist a Maurer-Cartan element $\al \in \MC(L)$ such that $\Ph(\al)$ is equivalent to $\al'$? 
\end{lftprob*}
\noindent By ``equivalent'', we mean that Maurer-Cartan elements $\Ph(\al)$ and $\al'$ lie in the same path component of $\sMC(L')$. (See Sec.\ \ref{sec:obstruct} for the precise definition.) This is the same relation as ``gauge equivalence'' when $L$ is a nilpotent dg Lie algebra. The above lifting problem lies behind a number of obstruction-theoretic arguments that appear in deformation theory and rational homotopy theory. In Sec.\ \ref{sec:obstruct}, we use the homotopical structure provided by Theorem 1 to characterize the obstructions to solving the lifting problem. We show that the obstruction classes reside in the cohomology of the associate graded mapping cone of the tangent map $\ctan(\Ph)$. See Theorem \ref{thm:lp} for the precise statement of this result.

\subsection{Related work}
There is a substantial amount of literature on the homotopy theory of $L_\infty$-algebras and the properties of the simplicial Maurer-Cartan functor. What follows is our attempt to carefully give attribution to these previous works, and clearly identify their relationship to the results presented in this paper.

\begin{itemize}[leftmargin=20pt]

\item The results of A.\ Lazarev \cite{Lazarev} and B.\ Vallette \cite{Vallette:2014} imply that the full subcategory of fibrant objects in V.\ Hinich's model category \cite{Hinich:2001} of $\Z$-graded conilpotent dg cocommutative coalgebras over $\kk$  is equivalent to the category $\Linf$ of $\Z$-graded $L_\infty$-algebras and $\infty$-morphisms. This gives $\Linf$ the structure of a CFO (with functorial path objects) in which the weak equivalences are $L_\infty$ quasi-isomorphisms and the fibrations are $L_\infty$-epimorphisms (see Def.\ \ref{def:Linf-maps-def}). This homotopical structure plays a crucial role in J.\ Pridham's unified approach to derived deformation theory \cite{Pridham}. 
When combined with our Theorem 1, these results imply that the
forgetful functor $\cLinf \to \Linf$ is an exact functor between CFOs.  

\item In \cite{{Hinich:1997}}, V.\ Hinich established statements (1) and (2) of Theorem 2 for the special case of strict morphisms between nilpotent dg Lie algebras.

\item As previously mentioned, in \cite{Getzler}, E.\ Getzler proved statements (1) and (2) of Theorem 2  for the special case of strict morphisms between nilpotent $L_\infty$-algebras.

\item In \cite{Yalin}, S.\ Yalin established statement (1) of Theorem 2
for the functor $\sMC \maps \strcLinf \to \Kan$, i.e., for the special case of strict filtration-compatible morphisms between complete $L_\infty$-algebras.
Yalin also proved in \cite[Thm.\ 4.2]{Yalin} statement (2) of Theorem 2 for the functor $\sMC \maps \strcLinf \to \Kan$, assuming certain finiteness conditions, as well as strictness of morphisms.

\item In \cite{Band}, R.\ Bandiera proved an analog of Theorem 2  for Getzler's Deligne $\infty$-groupoid functor $\ga_{\bul} \maps \strcLinf \to \Kan$, which can be thought of as a  ``smaller'' homotopy equivalent model for the simplicial Maurer-Cartan construction.
Similar results were also established by D.\ Robert-Nicoud in \cite{RN:2020}. However, we note that the functor $\ga_{\bul}$ is not well-defined on $\infty$-morphisms between $L_\infty$-algebras, and therefore does not extend to the category $\cLinf$.

\item A detailed analysis of the homotopy type of the simplicial Maurer-Cartan set $\sMC(L)$ and  
Deligne $\infty$-groupoid $\ga_{\bul}(L)$ for a complete $L_\infty$-algebra $L \in \strcLinf$ was given
by A.\ Berglund in \cite{Berglund}. Statement (1) of Theorem 2 
for the functor $\sMC \maps \strcLinf \to \Kan$, as well as the analogous statement for 
the functor $\ga_{\bul} \maps \strcLinf \to \Kan$ was proved in \cite[Prop.\ 5.4.]{Berglund}.
Statement (2) of Theorem 2 was also asserted \cite[Prop.\ 5.4.]{Berglund} for both functors $\sMC$ and $\ga_{\bul}$ with domain category $\strcLinf$, but the proof given there is unfortunately incomplete.

\item D.\ Robert-Nicoud and B. Vallette recently proved in \cite{RV:2020} that the Deligne $\infty$-groupoid functor $\ga_{\bul} \maps \strcLinf \to \Kan$ is representatable by a cosimplicial object in $\strcLinf$. Moreover, they equip $\strcLinf$ with a new model category structure \cite[Thm.\ 6.13]{RV:2020} which exhibits $\ga_\bul$ as the right adjoint in a Quillen pair $\sSet \adjunct \strcLinf$. Analogous results for complete dg Lie algebras were obtained by U.\ Buijs, Y.\ F\'{e}lix, A.\ Murillo, and D.\ Tanr\'{e} in their work \cite{BYMT} on Lie models in rational homotopy theory. We note that every strict weak equivalence in the sense of our Theorem 1 is a weak equivalence in the Robert-Nicoud--Vallette model structure. Although $\ga_\bul$ does not extend to a functor $\cLinf \to \Kan$, Robert-Nicoud and Vallette demonstrate in the same work \cite[Prop.\ 3.11]{RV:2020} that $\ga_{\bul}$ does induce a functor $\cLinf \to \cat{Ho(sSet)}$ which takes values in the homotopy category of simplicial sets. 

\item After a preliminary version of our paper appeared on the arXiv in 2020, D.\ Calaque, R.\ Campos, and J.\ Nuiten proved in \cite{CCN}  that our Theorems 1 and 2 can be recovered as a special case of their results on algebras over complete filtered operads.  

\end{itemize}

Finally, we note that the CFO structure explicitly described in Sec.\ \ref{sec:Ch_cfo} for the category $\cCh$ of complete filtered $\Z$-graded cochain complexes over a field  $\FF$ is equivalent to the one obtained by C.\ Di Natale \cite{DiNatale} by other methods.

\vspace{-.25cm}
\subsection*{Acknowledgments}
We thank Ezra Getzler for invaluable conversations  on the homotopy theory of $L_\infty$-algebras and related topics. The axioms \eqref{EA1} and \eqref{EA2} presented in Sec.\ \ref{sec:acyc} stemmed from discussions with Aydin Ozbek. We thank him for sharing his insights. We thank
Joost Nuiten for explaining to us his joint work \cite{CCN} with D.\ Calaque and R.\ Campos. We also thank Brent Pym for his comments on homotopy pullbacks of $L_\infty$-algebras. Finally, we are grateful for the careful reading of this manuscript by the anonymous referee, and we thank them for suggesting several improvements.

This paper is based upon work supported by the National Science Foundation under Grant No. DMS-1440140, while the author was in residence at the Mathematical Sciences Research Institute in Berkeley, California, during the Spring 2020 semester.
Additional support provided by a grant from the Simons Foundation/SFARI (585631,CR).

\section{Conventions} \label{sec:prelim}
Throughout, we work over a field. In Sec.\ \ref{sec:ch-htpy-thy}, the field $\FF$ may have arbitrary characteristic. From Sec.\ \ref{sec:Linf} onward, we set $\FF=\kk$ where  $\chark \kk =0$. 
All graded objects are assumed to be $\Z$-graded and, in general, unbounded. We use \textit{cohomological} conventions for all differential graded (dg) objects. We follow the conventions and notation of  
\cite[Sec.\ 1]{GM_Theorem} and \cite[Sec.\ 1.1.]{Enhanced} for graded linear algebra, Koszul signs, etc. In particular, for a $\Z$-graded vector space $V$, we denote by $\bs V$ (resp. by $\bs^{-1} V$) the suspension (resp. the desuspension) of $V$. In other words, 
\[
(\bs V)^{i}:=V[-1]^{i}:=V^{i-1} \qquad (\bs^{-1} V)^{i}:=V[1]^{i}:=V^{i+1}.
\]

Throughout, $\Vect$ denotes the category of $\Z$-graded vector spaces over $\FF$, and
$\Ch$ denotes the category of unbounded cochain complexes of $\FF$-vector spaces. 
We denote by $\Ch_\proj$ the category $\Ch$ equipped with the projective model structure 
\cite{Hovey, Strick:2020}. The weak equivalences in $\Ch_\proj$ are the quasi-isomorphisms, and the fibrations are those maps which are surjective in all degrees. Note that every object in $\Ch_\proj$ is fibrant. Finally, $\sSet$ denotes the category of simplicial sets, which we tacitly assume is equipped with the Kan-Quillen model structure, and $\Kan \sse \sSet$ denotes the full subcategory of Kan complexes as a category of fibrant objects \cite[Sec.\ I.9]{GJ}.

\section{Categories of fibrant objects} \label{sec:cfo}
In this section, we develop the abstract homotopical framework 
needed for our main results and their applications. The advantage of our approach is that
it is minimal and explicit. We are aiming  for a broad audience of ``end-users''. 
Hence, most of the exposition is self-contained, and the statements can be verified directly using basic category theory. With the exception of Sec.\ \ref{sec:towmodcat}, little knowledge of model categories and other methods in homotopical algebra are needed.

\begin{definition}[Sec.\ 1 \cite{Brown:1973}] \label{def:cfo}
Let $\cat{C}$ be a category with finite products, with terminal object $\ast
\in \cat{C}$, that is equipped with two classes of morphisms called
\df{weak equivalences} and \df{fibrations}. A morphism which
is both a weak equivalence and a fibration is called an
\df{acyclic fibration}. Then $\cat{C}$ is a
\df{category of fibrant objects (CFO)} for a homotopy theory \iff:
\begin{enumerate}
\item\label{item:cfoax1}{Every isomorphism in $\cat{C}$ is an acyclic fibration.}

\item\label{item:cfoax2}
{The class of weak equivalences satisfies  ``2 out of 3''. That is, if
    $f$ and $g$ are composable morphisms in $\cat{C}$ and any two of $f,g, g
    \circ f$ are weak equivalences, then so is the third.}

\item\label{item:cfoax3}{The composition of two fibrations is a fibration.}

\item\label{item:cfoax4}{The pullback of a fibration exists, and is a fibration.
That is, if $Y \xto{g} Z \xleftarrow{f} X$ is a diagram in $\cat{C}$ with $f$
    a fibration, then the pullback $X \times_{Z} Y$ exists, and
   the induced projection $X \times_{Z} Y \to Y$ is a  fibration.}

\item\label{item:cfoax5}{
The projection $X \times_Z Y \to Y$ in (4) is an acyclic fibration, whenever $f$ is an acyclic fibration. 
%The pullback of an acyclic fibration exists, and is an acyclic fibration.
 }

\item\label{item:cfoax6}{For any object $X \in \cat{C}$ there exists a (not necessarily
    functorial) \df{path object}, that is, an object
    $X^{I}$ equipped with morphisms
\[
X \xto{s} X^{I} \xto{(d_0,d_1)} X \times X,
\]
such that $s$ is a weak equivalence, $(d_0,d_1)$ is a fibration, and their
composite is the diagonal map.}

\item\label{item:cfoax7}{All objects of $\cat{C}$ are \df{fibrant}. That is, for any $X \in \cat{C}$ the unique map 
$ X \to \ast$ is a fibration.}
\end{enumerate}
\end{definition}
\noindent Throughout the paper, we frequently write $X \weq Y$ to denote a weak equivalence, and $X \fib Y$ to denote a fibration between objects $X$ and $Y$ in a CFO.

\subsection{Factorization and right homotopy equivalence} \label{sec:factor}
The axioms of a CFO imply Brown's Factorization Lemma \cite[Sec.\ 1]{Brown:1973}: Every morphism $f \maps X \to Y$ in $\catC$ can be factored as 
$f= p \cc \jm$, where $p$ is a fibration and $\jm$ is a right inverse of an acyclic fibration (and hence a weak equivalence). Let us briefly recall the construction. Let $Y \xto{s} Y^{I} \xto{(d_0,d_1)} Y \times Y$ be a path object for $Y$, and let $X \times_Y Y^I$ be the pullback in the diagram
\begin{equation} \label{diag:factor}
\begin{tikzdiag}{2}{2}
{
X \times_Y Y^I \& Y^I \\
X \& Y\\
};

\path[->,font=\scriptsize]
(m-1-1) edge node[auto] {$\pr_2$} (m-1-2) 
(m-2-1) edge node[auto,swap] {$f$} (m-2-2)
;
\path[->>,font=\scriptsize]
(m-1-1) edge node[auto,swap] {$\pr_1$} node[sloped,above] {$\sim$}(m-2-1) 
(m-1-2) edge node[auto,swap] {$d_0$} node[sloped,above] {$\sim$}  (m-2-2)
;

\pbdiag
\end{tikzdiag}
\end{equation}
Then the commutative diagram
\[
\xymatrix{
X  \ar[d]_{\id_X} \ar[r]^{sf} & Y^I \ar[d]^{d_0}\\
X \ar[r]^{f} & Y \\
}
\]
provides a unique weak equivalence $\jm \maps X \weq X \times_{Y}Y^{I}$ such that $\pr_1 \cc \jm = \id_X$.
Finally, define $p \maps X \times_{Y}Y^{I} \to Y$ to be the composition $p:= d_1 \circ \pr_2$. See \cite{Brown:1973}, or the proof of \cite[Lemma 2.3]{Rogers-Zhu:2020} for the verification that $p$ is indeed a fibration.

\begin{remark}\label{rmk:span}
The above construction combined with axiom \eqref{item:cfoax2} in Def.\ \ref{def:cfo} implies that if $\catC$ is a CFO, then any two objects $X,Y \in \catC$ connected by a weak equivalence $f \maps X \weq Y$ are connected by a span of acyclic fibrations
\[
\begin{tikzdiag}{1}{2.5}
{
X \& X \times_Y{Y^I} \& Y \\
%X \&  \& Y \\
};

\path[->>,font=\scriptsize]
(m-1-2) edge node[auto,swap] {$\pr_1$} node[sloped,below] {$\sim$}(m-1-1)
(m-1-2) edge node[auto] {$p$} node[sloped,below] {$\sim$}(m-1-3)
;
\end{tikzdiag}
\]
\end{remark}

We recall from \cite[Sec.\ 2]{Brown:1973} that two morphisms $f,g \maps X \to Y$ in a CFO are \df{(right) homotopic} \iff there exists a path 
object $Y \xto{s} Y^{I} \xto{(d_0,d_1)} Y \times Y$ and a morphism $h \maps X \to Y^I$ such that $f= d_0h$ and $g=d_1h$. In this case we write ``$ f \simeq g$''. In analogy with the situation in a model category, one shows that homotopy in a CFO is an {equivalence relation} by taking iterated  pullbacks of path objects. See for example the dual of \cite[Lemma 4]{Quillen:1967}.\\

The next proposition, which we recall from \cite{Brown:1973}, describes how homotopy behaves under pre and post-composition. It will be useful for the next section.

\begin{proposition}[Prop.\ 1 \cite{Brown:1973}]\label{prop:rhmtpy}
Let $\catC$ be a category of fibrant objects. Assume $\al, \be \maps A \to B$ are homotopic morphisms in $\catC$.
\begin{enumerate}
\item If $\mu \maps Z \to A$ is an arbitrary morphism, then $\al \mu \heq \be \mu$. %\maps Z \to B$.
\item If $\nu \maps B \to C$ is an arbitrary morphism, and $C^I$ is any path object for $C$, then there exists an acyclic fibration $\phi \maps Z \afib A$ such that $\nu \al \phi \heq \nu \be \phi$
% \[
%  \maps Z \to C
% \]
 via a homotopy $\ti{h} \maps Z \to C^I.$
\end{enumerate}
\end{proposition}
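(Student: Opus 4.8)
The plan is to dispatch the two parts separately, since they are of very different difficulty. Part~(1) is immediate: choose a path object $B \xto{s} B^I \xto{(d_0,d_1)} B\times B$ and a homotopy $h\maps A\to B^I$ witnessing $\al\heq\be$, so that $d_0 h=\al$ and $d_1 h=\be$; then for any $\mu\maps Z\to A$ the composite $h\mu\maps Z\to B^I$ satisfies $d_0(h\mu)=\al\mu$ and $d_1(h\mu)=\be\mu$, exhibiting $\al\mu\heq\be\mu$. I would simply record this. Part~(2) is the real content, and the difficulty is that path objects are not functorial in a CFO, so there is no way to transport $h$ directly along $\nu$ into the \emph{prescribed} path object $C^I$.

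Before describing the construction it is worth seeing why the obvious attempt fails, since this explains the role of the auxiliary acyclic fibration $\phi$. One might try to lift $(\nu\al,\nu\be)\maps A\to C\times C$ through the endpoint fibration $(d_0,d_1)\maps C^I\fib C\times C$ by forming the pullback $A\times_{C\times C} C^I$. The projection to $A$ is a fibration, but the fibers of $(d_0,d_1)$ are ``loop objects'' (already the pullback of $(d_0,d_1)$ along the diagonal $C\to C\times C$ is not a weak equivalence in general), so this route cannot produce the required acyclic fibration. One therefore cannot keep both endpoints pinned and simultaneously produce an acyclic fibration over $A$. The resolution is to work one endpoint at a time — against the \emph{single}-endpoint acyclic fibrations $d_0,d_1\maps C^I\afib C$ — and to absorb the resulting discrepancy by passing to an acyclic-fibration cover $\phi\maps Z\afib A$ of the source.

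The device I would use is Brown's Factorization Lemma (Section~\ref{sec:factor}) applied to $\nu$ relative to the given path object $C^I$. Concretely, this produces the pullback $E:=B\times_C C^I$ of the single-endpoint acyclic fibration $d_0\maps C^I\afib C$ along $\nu$, an acyclic fibration $\pi\maps E\afib B$ (the factor $\pr_1$, with right inverse $\iota\maps B\weq E$, $\pi\iota=\id_B$), and the canonical map $\eta:=\pr_2\maps E\to C^I$. By construction $d_0\eta=\nu\pi$ and $d_1\eta=p_\nu$, where $p_\nu$ is the fibration part of the factorization; in other words $\eta$ is a ``universal'' $C^I$-homotopy $\nu\pi\heq p_\nu$ sitting over the acyclic fibration $\pi$. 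The key point is that $\pi$ is acyclic, so pulling it back along any map into $B$ — in particular along $\al,\be\maps A\to B$, or along $h$ after first base-changing $\pi$ to $B^I$ — yields an acyclic fibration onto the source while transporting the $B$-level data into $C^I$.

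Carrying this out, pulling $\pi$ back along the two endpoints of $h$ produces, over an acyclic-fibration cover of $A$, two $C^I$-paths, each with one endpoint equal to the desired $\nu\al\phi$ (respectively $\nu\be\phi$) and one uncontrolled endpoint of the form $p_\nu(\,\cdot\,)$. The final and most delicate step is to cancel these uncontrolled endpoints: using that $\al\phi\heq\be\phi$ (Part~(1)) and transporting this homotopy through $\eta$ once more, the two junk endpoints become right homotopic after a further acyclic base change, and I would then splice the resulting $C^I$-paths into a single one by transitivity of the right-homotopy relation — realized, as recalled just before the proposition, through iterated pullbacks of $C^I$. Since every splicing and every base change is a pullback of an acyclic fibration, the cumulative projection $\phi\maps Z\afib A$ remains an acyclic fibration (by axioms \eqref{item:cfoax5} and \eqref{item:cfoax2}), while the spliced map $\ti{h}\maps Z\to C^I$ acquires precisely the endpoints $\nu\al\phi$ and $\nu\be\phi$. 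I expect the heart of the difficulty to lie exactly here: tracking the endpoints through the concatenations while certifying that each intermediate projection to $A$ is genuinely acyclic, and not merely a fibration.
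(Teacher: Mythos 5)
The paper does not actually prove this proposition --- it is quoted from Brown \cite{Brown:1973} --- so your attempt should be measured against Brown's argument. Part (1) of your proposal is correct and standard. Part (2), however, has two genuine gaps, and the diagnosis that drives your route is itself mistaken. First, your closing step is circular: after producing, over acyclic-fibration covers of $A$, the two $C^I$-homotopies $\nu\al\phi_1\heq p_\nu\ell_1$ and $\nu\be\phi_2\heq p_\nu\ell_2$ (where $\ell_i$ are the lifts into $E=B\times_C C^I$), you must show that the uncontrolled endpoints $p_\nu\ell_1$ and $p_\nu\ell_2$ become $C^I$-homotopic after a further base change. Your plan to ``transport the homotopy $\al\phi\heq\be\phi$ through $\eta$'' does not parse ($\eta$ is a map out of $E$, not out of $B$ or $B^I$), and what is actually required there is the ability to push a homotopy forward along $p_\nu\maps E\to C$ into the \emph{prescribed} path object $C^I$ --- i.e., an instance of part (2) itself. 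Second, even granting that step, splicing the three $C^I$-homotopies by transitivity does not yield a map into $C^I$: as the paper recalls, transitivity is realized by iterated pullbacks such as $C^I\times_C C^I$, which is a \emph{different} path object for $C$. Since the entire content of part (2) is that the homotopy can be realized in the given $C^I$, this loses exactly what was to be proved.

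The underlying error is your claim that one cannot keep both endpoints pinned while producing an acyclic fibration. One can: form $P:=B^I\times_{C\times C}C^I$ over $(\nu d_0,\nu d_1)\maps B^I\to C\times C$ and the endpoint fibration of $C^I$. The projection $q\maps P\fib B^I$ is indeed only a fibration, but it admits the canonical map $k\maps B\to P$ induced by the structure maps $B\to B^I$ and $B\xto{\nu}C\to C^I$, and $k$ satisfies $qk=s_B$. Factoring $k=\pi\jm$ with $\jm$ a weak equivalence and $\pi$ a fibration (Factorization Lemma), the composite $q\pi$ is a fibration with $(q\pi)\jm=s_B$ a weak equivalence, hence $q\pi$ is an \emph{acyclic} fibration by 2-out-of-3. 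Pulling $q\pi$ back along the homotopy $h\maps A\to B^I$ gives the acyclic fibration $\phi\maps Z\afib A$, and the composite $Z\to P\to C^I$ is the required $\ti{h}$, with endpoints $\nu d_0h\phi=\nu\al\phi$ and $\nu d_1h\phi=\nu\be\phi$. No splicing is needed and the homotopy lands in $C^I$ on the nose.
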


\subsection{Categories of ``bifibrant objects''} \label{sec:acyc}
We now consider a CFO $\catC$ which satisfies two additional axioms:
\begin{enumerate}[label={(EA\arabic*)}, ref=EA\arabic*, start=1]
\item \label{EA1} Every acyclic fibration in $\catC$ has a right inverse.

\item \label{EA2} $\catC$ is equipped with
\begin{itemize}[leftmargin=12pt]
\item \df{functorial path objects}: An assignment of a path object  
\[
X \quad \mapsto \quad  X \xto{s^X} \Path(X) \xto{(d^X_0,d^X_1)} \Path(X) \times \Path(X)
\]
to each object $X \in \catC$, and to each $f \maps X \to Y$, a morphism $f^I \maps \Path(X) \to \Path(Y)$ such that the following diagram commutes:
\[
\begin{tikzpicture}[descr/.style={fill=white,inner sep=2.5pt},baseline=(current  bounding  box.center)]
\matrix (m) [matrix of math nodes, row sep=2em,column sep=3em,
  ampersand replacement=\&]
  {  
X \& \Path(X) \& X \times X \\
Y \& \Path(Y) \& Y \times Y \\
}; 
  \path[->,font=\scriptsize] 
   (m-1-1) edge node[auto] {$s^X$} node[auto,below] {$\sim$} (m-1-2)
   (m-2-1) edge node[auto] {$s^Y$} node[auto,below] {$\sim$} (m-2-2)
   (m-1-1) edge node[auto] {$f$} (m-2-1)
   (m-1-2) edge node[auto] {$f^I$} (m-2-2)
   (m-1-3) edge node[auto] {$(f,f)$} (m-2-3)
  ;
  \path[->>,font=\scriptsize] 
   (m-1-2) edge node[auto] {$(d^X_0,d^X_1)$} (m-1-3)
   (m-2-2) edge node[auto] {$(d^Y_0,d^Y_1)$} (m-2-3)
;
\end{tikzpicture}
\]

\item \df{functorial pullbacks of fibrations:} 
An assignment to each diagram of the form  $X \xto{g} Z \xleftarrow{f} Y$, in which $f$ is a fibration, a universal cone  $X \xleftarrow{ \tilde{f}} X \times_Z Y \xto{\tilde{g}} Y$. The pullback exists by the CFO axioms, and $\tilde{f}$ is necessarily a fibration.
The universal property implies that this assignment functorial.
\end{itemize}
\end{enumerate}
\begin{remark}\label{rmk:bifib}
\mbox{}
\begin{enumerate}[leftmargin=15pt]
\item Axiom \eqref{EA1} can be understood as saying that all objects in $\catC$ are cofibrant, as well as fibrant. Hence, the term ``bifibrant'' appearing in the title of this section.

\item Axiom \eqref{EA2} implies the existence of \df{functorial factorizations}: 
Each morphism $f \maps Y \to Z$ in $\catC$ can be canonically factored as
\begin{equation} \label{eq:func-fact0}
Y \xto{i_f} P(f) \xto{p_f} Z
\end{equation}

where $P(f)$ is the functorial pullback $Y \times_Z \Path(Z)$ of $f$ along $d_0$ as in diagram \eqref{diag:factor}, and $p_f$ and $i_f$ are the associated fibration and weak equivalence, respectively. The factorization is natural: Given a commutative diagram 
\[
\begin{tikzpicture}[descr/.style={fill=white,inner sep=2.5pt},baseline=(current  bounding  box.center)]
\matrix (m) [matrix of math nodes, row sep=2em,column sep=3em,
  ampersand replacement=\&]
  {  
Y \& Z  \\
Y' \& Z' \\
}; 
  \path[->,font=\scriptsize] 
   (m-1-1) edge node[auto] {$f$} (m-1-2)
   (m-2-1) edge node[auto] {$f'$} (m-2-2)
   (m-1-1) edge node[auto,swap] {$\be$} (m-2-1)
   (m-1-2) edge node[auto] {$\ga$} (m-2-2)
  ;
\end{tikzpicture}
\]

there exists a unique morphism $\ro \maps P(f) \to P(f')$ such that the diagram
\begin{equation} \label{eq:func-fact}
\begin{tikzpicture}[descr/.style={fill=white,inner sep=2.5pt},baseline=(current  bounding  box.center)]
\matrix (m) [matrix of math nodes, row sep=2em,column sep=3em,
  ampersand replacement=\&]
  {  
Y \& P(f) \&  Z  \\
Y' \& P(f') \&  Z'  \\
}; 
  \path[->,font=\scriptsize] 
  (m-1-1) edge node[auto] {$i_f$} node[auto,below] {$\sim$} (m-1-2)
  (m-2-1) edge node[auto] {$i_{f'}$} node[auto,below] {$\sim$} (m-2-2)
  (m-1-1) edge node[auto] {$\be$} (m-2-1)
  (m-1-2) edge node[auto] {$\rho$} (m-2-2)
  (m-1-3) edge node[auto] {$\ga$} (m-2-3)
   ;

 \path[->>,font=\scriptsize] 
  (m-1-2) edge node[auto] {$p_f$} (m-1-3)
  (m-2-2) edge node[auto] {$p_{f'}$} (m-2-3)
;
\end{tikzpicture}
\end{equation}
commutes.

\item We will show in Sec.\ \ref{sec:hmpty-Linf} that the category of complete filtered shifted $L_\infty$-algebras forms a CFO which satisfies axioms \eqref{EA1} and \eqref{EA2}.
Relevant examples of CFOs from the literature which also satisfy these axioms include: the category $\Ch_\proj$ of $\Z$-graded cochain complexes over a field $\F$  equipped with the projective model structure \cite{Hovey, Strick:2020}; the subcategory of fibrant objects in V.\ Hinich's model category on $\Z$-graded conilpotent dg cocommutative coalgebras \cite{Hinich:2001} and the category of Kan simplicial sets $\Kan$ equipped with the CFO structure induced by D.\ Quillen's model structure on $\sSet$ \cite{Quillen:1967}.

\end{enumerate}
\end{remark}

A more noteworthy fact is that our additional axioms imply that every homotopy equivalence can be realized via the functorial path object.

\begin{proposition}\label{prop:bifib-htpy}
Let $\catC$ be a CFO satisfying axioms \eqref{EA1} and \eqref{EA2}. Morphisms $\al,\be \maps A \to B$ are homotopic in $\catC$ \iff there exists a homotopy $h \maps A \to \Path(B)$ such that $\al = d^B_0 h$ and $\be = d^B_1 h$.
\begin{proof}
Suppose $\al \heq \be \maps A \to B$. In statement (2) of Prop.\ \ref{prop:rhmtpy} take $C=B$, 
$\nu = \id_B$ and $C^I = \Path(B)$. Then there exists an acyclic fibration $\phi \maps Z \afib A$ and a homotopy $\ti{h} \maps Z \to \Path(B)$ which gives a homotopy equivalence
$\al \phi \heq \be \phi$. Axiom \eqref{EA1} implies that there exists a morphism $\si \maps A \to Z$ such that $\phi \si = \id_Z$. Define $h \maps A \to \Path(B)$ to be the composition $h:=\ti{h}\si \maps  A \to \Path(B)$. Then, as asserted in statement (1) of Prop.\ \ref{prop:rhmtpy}, $h$ gives the desired homotopy equivalence $\al \heq \be$ via the path object $\Path(B)$.  
\end{proof}
\end{proposition}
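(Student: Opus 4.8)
The plan is to handle the two implications separately, since all the content lies in one direction. The backward implication is immediate: if there exists $h \maps A \to \Path(B)$ with $\al = d^B_0 h$ and $\be = d^B_1 h$, then by the very definition of right homotopy, realized through the path object $\Path(B)$, we have $\al \heq \be$. The real work is the forward direction, whose difficulty is precisely that an arbitrary homotopy $\al \heq \be$ is only guaranteed to be realized through \emph{some} path object $B^I$, whereas we are asked to produce one realized through the \emph{distinguished} functorial object $\Path(B)$.

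To accomplish this I would invoke statement (2) of Prop.\ \ref{prop:rhmtpy}, which is engineered for exactly this situation. Applying it with $C = B$, $\nu = \id_B$, and $C^I = \Path(B)$ yields an acyclic fibration $\phi \maps Z \afib A$ together with a homotopy $\ti{h} \maps Z \to \Path(B)$ witnessing $\al \phi \heq \be \phi$ \emph{through the distinguished path object}. The price of switching the target path object to $\Path(B)$ is the appearance of the auxiliary object $Z$ and the acyclic fibration $\phi$; all that remains is to eliminate them.

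The main obstacle is precisely this elimination of $\phi$, and it is the only place where the extra hypotheses enter. Here I would invoke axiom \eqref{EA1}: since $\phi$ is an acyclic fibration, it admits a section $\si \maps A \to Z$ with $\phi \si = \id_A$. Setting $h := \ti{h} \cc \si \maps A \to \Path(B)$ and computing endpoints gives $d^B_0 h = \al \phi \si = \al$ and $d^B_1 h = \be \phi \si = \be$, so $h$ is the desired homotopy through $\Path(B)$. (Equivalently, one may appeal to statement (1) of Prop.\ \ref{prop:rhmtpy}, that right homotopy is preserved under precomposition, applied to $\si$.) I would finally observe that axiom \eqref{EA2} plays only a passive role here: its functoriality is not actually used, merely the existence of a distinguished choice of path object $\Path(B)$, so that the weaker axiom (EA$2^\prime$) would already suffice for this proposition.
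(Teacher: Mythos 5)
Your proof is correct and follows essentially the same route as the paper's: apply Prop.\ \ref{prop:rhmtpy}(2) with $\nu=\id_B$ and $C^I=\Path(B)$, then use axiom \eqref{EA1} to obtain a section of the resulting acyclic fibration and precompose the homotopy with it. Your explicit endpoint computation and the observation that only (EA$2^\prime$) is needed are accurate refinements of the same argument, not a different approach.
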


Next, we show that axioms \eqref{EA1} and \eqref{EA2} imply that every weak equivalence has a homotopy inverse.

\begin{proposition}\label{prop:bifib-invert}
Let $\catC$ be a CFO satisfying axioms \eqref{EA1} and \eqref{EA2}. Suppose $f \maps X \weq Y$ is a weak equivalence in $\catC$. Then there exist a weak equivalence $g \maps Y \weq X$
and homotopies 
\[
h_X \maps X \to \Path(X), \quad h_Y \maps Y \to \Path(Y)
\]
which induce equivalences $gf \heq \id_X$, and $fg \heq \id_Y$, respectively.
\end{proposition}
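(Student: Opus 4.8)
The plan is to realize the homotopy inverse $g$ as a zig-zag through the span of acyclic fibrations attached to $f$, and then to show that each such acyclic fibration is a genuine homotopy equivalence whose inverse is supplied by a section coming from \eqref{EA1}. Concretely, I would apply Brown's Factorization Lemma, as recalled in Remark~\ref{rmk:span}, to $f$: this factors $f$ through $W := X \times_Y Y^I$ as $f = p_f \circ \jm_f$, where $\pr_1 \maps W \afib X$ is an acyclic fibration, $\jm_f$ is a section of $\pr_1$ (so $\pr_1 \jm_f = \id_X$ and $p_f \jm_f = f$), and two-out-of-three forces $p_f \maps W \afib Y$ to be an acyclic fibration as well. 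Writing $p := \pr_1$, $q := p_f$ and $s := \jm_f$, I then use \eqref{EA1} to choose a section $t \maps Y \to W$ of $q$ and define $g := p\,t \maps Y \to X$. Since $qt = \id_Y$, two-out-of-three shows $t$, and hence $g = pt$, is a weak equivalence.

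The crux of the argument is the lemma that an acyclic fibration $\rho \maps E \afib B$ with section $\tau$ satisfies $\tau\rho \heq \id_E$. I would prove this using the fibre product $E \times_B E$ (the pullback of $\rho$ along itself), whose two projections $\pi_1,\pi_2$ are acyclic fibrations by the pullback axiom of a CFO (Definition~\ref{def:cfo}) and whose diagonal $\delta \maps E \to E \times_B E$ is a weak equivalence by two-out-of-three. Thus $E \xto{\delta} E \times_B E \xto{(\pi_1,\pi_2)} E \times E$ is a \emph{weak} path object; after replacing $(\pi_1,\pi_2)$ by a fibration via one further application of the Factorization Lemma, it becomes an honest path object for $E$. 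The map $(\id_E, \tau\rho) \maps E \to E \times_B E$ is well defined because $\rho = \rho\tau\rho$, and its two components are $\id_E$ and $\tau\rho$; reading it through the refined path object exhibits the homotopy $\id_E \heq \tau\rho$. Applying this to $p$ with section $s$ and to $q$ with section $t$ yields $sp \heq \id_W$ and $tq \heq \id_W$.

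It remains to compose these homotopies with $f = qs$ and $g = pt$. Pre-composition of homotopies is immediate from Proposition~\ref{prop:rhmtpy}(1); for post-composition I would first record that the functorial path object of \eqref{EA2} makes homotopy invariant under post-composition as well. Indeed, given $\al \heq \be$ realized by a map $h \maps A \to \Path(B)$ (Proposition~\ref{prop:bifib-htpy}) and any $\nu \maps B \to C$, the composite $\nu^I h \maps A \to \Path(C)$ witnesses $\nu\al \heq \nu\be$, using the commuting square that defines $\nu^I$. Hence $\heq$ is a congruence, and I may compute
\[
fg = q(sp)t \heq q\,\id_W\,t = qt = \id_Y, \qquad gf = p(tq)s \heq p\,\id_W\,s = ps = \id_X.
\]
Finally, Proposition~\ref{prop:bifib-htpy} upgrades each of these homotopies to one realized by the distinguished functorial path objects, producing $h_X \maps X \to \Path(X)$ and $h_Y \maps Y \to \Path(Y)$ as required.

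I expect the main obstacle to be the key lemma $\tau\rho \heq \id_E$ — in particular, justifying that the fibre product $E \times_B E$, refined to a genuine path object, legitimately witnesses this homotopy — together with the careful bookkeeping of post-composition of homotopies, which is precisely the point where the functoriality of the path object in \eqref{EA2} is indispensable.
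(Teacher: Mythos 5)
Your argument is correct, and it diverges from the paper's proof in how the two homotopies are produced. Both proofs begin identically: factor $f$ through $W = X\times_Y Y^I$, observe the span of acyclic fibrations $X \xlfib{\pr_1} W \xfib{p_f} Y$, use \eqref{EA1} to section the right leg, and set $g := \pr_1\si$. From there the paper proceeds asymmetrically: it reads the homotopy $fg \heq \id_Y$ directly off the pullback (the component $h = \pr_2\si \maps Y \to Y^I$ satisfies $d_0h = fg$ and $d_1h = \id_Y$ by the commutativity of the defining square), and then obtains $gf \heq \id_X$ only indirectly, by running the construction again on $g$ to get $\ti f$ with $g\ti f \heq \id_X$, proving $f \heq \ti f$ via two applications of Proposition \ref{prop:rhmtpy}(2) composed with sections from \eqref{EA1}, and invoking that homotopy is an equivalence relation. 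You instead isolate a symmetric key lemma --- any acyclic fibration $\rho$ with section $\tau$ satisfies $\tau\rho \heq \id$ --- proved by mapping $(\id,\tau\rho)$ into the fibre product $E\times_B E$ and refining $(\pi_1,\pi_2)$ to a fibration by one more factorization; applied to both legs of the span this gives $sp \heq \id_W$ and $tq \heq \id_W$, and the conclusion follows by pure composition once you note that \eqref{EA2} makes homotopy a two-sided congruence via $\nu^I$ (together with Proposition \ref{prop:bifib-htpy} to first transport any homotopy to the functorial path object). All the steps check out: $(\id_E,\tau\rho)$ lands in $E\times_B E$ because $\rho\tau\rho=\rho$, the diagonal is a weak equivalence by two-out-of-three against $\pi_1$, and $ps=\id_X$, $qt=\id_Y$ give the final identities. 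What your route buys is symmetry and reusability --- the key lemma and the congruence observation are statements of independent interest, and you avoid the iterate-and-compare step entirely; what it costs is the extra object $E\times_B E$ and a second factorization, whereas the paper's first homotopy comes for free from the explicit shape of $W$.
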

\begin{proof}
Let $f \maps X \weq Y$ be a weak equivalence. Note that Prop.\ \ref{prop:bifib-htpy} implies that it is sufficient to exhibit a two-sided homotopy inverse to $f$ using any path objects. Choose a factorization $f=p_f \jm_f$ as constructed in Sec.\ \ref{sec:factor}. As discussed in Remark \ref{rmk:span}, we have a span of acyclic fibrations of the form
\[
\begin{tikzdiag}{2}{2.5}
{
X \& X \times_Y Y^I \& Y \\
};

\path[->>,font=\scriptsize]
(m-1-2) edge node[auto,swap] {$\pr_1$} node[below] {$\sim$}  (m-1-1)
(m-1-2) edge node[auto] {$d_1\pr_2$} node[below] {$\sim$}  (m-1-3)
;
\end{tikzdiag}
\]
where $X \times_Y Y^I$, $\pr_1$ and $\pr_2$ are the pullback and projections from diagram \eqref{diag:factor}. Axiom \eqref{EA1} implies that there exists a section $\si \maps Y \to X \times_Y Y^I$ such that $d_1 \pr_2 \si = \id_Y$. Define $g \maps Y \weq X$ and $h \maps Y \to Y^I$ to be the morphisms $g:=\pr_1  \si$, and  $h:=\pr_2  \si$, respectively.
% \begin{equation*}
% g:=\pr_1  \si, \qquad h:=\pr_2  \si.
% \end{equation*}
Then the commutativity of diagram \eqref{diag:factor} implies that $d_0h = f \pr_1 \si = fg$,
% \[
% d_0h = f \pr_1 \si = fg,
% \]
and by construction, we have $d_1h = d_1 \pr_2 \si=\id_Y$. Hence
\begin{equation} \label{eq:bifib1}
fg \heq \id_Y
\end{equation}

Next, we repeat the above steps {\it mutatis mutandis} for the weak equivalence $g \maps Y \weq X$. This produces another weak equivalence $\ti{f} \maps X \weq Y$ such that $g \ti{f} \heq \id_X$. 
% \[
% g \ti{f} \heq \id_X. 
% \]
We proceed by using the standard tricks for this type of situation. Choose a path object $Y^I$ for $Y$, and in statement (2) of Prop.\ \ref{prop:rhmtpy}, take $\nu = f \maps X \weq Y$, $\al=g \ti{f}$, and $\be = \id_X$. Then there exists an acyclic fibration $\phi \maps Z \afib X$ and a homotopy $Z \to B^I$ which gives an equivalence
$f g\ti{f} \phi \heq f \phi$. Axiom \eqref{EA1} implies that $\phi$ has a right inverse, hence from Prop.\ \ref{prop:rhmtpy}(1) we deduce that $f g\ti{f} \heq f$. On the other hand, Prop.\ \ref{prop:rhmtpy}(1)
combined with the equivalence \eqref{eq:bifib1} implies that $f g\ti{f} \heq \ti{f}$. Hence, $f \heq \ti{f}$, since right homotopy equivalence is an equivalence relation.

Finally, we apply Prop.\ \ref{prop:rhmtpy}(2) again. This time we set  $\nu = g \maps Y \weq X$, $\al = f$ and $\be = \ti{f}$. We obtain $gf \psi \heq g \ti{f} \psi$, for some acyclic fibration $\psi$. We compose this with a right inverse of $\psi$ to obtain $gf  \heq g \ti{f}$, and therefore we conclude that $gf \heq \id_X.$
\end{proof}

\subsection{Homotopy pullbacks} \label{sec:hmtpy_pullback}
Throughout this subsection, $\catC$ denotes a CFO satisfying Axiom \eqref{EA1} and Axiom \eqref{EA2}.
\begin{definition}[cf.\ Sec.\ 13.3.1 \cite{Hirschhorn}] \label{def:hmtpy_pb}
Let $g \maps X \to Z$ and $f \maps Y \to Z$ be morphisms in $\catC$. Let  
\[
\factor{X}{g}{Z}, \qquad \factor{Y}{f}{Z}
\]
be the functorial factorizations of $g$ and $f$, respectively, as described in Rmk.\ \ref{rmk:bifib}. The \df{homotopy pullback} of the diagram $X \xto{g} Z \xleftarrow{f} Y$ is the pullback $P(g) \times_Z P(f)$ of the diagram\\ $P(g) \xfib{p_g} Z \xlfib{p_f} P(f).$
% \[
% P(g) \xfib{p_g} Z \xlfib{p_f} P(f).
% \]
\end{definition}
Note that the homotopy pullback  always exists by Axiom 4 of Def.\ \ref{def:cfo}, even if the pullback of the original diagram does not. Homotopy pullbacks in $\catC$ enjoy a number of useful properties, similar to those exhibited in a proper model category. 
In particular, as we demonstrate in the following proposition, the homotopy pullbacks of $X \xto{g} Z \xleftarrow{f} Y$ can be constructed, up to homotopy, by any factorizations of $g$ and $f$ into weak equivalences followed by fibrations, not just the functorial ones used in Def.\ \ref{def:hmtpy_pb}. It is, in fact, sufficient to factor only one of the morphisms in the original diagram. Moreover, we also have homotopy invariance: if two diagrams are (point-wise) weakly equivalent, then their respective homotopy pullbacks are weakly equivalent.

\begin{proposition} \label{prop:hpb}
Let $g \maps X \to Z$ and $f \maps Y \to Z$ be morphisms in $\catC$.
\begin{enumerate}[leftmargin=15pt]

\item If $X \xto{j_g} \ti{X} \xto{\ti{g}} Z$ and $Y \xto{j_f} \ti{Y} \xto{\ti{f}}Z$ are factorizations of $g$ and $f$, respectively, into weak equivalences followed by fibrations, then there is a weak equivalence $X \times_Z \ti{Y} \xto{\sim} \ti{X} \times_Z \ti{Y}$
% \[
% X \times_Z \ti{Y} \xto{\sim} \ti{X} \times_Z \ti{Y}
% \]
between the pullback of the diagram $X \xto{g} Z \xleftarrow{\ti{f}} \ti{Y}$ and  the pullback of the diagram $\ti{X} \xto{\ti{g}} Z \xleftarrow{\ti{f}} \ti{Y}$. In particular, we have
$X \times_Z P(f) \simeq P(g) \times_Z P(f)$.
% \[
% X \times_Z P(f) \simeq P(g) \times_Z P(f).
% \]

\item Given a commutative diagram of the form 
\[
\begin{tikzdiag}{2}{2}
{
X \& Z  \& Y \\
X' \& Z'  \& Y' \\
};
\path[->,font=\scriptsize]
(m-1-1) edge node[auto] {$g$} (m-1-2)
(m-1-3) edge node[auto,swap] {$f$} (m-1-2)
(m-2-1) edge node[auto] {$g'$} (m-2-2)
(m-2-3) edge node[auto,swap] {$f'$} (m-2-2)
(m-1-1) edge node[auto,swap] {$\al$} node[above,sloped]{$\sim$}  (m-2-1)
(m-1-2) edge node[auto,swap] {$\ga$} node[above,sloped]{$\sim$} (m-2-2)
(m-1-3) edge node[auto,swap] {$\be$} node[above,sloped]{$\sim$} (m-2-3)
;
\end{tikzdiag}
\]
in which all vertical morphisms are weak equivalences, the morphism $\ro \maps P(f) \to P(f')$ arising from the functorial factorization \eqref{eq:func-fact} induces a weak equivalence
\[
(\al,\ro) \maps X \times_Z P(f) \weq X'\times_{Z'}P(f').
\]
In particular, the homotopy pullback of the diagram 
$X \xto{g} Z \xleftarrow{f} Y$ is homotopy equivalent to the homotopy pullback of the diagram
$X' \xto{g'} Z' \xleftarrow{f'} Y'$.

\item If $Y \xto{j_f} \ti{Y} \xto{\ti{f}}Z$ is a factorization $f$, as in statement (1),  then there is a weak equivalence  $X \times_Z P(f) \xto{\sim} X \times_Z \ti{Y}$.
% \[
% X \times_Z P(f) \xto{\sim} X \times_Z \ti{Y}.
% \] 
In particular, the homotopy pullback of the diagram $X \xto{g} Z \xleftarrow{f} Y$ is homotopy equivalent to the pullback of the diagram $X \xto{g} Z \xleftarrow{\ti{f}} \ti{Y}$.
\end{enumerate}
\end{proposition}

For the proof, we will need a lemma:

\begin{lemma} \label{lem:hpb}
\mbox{}
\begin{enumerate}[leftmargin=15pt]

\item (``Co-gluing Lemma'') Given a commutative diagram in $\catC$
\[
\begin{tikzdiag}{2}{2}
{
A \& C  \& B \\
A' \& C'  \& B' \\
};
\path[->,font=\scriptsize]
(m-1-1) edge node[auto] {$$} (m-1-2)
%(m-1-3) edge node[auto,swap] {$f$} (m-1-2)
(m-2-1) edge node[auto] {$$} (m-2-2)
%(m-2-3) edge node[auto,swap] {$f'$} (m-2-2)
(m-1-1) edge node[auto,swap] {$$} node[above,sloped]{$\sim$}  (m-2-1)
(m-1-2) edge node[auto,swap] {$$} node[above,sloped]{$\sim$} (m-2-2)
(m-1-3) edge node[auto,swap] {$$} node[above,sloped]{$\sim$} (m-2-3)
;
\path[->>,font=\scriptsize]
(m-2-3) edge node[auto,swap] {$$} (m-2-2)
(m-1-3) edge node[auto,swap] {$$} (m-1-2)
;
\end{tikzdiag}
\]
in which both right hand side horizontal morphisms are fibrations, and in which  all vertical morphisms are weak equivalences, the induced morphism between pullbacks $A \times_C B \to A' \times_{C'} B'$ is a weak equivalence.

\item If $f \maps Y \fib Z$ is a fibration, then for all morphisms $g \maps X \to Z$ in $\catC$, the weak equivalence $i_f \maps Y \weq P(f)$ from the functorial factorization \eqref{eq:func-fact0} induces a weak equivalence between pullbacks 
\[
(\id_X,i_f) \maps X \times_Z Y \weq X \times_Z P(f).
\] 
\end{enumerate}
\end{lemma}
\begin{proof}
Statement (1) is the ``co-gluing lemma'' of \cite[Lem.\ 8.10]{GJ} which holds in any CFO.
For statement (2), the factorization of the fibration $f \maps Y \to Z$ fits into a commutative diagram
\[
\begin{tikzdiag}{2}{2}
{
X \& Z  \& Y \\
X \& Z  \& P(f) \\
};
\path[->,font=\scriptsize]
(m-1-1) edge node[auto] {$g$} (m-1-2)
%(m-1-3) edge node[auto,swap] {$f$} (m-1-2)
(m-2-1) edge node[auto] {$g$} (m-2-2)
%(m-2-3) edge node[auto,swap] {$f'$} (m-2-2)
(m-1-1) edge node[auto,swap] {$\id_X$}  (m-2-1)
(m-1-2) edge node[auto,swap] {$\id_Z$}  (m-2-2)
(m-1-3) edge node[auto,swap] {$i_Y$} node[above,sloped]{$\sim$} (m-2-3)
;
\path[->>,font=\scriptsize]
(m-1-3) edge node[auto,swap] {$f$} (m-1-2)
(m-2-3) edge node[auto,swap] {$p_{f}$} (m-2-2)
;
\end{tikzdiag}
\]  
Applying the co-gluing lemma then completes the proof.
\end{proof}

% For statement (1) of the Lemma, we 

\begin{proof}[Proof of Prop.\ \ref{prop:hpb}]
All three statements concern the existence of homotopy equivalences. It is therefore sufficient to
exhibit (zig-zags of) weak equivalences in each case, thanks to Prop.\ \ref{prop:bifib-invert}.
\begin{enumerate}[leftmargin=15pt]
\item Factorizations $X \xto{j_g} \ti{X} \xto{\ti{g}} Z$ and $Y \xto{j_f} \ti{Y} \xto{\ti{f}}Z$ of $g$ and $f$ give a commutative diagram
\[
\begin{tikzdiag}{2}{2}
{
X \& Z  \& \ti{Y} \\
\ti{X} \& Z  \& \ti{Y} \\
};
\path[->,font=\scriptsize]
(m-1-1) edge node[auto] {$g$} (m-1-2)
%(m-1-3) edge node[auto,swap] {$f$} (m-1-2)
%(m-2-3) edge node[auto,swap] {$f'$} (m-2-2)
(m-1-1) edge node[auto,swap] {$j_g$} node[above,sloped]{$\sim$}  (m-2-1)
(m-1-2) edge node[auto] {$\id_Z$}  (m-2-2)
(m-1-3) edge node[auto] {$\id_{\ti{Y}}$}  (m-2-3)
;
\path[->>,font=\scriptsize]
(m-2-3) edge node[auto,swap] {$\ti{f}$} (m-2-2)
(m-1-3) edge node[auto,swap] {$\ti{f}$} (m-1-2)
(m-2-1) edge node[auto] {$\ti{g}$} (m-2-2)
;
\end{tikzdiag}
\]
which satisfies the hypotheses of Lemma \ref{lem:hpb}(1). Hence, the induced map between pullbacks gives a weak equivalence $X \times_Z \ti{Y} \xto{\sim} \ti{X} \times_Z \ti{Y}$. 

\item Apply the functorial factorization as in \eqref{eq:func-fact} to the given diagram to
obtain a new commutative diagram:
\[
\begin{tikzdiag}{2}{2}
{
X \& Z  \& P(f) \\
X' \& Z'  \& P(f') \\
};
\path[->,font=\scriptsize]
(m-1-1) edge node[auto] {$g$} (m-1-2)
%(m-1-3) edge node[auto,swap] {$f$} (m-1-2)
(m-2-1) edge node[auto] {$g'$} (m-2-2)
%(m-2-3) edge node[auto,swap] {$f'$} (m-2-2)
(m-1-1) edge node[auto,swap] {$\al$} node[above,sloped]{$\sim$}  (m-2-1)
(m-1-2) edge node[auto,swap] {$\ga$} node[above,sloped]{$\sim$} (m-2-2)
(m-1-3) edge node[auto,swap] {$\ro$} node[above,sloped]{$\sim$} (m-2-3)
;
\path[->>,font=\scriptsize]
(m-1-3) edge node[auto,swap] {$p_f$} (m-1-2)
(m-2-3) edge node[auto,swap] {$p_{f'}$} (m-2-2)
;
\end{tikzdiag}
\]
By the ``2 out of 3'' axiom, since $i_{f'} \cc \be$ and $i_f$ are weak equivalences, $\ro$ is as well. Hence, the cogluing lemma implies that $(\al,\ro)$ is a weak equivalence. Statement (1) of the proposition implies that this is a weak equivalence between the respective homotopy pullbacks. 

\item Given a factorization $Y \xto{j_f} \ti{Y} \xto{\ti{f}}Z$ of $f \maps Y \to Z$, consider the commutative diagram
\[
\begin{tikzdiag}{2}{2}
{
X \& Z  \& {Y} \\
{X} \& Z  \& \ti{Y} \\
};
\path[->,font=\scriptsize]
(m-1-1) edge node[auto] {$g$} (m-1-2)
%(m-1-3) edge node[auto,swap] {$f$} (m-1-2)
%(m-2-3) edge node[auto,swap] {$f'$} (m-2-2)
(m-1-1) edge node[auto,swap] {$\id_g$}   (m-2-1)
(m-1-2) edge node[auto] {$\id_Z$}  (m-2-2)
(m-1-3) edge node[auto] {$j_{f}$} node[below,sloped]{$\sim$}  (m-2-3)
(m-1-3) edge node[auto,swap] {${f}$} (m-1-2)
(m-2-1) edge node[auto] {${g}$} (m-2-2)
;
\path[->>,font=\scriptsize]
(m-2-3) edge node[auto,swap] {$\ti{f}$} (m-2-2)
;
\end{tikzdiag}
\]
Statement (2) of the proposition yields a weak equivalence $X \times_Z P(f) \weq X \times_{Z}P(\ti{f})$,
where $P(\ti{f})$ arises from the functorial factorization $\ti{Y} \xto{i_{\ti{f}}} P(\ti{f}) \xto{p_{\ti{f}}} Z$ of the fibration $\ti{f} \maps \ti{Y} \fib Z$. Lemma \ref{lem:hpb}(2) then implies that $(\id_X,i_{\ti{f}}) \maps X \times_Z \ti{Y} \weq X \times_Z P(\ti{f})$ is a weak equivalence. We therefore obtain a zig-zag
\[
X \times_Z P(f) \weq X \times_{Z}P(\ti{f})  \xleftarrow{\sim} X \times_Z \ti{Y} 
\]
which provides the desired homotopy equivalence.
\end{enumerate}
\itemqed
\end{proof}

\subsection{Exact functors and homotopy pullbacks} \label{sec:exact_func}
The standard way to compare categories of fibrant objects is via left exact functors, which are the analog of right Quillen functors between model categories.  We recall the definition: 

\begin{definition} \label{def:exact_functor}
A functor $F \maps \cat{C} \to \cat{D}$ between categories of fibrant objects is a \df{(left) exact functor} \iff
\begin{enumerate}
\item $F$ preserves the terminal object, fibrations, and acyclic fibrations.
\item Any pullback square in $\cat{C}$ of the form
\[
\begin{tikzpicture}[descr/.style={fill=white,inner sep=2.5pt},baseline=(current  bounding  box.center)]
\matrix (m) [matrix of math nodes, row sep=2em,column sep=3em,
  ampersand replacement=\&]
  {  
P \& X \\
Z \& Y \\
}
; 
  \path[->,font=\scriptsize] 
   (m-1-1) edge node[auto] {$$} (m-1-2)
   (m-1-1) edge node[auto,swap] {$$} (m-2-1)
   (m-1-2) edge node[auto] {$f$} (m-2-2)
   (m-2-1) edge node[auto] {$$} (m-2-2)
  ;

%begin pullback symbol%
  \begin{scope}[shift=($(m-1-1)!.4!(m-2-2)$)]
  \draw +(-0.25,0) -- +(0,0)  -- +(0,0.25);
  \end{scope}
  %end pullback symbol%
\end{tikzpicture}
\]
in which $f \maps X \to Y$ is a fibration in $\cat{C}$, is mapped by $F$ to a pullback square in $\cat{D}$.
\end{enumerate}
\end{definition}

Note that the above axioms imply that an exact functor preserves finite products and weak equivalences. The latter statement follows from the factorization discussed in Sec.\ \ref{sec:factor} which implies that a weak equivalence is the composition of an  acyclic fibration with  a right inverse of another acyclic fibration.

Exact functors between two categories of fibrant objects both satisfying the additional axioms \eqref{EA1} and \eqref{EA2} preserve homotopy pullbacks up to homotopy equivalence:

\begin{proposition} \label{prop:exact-hpb}
Let $F \maps \cat{C} \to \cat{D}$ be an exact functor between CFOs satisfying Axiom 
\eqref{EA1} and Axiom \eqref{EA2}. Let $E$ denote the homotopy pullback of a diagram
$X \xto{g} Z \xleftarrow{f} Y$ in $\catC$. Then $F(E)$ is homotopy equivalent to the homotopy pullback 
$K$ of the diagram $F(X) \xto{F(g)} F(Z) \xleftarrow{F(f)} F(Y)$ in $\cat{D}$.
\end{proposition}
\begin{proof}
Let $\factor{Y}{f}{Z}$ denote the functorial factorization of $f \maps Y \to Z$ in $\catC$.
Since $F$ is exact, $F(Y) \xto{F(i_f)} F(P(f)) \xto{F(p_f)} F(Z)$ is a factorization in $\cat{D}$ of the morphism $F(f)$ into a weak equivalence followed by a fibration. Hence, the pullback $F(X) \times_{F(Z)} F(P(f))$ is homotopy equivalent to $K$ by Prop.\ \ref{prop:hpb}(3). On the other hand,  Prop.\ \ref{prop:hpb}(1) provides a weak equivalence $E \weq  X \times_{Z}P(f)$. Therefore, the exactness of $F$ yields a sequence of weak equivalences in $\cat{D}$
\[
F(E) \weq F\bigl(X \times_{Z}P(f)\bigr) \cong F(X) \times_{F(Z)} F(P(f)) \weq K.
\] 
\end{proof}

\subsection{Towers in categories of fibrant objects} \label{sec:towmodcat}
For a category $\cat{C}$, we denote by $\tow(\cat{C})$ the category of towers in
$\cat{C}$, i.e.\ diagrams in $\cat{C}$ of the form
\[
\cdots \to X_{n} \xto{p_{(n-1)}} X_{n-1} \xto{p_{(n-2)}} \cdots  \xto{p_{(1)}} X_1 \xto{p_{(0)}} X_0.
\]
A morphism between towers $X$ and $Y$ is a sequence of morphisms $\{f_i
\maps X_i \to Y_i \}_{i \geq 0}$ in $\cat{C}$ such that the obvious diagram
commutes. 
We will use the next proposition a few times in this paper, for the particular cases when $\cat{M}= \Ch_\proj$ and $\cat{M}=\sSet$.  

\begin{proposition} \label{prop:towmodcat}
Let $\cat{M}$ be a model category and $\catC \sse \cat{M}$ the full subcategory of fibrant objects of $\cat{M}$. Let  $f \maps X \to Y \in \tow(\catC)$ be a morphism between towers in $\catC$. 
\begin{enumerate}
\item \label{item:towmodcat1} Suppose for all $i \geq 0$,  % $X_i$, $Y_i$ are fibrant objects in $\catC$, and  
the maps $p^{X}_{(i)} \maps X_{i+1} \to X_{i}$ and $p^{Y}_{(i)} \maps Y_{i+1} \to Y_{i}$ are fibrations in $\catC$.
If each $f_i \maps X_i \weq Y_i$ is a weak equivalence in $\catC$, then the morphism 
$\plim f \maps \plim X \to \plim Y$ is also a weak equivalence in $\catC$.

\item \label{item:towmodcat2} 
If $f_0 \maps X_0 \fib Y_0$ is a (acyclic) fibration in $\catC$,
and the unique morphism $X_{n+1} \to X_n \times_{Y_n} Y_{n+1}$ 
in the following commutative diagram
\begin{equation} \label{diag:towmodcat}
\begin{tikzpicture}[operad style]
\matrix (m) [matrix of math nodes, row sep=2em,column sep=2em]
  {  
X_{n+1} \& [-1 cm] \& \\ [-0.5cm]
\&     X_n \times_{Y_n} Y_{n+1} \&  Y_{n+1} \\
\& X_n  \& Y_n \\
}; 
  \path[->,font=\scriptsize] 
  
  (m-1-1) edge node[auto] {$$} (m-2-2)
  (m-2-2) edge node[auto] {$$} (m-2-3)
  (m-2-2) edge node[auto,swap] {$$} (m-3-2)
  (m-2-3) edge node[auto] {$p^{Y}_{(n)}$} (m-3-3)
  (m-3-2) edge node[auto] {$f_n$} (m-3-3)
  (m-1-1) edge [bend left=20]  node[auto] {$f_{n+1}$} (m-2-3)
  (m-1-1) edge [bend right=40]  node[auto,swap] {$p^{X}_{(n)}$} (m-3-2)
  ;

%begin pullback symbol%
  \begin{scope}[shift=($(m-2-2)!.4!(m-3-3)$)]
    \draw +(-0.25,0) -- +(0,0)  -- +(0,0.25);
  \end{scope}
  % end pullback symbol%
\end{tikzpicture}
\end{equation}
is a (acyclic) fibration in $\catC$ for all $i \geq 0$,  then the morphism 
$\plim f \maps \plim X \to \plim Y$ is a (acyclic) fibration in $\catC$.

\end{enumerate}
\end{proposition}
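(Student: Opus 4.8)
The plan is to realize $\plim$ as a right Quillen functor for a suitable model structure on the category $\tow(\cat{M})$ of towers in $\cat{M}$, and then to read off both statements from the standard preservation properties of right Quillen functors. First I would endow $\tow(\cat{M}) = \cat{M}^{\omega^{\op}}$ with the Reedy model structure associated to the inverse Reedy category $\omega^{\op}$. Because $\omega^{\op}$ is a pure inverse category (it has no nontrivial degree-raising maps), all latching objects are initial and each matching object is given by the single predecessor: $M_n X = X_{n-1}$ for $n \geq 1$ and $M_0 X = \ast$. Consequently the Reedy weak equivalences are exactly the levelwise weak equivalences, and the Reedy (acyclic) fibrations are precisely the maps $f \maps X \to Y$ whose relative matching maps $X_n \to Y_n \times_{M_n Y} M_n X$ are (acyclic) fibrations in $\cat{M}$ for every $n$.

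Next I would match the hypotheses to these descriptions. Unwinding the matching object at level $n+1$, the relative matching map is exactly the canonical map $X_{n+1} \to X_n \times_{Y_n} Y_{n+1}$ appearing in diagram \eqref{diag:towmodcat}, while at level $0$ it is $f_0 \maps X_0 \to Y_0$ (since $M_0 Y = \ast$). Thus a tower $X$ all of whose structure maps $p^X_{(i+1)}$ are fibrations (with each $X_i$ fibrant) is exactly a Reedy fibrant object, and a map $f$ satisfying the hypotheses of part \eqref{item:towmodcat2} is exactly a Reedy (acyclic) fibration. It then remains to observe that $\plim \maps \cat{M}^{\omega^{\op}} \to \cat{M}$ is right Quillen: it is right adjoint to the constant-diagram functor $c$, and for a pure inverse category the relative latching maps of $c(A) \to c(B)$ collapse to $A \to B$ itself, so $c$ preserves cofibrations and acyclic cofibrations and is therefore left Quillen.

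With this in hand, part \eqref{item:towmodcat2} is immediate: a right Quillen functor preserves fibrations and acyclic fibrations, so $\plim f$ is an (acyclic) fibration; moreover $\plim X$ and $\plim Y$ are fibrant, hence lie in $\catC$, since right Quillen functors preserve fibrant objects. For part \eqref{item:towmodcat1}, both $X$ and $Y$ are Reedy fibrant by hypothesis and $f$ is a levelwise, hence Reedy, weak equivalence; Ken Brown's lemma then gives that the right Quillen functor $\plim$ sends $f$ to a weak equivalence $\plim f$, again in $\catC$. I expect the only real friction to be bookkeeping: carefully identifying the Reedy matching maps (and the off-by-one in the indexing) with the pullback condition in \eqref{diag:towmodcat}, and noting that although $\catC$ is merely the CFO of fibrant objects, all of these constructions may be carried out in the ambient model category $\cat{M}$, with the resulting limits landing back in $\catC$ precisely because they are fibrant.
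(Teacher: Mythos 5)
Your argument is correct, and it is essentially the paper's argument made explicit: the paper simply cites Prop.\ 2.5 and 2.6 of Chach\'olski--Scherer applied to $\cat{M}^{\op}$, and those results are exactly the statements you derive from the Reedy model structure on $\cat{M}^{\omega^{\op}}$ (matching objects $M_nX = X_{n-1}$, $\plim$ right Quillen, Ken Brown's lemma for part (1)). The only point worth flagging is that in part (2) the fibrancy of $\plim X$ and $\plim Y$ requires the towers themselves to be Reedy fibrant, which the stated hypotheses do not literally guarantee --- but this imprecision is in the proposition as stated, not in your proof, and holds in every application the paper makes.
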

\begin{proof}
Apply Prop.\ 2.5 and Prop.\ 2.6 of \cite{Chacolski-Scherer} to the model category $\cat{M}^{\op}$
\end{proof}

\section{Homotopy theory of complete cochain complexes} \label{sec:ch-htpy-thy}
Let $\FF$ be a field of arbitrary characteristic. We begin by recalling some basic facts concerning filtered and complete filtered  vector spaces over $\FF$. We refer the reader to \cite[Ch.\ 2]{DSV:2018} and \cite[Sec.\ 7.3.1 -- 7.3.4]{Fresse:BookI} for additional background material.

\subsection{Complete filtered vector spaces} \label{sec:filtvect}
We denote by $\fVect$ the additive category whose objects are 
$\Z$-graded $\FF$-vector spaces $V$  equipped with a decreasing
filtration of subspaces beginning in filtration degree 1:
\[
V= \cF_1V \sss \cF_2V \sss \cF_3V \sss \cdots 
\]
Morphisms in $\fVect$ are degree 0 linear maps $f \maps V \to V'$ that are compatible with the filtrations:
\[
f \bigl( \cF_{n}V \bigr) \sse \cF_n V' \quad \forall n \geq 1.
\]

\subsubsection{Notation for filtered vector spaces}\label{sec:fVect-note}
The following notation will be used regularly throughout the paper. Let $V \in \fVect$.
For each $n \geq 1$, we have the canonical surjections and quotient maps
\[
p_{\pp{n}} \maps V \to \qf{V}{n} \quad \bp_{\pp{n}} \maps \qf{V}{n+1} \to \qf{V}{n}
\]
such that the diagram
\[
\begin{tikzpicture}[descr/.style={fill=white,inner sep=2.5pt},baseline=(current  bounding  box.center)]
\matrix (m) [matrix of math nodes, row sep=2em,column sep=2em,
  ampersand replacement=\&]
  {  
 \& V \&  \\
V/\cF_{n+1}V \& \& V/\cF_{n}V \\
}
; 
  \path[->,font=\scriptsize] 
   (m-1-2) edge node[auto,swap] {$p_{(n+1)}$} (m-2-1)
   (m-1-2) edge node[auto] {$p_{(n)}$} (m-2-3)
  (m-2-1) edge node[auto] {$\bp_{(n)}$} (m-2-3) 
;
\end{tikzpicture}
\]
commutes. We denote by
\[
\hp_{(n)} \maps \plimV \to V/\cF_nV \qquad \forall n \geq 1
\]
the canonical map out of the projective limit, and we let
\[
p_V \maps V \to \plimV
\]
denote the unique map satisfying $\hp_{(n)} \circ p_V =p_{(n)}$ for all $n\geq 1$.

Given a morphism $f \maps V \to V' \in \fVect$, we denote its restriction to each piece of the filtration as 
\[
\cF_n f:= f \vert_{\cF_n V} \maps \cF_{n}V  \to \cF_n V'.
\]
For each $n \geq 1$, the compatibility of $f$ with the filtrations gives two commuting diagrams of short exact sequences in $\Vect$ which appear repeatedly throughout the paper:
\begin{equation}\label{diag:ses1}
\begin{tikzdiag}{2}{2} 
{
0 \& \cF_nV \& V  \& \qf{V}{n} \& 0 \\
0 \& \cF_nV' \& V'  \& \qf{V'}{n} \& 0 \\
}; 
\path[->,font=\scriptsize] 
(m-1-2) edge  node[auto,swap] {$\cF_n f$} (m-2-2) edge  node[auto] {$$}(m-1-3)
(m-1-3) edge  node[auto] {$p_{\pp{n}}$} (m-1-4)
(m-1-3) edge  node[auto,swap] {$f$} (m-2-3)
(m-1-4) edge  node[auto] {$\qq{f}{n}$} (m-2-4)
(m-2-3) edge  node[auto] {$p'_{\pp{n}}$} (m-2-4)
(m-2-2) edge  node[auto] {$$} (m-2-3)
(m-1-1) edge node[auto,swap] {$$} (m-1-2)
(m-2-1) edge node[auto,swap] {$$} (m-2-2)
(m-1-4) edge node[auto,swap] {$$} (m-1-5)
(m-2-4) edge node[auto,swap] {$$} (m-2-5)
;
\end{tikzdiag}\tag{D1}
\end{equation}
and

%%%
\begin{equation}\label{diag:ses2}
\begin{tikzdiag}{2}{2} 
{  
0 \& \cF_nV/\cF_{n+1}V \& V/\cF_{n+1}V \& \qf{V}{n}\& 0 \\
0 \& \cF_nV'/\cF_{n+1}V' \& V'/\cF_{n+1}V'  \& \qf{V'}{n} \& 0 \\
}; 
\path[->,font=\scriptsize] 
(m-1-1) edge node[auto,swap] {$$} (m-1-2)
(m-2-1) edge node[auto,swap] {$$} (m-2-2)
(m-1-4) edge node[auto,swap] {$$} (m-1-5)
(m-2-4) edge node[auto,swap] {$$} (m-2-5)
(m-1-3) edge  node[auto] {$\bp_{(n)}$} (m-1-4)
(m-2-3) edge  node[auto] {$\bp'_{(n)}$} (m-2-4)
(m-1-2) edge node[auto,swap] {$$} (m-1-3)
(m-2-2) edge node[auto,swap] {$$} (m-2-3)

(m-1-2) edge node[auto,swap] {$\cF_{n}\qq{f}{n+1}$} (m-2-2)
(m-1-3) edge node[auto,swap] {$\qq{f}{n+1}$} (m-2-3)
(m-1-4) edge node[auto,swap] {$\qq{f}{n}$} (m-2-4)
;
\end{tikzdiag} \tag{D2}
\end{equation}
In the above diagrams, the morphisms $\qq{f}{n+1}$, $\qq{f}{n}$, and $\cF_{n}\qq{f}{n+1}$ are the obvious ones induced by $f$ and the universal property for quotients.

\begin{remark}\label{rmk:filt-iso}
The commutativity of diagram \eqref{diag:ses1} implies that a morphism between filtered vector spaces $f \maps V \to V'$ is an isomorphism in $\fVect$ if and only if $f$ is an isomorphism in $\Vect$ and 
$\qq{f}{n}$ is an isomorphism in $\Vect$ for each $n\geq 1$.   
\end{remark}

\subsubsection{Completion and towers} \label{sec:comp}
We next record notation and conventions for completions of filtered vector spaces.
Given a filtered vector space $V \in \fVect$, the projective limit $\hV:=\plimV$ is also an object in $\fVect$ when equipped with filtration given by the subspaces 
\[
\cF_n\hV := \ker \bigl ( \hp_{(n)} \maps \hV \to \qf{V}{n} \bigr).
\]
The space $\hV$ is complete with respect to the topology induced by this filtration, and the universal map $p_V \maps V \to \hV$  extends to a filtered morphism in $\fVect$.
Throughout we represent elements of $\hV$ as coherent sequences in $V$ or, equivalently, as convergent series in $V.$ See \cite[Lemma 2.14]{DSV:2018} for further details. We denote by  
\[
\cVect \sse \fVect
\]
the full subcategory of \df{complete filtered vector spaces} whose objects are those filtered vector spaces $V \in \fVect$ such that $p_V \maps V \xto{\cong} \hV$
is an isomorphism in $\fVect$.

We denote by $\ttow \maps \fVect \to \tVect$ the functor which assigns to a filtered vector space $V$ the tower
\begin{equation} \label{eq:towfunc}
\ttow(V):= \quad \cdots \to V/\cF_{n+1} V \xto{\bp_{(n)}} V/\cF_n V \xto{\bp_{(n-1)}} 
\cdots \xto{\bp_{(2)}} V / \cF_2V \to 0 \to 0 
\end{equation}
We obtain the completion of a filtered vector space $V \in \fVect$ as a composition of functors
\begin{equation*} %\label{eq:comp-func}
\begin{split}
\plim \ttow \maps & \fVect \to \cVect, \qquad
V \xto{f} V' \quad \mapsto \quad \wh{V} \xto{\wh{f}} \wh{V'}\\
\end{split}
\end{equation*}
where $\wh{V}:=\plim_n \qf{V}{n}$ and $\wh{f}:=\plim_n \qq{f}{n}$. Note that if $f \maps V \to V'$ is a morphism of filtered vector spaces, then the diagram in $\fVect$ 
\[
\begin{tikzpicture}[descr/.style={fill=white,inner sep=2.5pt},baseline=(current  bounding  box.center)]
\matrix (m) [matrix of math nodes, row sep=2em,column sep=2em,
  ampersand replacement=\&]
  {  
V \& V' \\
\wh{V} \& \wh{V'} \\ 
}; 
  \path[->,font=\scriptsize] 
   (m-1-1) edge  node[auto] {$f$} (m-1-2)
   (m-1-1) edge  node[auto,swap] {$p_V$} (m-2-1)
   (m-1-2) edge  node[auto] {$p_{V'}$} (m-2-2)
   (m-2-1) edge  node[auto] {$\wh{f}$} (m-2-2)
  ;
\end{tikzpicture}
\]
commutes. This gives us the following basic result, which we record for later reference.

\begin{lemma} \label{lem:fvect}
For each $V, V' \in \cVect$, there is a natural isomorphism of abelian groups 
\begin{equation*} %\label{eq:comp_hom}
  \hom_{\tVect}\bigl(\ttow(V), \ttow(V') \bigr) \cong \hom_{\cVect}(V,V').  
 \end{equation*}
\end{lemma}

The next lemma will be crucial for analyzing fibrations in Sec.\ \ref{sec:Ch_cfo} and \ref{sec:Linf_cfo}.

\begin{lemma} \label{lem:split}
If $f \maps V \to V'$ is a morphism in $\cVect$ such that the restriction
$\cF_nf \maps \cF_nV \to \cF_nV'$ is surjective  for all $n \geq 1$, then there exists a morphism $\si \maps V' \to V$ in $\cVect$ such that $f \si = \id_{V'}$.
\end{lemma}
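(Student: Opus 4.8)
The plan is to realize $\si$ as the projective limit of a compatible family of sections of the induced maps on the quotient tower. Concretely, I will construct by induction on $n$ linear maps $\si_{\pp{n}} \maps \qf{V'}{n} \to \qf{V}{n}$ satisfying $\qq{f}{n} \si_{\pp{n}} = \id$ and the tower-compatibility $\bp_{(n)} \si_{\pp{n+1}} = \si_{\pp{n}} \bp'_{(n)}$ for all $n$. Such a family is precisely a morphism of towers $\ttow(V') \to \ttow(V)$, so since $V$ and $V'$ are complete, Lemma \ref{lem:fvect} (the isomorphism \eqref{eq:comp_hom}) produces from it a morphism $\si \maps V' \to V$ in $\cVect$; the levelwise identities $\qq{f}{n}\si_{\pp{n}} = \id$ then pass to the limit to give $f \si = \id_{V'}$, and the compatibility of $\si$ with the projections $p_{\pp{n}}$ forces $\si(\cF_nV') = \si(\ker p'_{\pp{n}}) \sse \ker p_{\pp{n}} = \cF_nV$, so that $\si$ is automatically filtration-preserving as required.

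The first step is to observe that surjectivity of each $\cF_nf$ descends to the associated graded: given a class in $\cF_nV'/\cF_{n+1}V'$, I lift it to $\cF_nV'$, choose a preimage under the surjection $\cF_nf$, and project back to $\cF_nV/\cF_{n+1}V$. Hence the induced map $\cF_n\qq{f}{n+1} \maps \cF_nV/\cF_{n+1}V \to \cF_nV'/\cF_{n+1}V'$ appearing in diagram \eqref{diag:ses2} is surjective for every $n$. Because $\kk$ is a field, I then fix once and for all a linear section $s_n$ of each $\cF_n\qq{f}{n+1}$, together with linear splittings $t$ and $t'$ of the surjections $\bp_{(n)}$ and $\bp'_{(n)}$ in the two rows of \eqref{diag:ses2}.

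For the induction, the base case $n=1$ is vacuous since $\qf{V}{1} = \qf{V'}{1} = 0$. Assuming $\si_{\pp{n}}$ is built, I decompose an arbitrary $\bar y \in \qf{V'}{n+1}$ via $t'$ into its image $\bp'_{(n)}\bar y$ in $\qf{V'}{n}$ and a graded remainder $\bar y - t'\bp'_{(n)}\bar y \in \cF_nV'/\cF_{n+1}V'$. Applying $t$ to $\si_{\pp{n}}(\bp'_{(n)}\bar y)$ yields a lift of the base part; chasing \eqref{diag:ses2} and using $\qq{f}{n}\si_{\pp{n}} = \id$ shows that its image under $\qq{f}{n+1}$ differs from $t'\bp'_{(n)}\bar y$ only by a determined element of $\cF_nV'/\cF_{n+1}V'$ depending linearly on $\bar y$, which I cancel while also accounting for the graded remainder by adding the appropriate value of $s_n$. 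The resulting $\si_{\pp{n+1}}$ is linear, satisfies $\bp_{(n)}\si_{\pp{n+1}} = \si_{\pp{n}}\bp'_{(n)}$ because the $s_n$-term lies in $\ker \bp_{(n)}$, and satisfies $\qq{f}{n+1}\si_{\pp{n+1}} = \id$ by the way the correction was chosen. The one delicate point -- and the real content of the argument -- is this inductive step: the lift must be at once a genuine right inverse of $\qq{f}{n+1}$ and strictly compatible with the tower map $\bp_{(n)}$, and it must depend linearly on $\bar y$. The explicit correction term assembled from the splitting of the short exact sequence in \eqref{diag:ses2} and the graded section $s_n$ is exactly what reconciles these requirements; no obstruction arises precisely because we work over a field, so that all the relevant short exact sequences split and each $\cF_n\qq{f}{n+1}$ admits a section.
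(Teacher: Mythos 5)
Your proposal is correct and follows essentially the same route as the paper's proof: an inductive construction of compatible sections $\si_{(n)}$ of the quotient maps $\qq{f}{n}$, with the naive lift built from a splitting of $\bp_{(n)}$ corrected by a section of the surjection $\cF_n\qq{f}{n+1}$ so that both the right-inverse identity and tower compatibility hold, followed by passage to the limit via Lemma \ref{lem:fvect}. The only difference is cosmetic — you phrase the correction through the decomposition of $\bar y$ by the splitting $t'$, whereas the paper directly subtracts a section applied to the error map $\tha = \qq{f}{n+1}\ti{\si}_{n+1} - \id$ — and you helpfully make explicit the surjectivity of $\cF_n\qq{f}{n+1}$, which the paper uses without comment.
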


\begin{proof}
In the notation of diagram \eqref{diag:ses2}, 
we will inductively construct linear maps $\si_n \maps \qf{V'}{n} \to \qf{V}{n}$ for each $n\geq 1$ such that $\qq{f}{n} \si_n = \id_{\qf{V'}{n}}$, and such that for all $n\geq 2$  
\begin{equation} \label{eq:split}
\si_{n-1} \qpr{n-1} = \qq{\bp}{n-1} \si_n
\end{equation}
This will give a morphism between towers $ \tow(V') \to \tow(V)$, 
and then Lemma \ref{lem:fvect} will provide the desired right inverse $\si = \plim \si_n$.
 
Set $\si_1=0$, and let $\si_2 \maps \qf{V'}{2} \to \qf{V}{2}$ be a section of the surjective linear map $\qq{f}{2}$. Since $\qf{V}{1}=\qf{V'}{1}=0$, Eq.\ \ref{eq:split} is satisfied trivially. This establishes the base case.

Let $n \geq 2$ and suppose  $\si_{n} \maps \qf{V'}{{n}} \to \qf{V}{n}$ 
is a section of $\qq{f}{n}$ satisfying Eq.\ \ref{eq:split}. Consider the commutative diagram \eqref{diag:ses2}. Choose a linear map $s \maps \qf{V}{n} \to \qf{V}{n+1}$ such that $\bp_{(n)} s =\id_{\qf{V}{n}}$, and  let $\ti{\si}_{n+1} \maps \qf{V'}{n+1} \to \qf{V}{n+1}$ be 
\[
\ti{\si}_{n+1}:= s \cc \si_n \cc \qq{\bp'}{n}.
\] 
Then the image of the map $\tha \maps \qf{V'}{n+1} \to \qf{V'}{n+1}$ defined as 
\[
\tha:= f_{n+1}\ti{\si}_{n+1} - \id_{\qf{V'}{n+1}}
\]
lies in $\ker \qq{\bp'}{n}= \cF_nV'/\cF_{n+1}V'$. Finally, let $\ta \maps \cF_nV'/\cF_{n+1}V' \to \cF_nV/\cF_{n+1}V$ be a section of the surjective map $\cF_n\qq{f}{n+1}$, and 
define
\[
\si_{n+1} \maps \qf{V'}{n+1} \to \qf{V}{n+1}, \quad \si_{n+1}:= \ti{\si}_{n+1} - \ta \tha. 
\]
A direct calculation then shows that $\qq{f}{n+1} \si_{n+1} =\id_{\qf{V'}{n+1}}$, and 
$\si_{n}  \qq{\bp'}{n} = \qq{\bp}{n} \si_{n+1}$.
\end{proof}

We conclude this subsection by recalling a basic fact about kernels of morphisms in the additive category $\cVect$.

\begin{lemma} \label{lem:ker}
Let $f \maps V \to W$ be a morphism in $\cVect$. Let $\ker f \in \Vect$ be the usual kernel of the underlying linear map. Then $\ker f$, equipped with the induced filtration $\cF_n \ker f:= \cF_nV \cap \ker f$ is a complete filtered vector space, and is the kernel object of the map $f \maps V \to W$ in the category $\cVect$.
\end{lemma}
\begin{proof}
See \cite[Sec.\ 7.3.4(b)]{Fresse:BookI}.
\end{proof}

%\paragraph{Pullbacks in $\cVect$}

\subsubsection{The completed tensor product in $\cVect$} \label{sec:ctensor}
% We recall a special case of the tensor product $\ctensor$ in $\cVect$.
Let $V \in \fVect$ be a filtered vector space and $A \in \Vect$ a non-filtered graded vector space.  
The filtration on $V$ induces a canonical filtration on the tensor product $V \tensor A$
given by the subspaces  $\cF_n(V \tensor A) := \cF_nV \tensor A$.
% \begin{equation} \label{eq:tensorfilt}
% \begin{split}
% V \tensor A = \cF_1(V \tensor A) &\sss \cF_2(V \tensor A) \sss \cF_3(V \tensor A) \sss \cdots \\
% \end{split}
% \end{equation}
Since in our case, $-\tensor_\FF A \maps \Vect \to \Vect$ is an exact functor, there are natural identifications for each $n \geq 1$
\[
V \tensor A /\cF_n(V \tensor A ) \cong (V/\cF_nV) \tensor A.
\]
Therefore, the maps $\bp_{(k)} \maps \qf{V}{k+1} \to \qf{V}{k}$ induce the inverse system of surjections
\[
\bp_{(k)} \tensor \id_A  \maps \qf{V}{k+1} \tensor A \to \qf{V}{k} \tensor A.
\]
We define the \df{completed tensor product} of $V$ and $A$ as 
\[
V \ctensor A : = \plim \ttow(V \tensor A)=\plim_{n} \bigl (V/\cF_nV ~  \tensor A \big).
\] 
If $f \maps A \to A'$ is a linear map in $\Vect$, then the morphism 
$\id_V \tensor f \maps V \tensor A \to V \tensor A'$ is compatible with the respective filtrations. Hence, for a fixed $V \in \cVect$ we obtain a functor 
\begin{equation} \label{eq:ctensor-func}
\begin{split}
V \ctensor - \maps &\Vect \to \cVect, \qquad f \maps A \to A' ~ \mapsto ~  \id_V \ctensor f \maps V \ctensor A \to V \ctensor A', 
% V \ctensor - \maps &\Vect \to \cVect\\
% f \maps A \to A' \quad &\mapsto \quad  \id_V \ctensor f \maps V \ctensor A \to V \ctensor A', \\
\end{split}
\end{equation}
where $\id_V \ctensor f := \plim \id_{\qf{V}{n}} \tensor f$.

\subsection{Complete filtered cochain complexes}  \label{sec:cCh}
We denote by  $\cCh$ the category of complete filtered cochain complexes over $\FF$. An object $(V,d)$ of $\cCh$
consists of a complete vector space $V \in \cVect$ equipped with a degree $1$ differential $d \maps V \to V$ such that each piece of the filtration on $V$ is a sub-cochain complex $(\cF_nV, d)$. Morphisms in $\cCh$ are those morphisms in $\cVect$ which are compatible with the differential. 

\begin{notation} \label{note:cCh}
Given $(V,d) \in \cCh$, for each $n \geq 1$ we have the following short exact sequences in $\Ch$
\[
\begin{split}
0 \to (\cF_nV,d) \to &(V,d)  \to (\qf{V}{n},\qq{d}{n}) \to 0,\\
\end{split}
\] 
\[
\begin{split}
0 \to \bigl(\cF_nV/\cF_{n+1}V, \qq{d}{n+1} \bigr) \to & \bigl(V/\cF_{n+1}V, \qq{d}{n+1} \bigr)  \to \bigl(\qf{V}{n}, \qq{d}{n} \bigr) \to 0.
\end{split}
\] 
Above, the differentials $\qq{d}{n}$ and $\qq{d}{n+1}$ are the usual ones on the quotient complexes.
Given a morphism $f \maps (V,d) \to (V',d')$ in $\cCh$, the diagrams \eqref{diag:ses1} and \eqref{diag:ses2} lift to the category $\Ch$ in the obvious way.
\end{notation}

Note that the category $\cCh$ has finite products. In particular, if $(V,d_V)$, $(W,d_W) \in \cCh$, then the usual product of complexes $(V \times W,d_{\times})$ is complete with respect to the filtration given by the subcomplexes $\cF_n(V \times W):= \cF_n V \times \cF_nW$. 

Let us return briefly to the tensor product from Sec.\ \ref{sec:ctensor}. Given a complete filtered cochain complex $(V,d) \in \cCh$ and a unfiltered complex $(A,\del) \in \Ch$, let
$(V \tensor A, d_A)$ denote the usual tensor product of complexes. The functor \eqref{eq:ctensor-func} extends to cochain complexes $V \ctensor - \maps \Ch \to \cCh$,
% \[
% \begin{split}
% V \ctensor - \maps &\Ch \to \cCh\\
% \end{split}
% \]
and sends $f \maps (A,\del) \to (A',\del')$ in $\Ch$ to the filtered chain map  $\id_V \ctensor f \maps (V \ctensor A, d_A) \to (V \ctensor A', d_{A'})$.

\subsection{$\cCh$ as a category of fibrant objects} \label{sec:Ch_cfo}
In this section, we first give a fairly explicit proof that $\cCh$ forms a category of fibrant objects.
Then we show that this CFO structure satisfies the additional axioms \eqref{EA1} and \eqref{EA2} from Sec.\ \ref{sec:acyc}. We will use these results later in Sec.\ \ref{sec:hmpty-Linf} to construct the CFO structure on  complete shifted $L_\infty$-algebras.

\begin{definition}\label{def:chain-weq}
Let $f \maps (V,d) \to (V',d')$ be a morphism in $\cCh$. We say $f$ is a \df{weak equivalence} if for all $n \geq 1$ the restrictions
\[
\cF_nf \maps (\cF_{n}V, d) \to (\cF_n V', d')  
\]
are quasi-isomorphisms of cochain complexes. We say $f$ is a \df{fibration} if for all $n \geq 1$ the restrictions $\cF_nf$
% \[
% \cF_n f \maps (\cF_{n}V, d) \to (\cF_n V', d')  
% \]
are degree-wise surjective maps of cochain complexes.
\end{definition}
\begin{remark}\label{rmk:cCh}
\mbox{}
\begin{enumerate}
\item If $f \maps (V,d) \weq (V',d')$ is a weak equivalence in $\cCh$, then
the long exact sequence in cohomology applied to diagram \eqref{diag:ses1} implies that
\[
\qq{f}{n} \maps  (\qf{V}{n},\qq{d}{n}) \weq (\qf{V'}{n},\qq{d'}{n})
\]
is a quasi-isomorphism for each $n \geq 1$. By applying the same argument to diagram \eqref{diag:ses2}, we then conclude that 
\[
\cF_n \qq{f}{n+1} \maps (\cF_nV/\cF_{n+1}V, \qq{d}{n}) \weq (\cF_nV'/\cF_{n+1}V', \qq{d'}{n})
\]
is a quasi-isomorphism for each $n \geq 1$, as well.

\item If $f \maps (V,d) \fib (V',d')$ is a fibration, then the surjectivity of the restriction $\cF_nf$ implies that the inclusion $\qf{\ker f}{n} ~ \emb ~  \ker \qq{f}{n}$ yields an isomorphism
$\qf{\ker f}{n} \cong \ker \qq{f}{n}$ for each $n \geq 1$.

\end{enumerate}

\end{remark}

\begin{theorem}\label{thm:Ch_cfo}
\mbox{}
The category $\cCh$ of complete filtered cochain complexes 
equipped with the weak equivalences and fibrations introduced in Def.\ \ref{def:chain-weq} is a category of fibrant objects with a functorial path object.
\end{theorem}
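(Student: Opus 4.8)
The plan is to verify the seven axioms of Definition \ref{def:cfo} together with the functoriality of the path object, checking each condition ``one filtration piece at a time'' so as to reduce to the known projective model structure on $\Ch$. The terminal object is the zero complex and finite products are the direct sums described in the text, so the axioms demanding a cartesian structure are immediate. Axioms \eqref{item:cfoax1}, \eqref{item:cfoax2}, \eqref{item:cfoax3}, and \eqref{item:cfoax7} are then formal: an isomorphism $f$ in $\cCh$ restricts (by Remark \ref{rmk:filt-iso} and the five lemma applied to \eqref{diag:ses1}) to isomorphisms $\cF_n f$, hence to acyclic fibrations; ``2 out of 3'' for weak equivalences, closure of fibrations under composition, and the fibrancy of every object all hold because quasi-isomorphisms and degreewise surjections in $\Ch$ enjoy the analogous properties, while the defining conditions of Definition \ref{def:chain-weq} are imposed levelwise on the $\cF_n$.

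The substantive work is axioms \eqref{item:cfoax4} and \eqref{item:cfoax5}. Given $Y \xto{g} Z \xleftarrow{f} X$ with $f$ a fibration, I would form the complex-level pullback $P = X \times_Z Y$ with filtration $\cF_n P := \cF_n X \times_{\cF_n Z} \cF_n Y$. The crucial step is to show that the fibration hypothesis forces the filtration quotients to commute with the pullback, i.e.\ $\qf{P}{n} \cong (\qf{X}{n}) \times_{\qf{Z}{n}} (\qf{Y}{n})$: surjectivity of the comparison map is a short diagram chase using that $\cF_n f$ is onto. Since $\plim$ commutes with pullbacks, this identification gives $\plim \ttow(P) \cong X \times_Z Y = P$, so that $P$ is complete and is genuinely the pullback in $\cCh$ (cf.\ Lemma \ref{lem:fvect}). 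The projection $\pr_Y \maps P \to Y$ restricts on each $\cF_n$ to the base change of $\cF_n f$ along $\cF_n g$ in $\Ch_\proj$; since $\Ch_\proj$ is a model category, this base change is a fibration, and an acyclic fibration when $\cF_n f$ is, giving both \eqref{item:cfoax4} and \eqref{item:cfoax5}. Alternatively one can package the passage to the limit through Proposition \ref{prop:towmodcat}\eqref{item:towmodcat2}.

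For the path object (axiom \eqref{item:cfoax6}) I would take $\Path(V) := V \ctensor \Om_1$, where $\Om_1 = \kk[t,dt]$ is the polynomial de Rham complex of $\Delta^1$, with $s \maps V \to V \ctensor \Om_1$ the inclusion of constants and $(d_0,d_1)$ evaluation at the two endpoints. That $(d_0,d_1)$ is a fibration follows from an explicit section on each $\cF_n$ piece, sending $(a,b)$ to $a \otimes 1 + (b-a)\otimes t$; the composite $(d_0,d_1)\circ s$ is the diagonal by inspection. The only delicate point is that $s$ is a weak equivalence: on each filtration piece this is the completed tensor of $\cF_n V$ with the contractible complex $\Om_1$, and since the relevant tower has surjective structure maps, the levelwise quasi-isomorphisms assemble to a quasi-isomorphism after passing to the limit, which is precisely Proposition \ref{prop:towmodcat}\eqref{item:towmodcat1} applied to $\Ch_\proj$. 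Functoriality is clear, since $f \maps V \to V'$ induces $f \ctensor \id_{\Om_1}$ making the required square commute; thus the assignment in fact satisfies the stronger axiom \eqref{EA2}.

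I expect the main obstacle to be the pullback construction in axioms \eqref{item:cfoax4}--\eqref{item:cfoax5}: one must simultaneously show that the naive complex-level pullback is already complete and that its filtration behaves correctly, and both rely essentially on the surjectivity encoded in the fibration hypothesis (the same mechanism underlying Lemma \ref{lem:split}). Everything else reduces cleanly to levelwise statements in $\Ch_\proj$ and to the tower formalism of Section \ref{sec:towmodcat}.
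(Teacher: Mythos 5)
Your proposal is correct and follows the same overall skeleton as the paper: the formal axioms are dispatched levelwise, the path object is $V \ctensor \Om_1$ with $(d_0,d_1)$ split by an explicit filtered section and $s$ shown to be a weak equivalence by passing levelwise quasi-isomorphisms through the tower formalism of Prop.\ \ref{prop:towmodcat}, exactly as in Lemma \ref{lem:ctensor}. The one place you genuinely diverge is the pullback step. The paper first uses Lemma \ref{lem:split} to produce a filtered section $\si$ of the fibration $f$ and then exhibits an explicit filtered isomorphism $P \cong W \oplus \ker f$, from which surjectivity of $\cF_n \pr_W$ is immediate and acyclicity follows from the long exact sequence of $\cF_n\ker f \emb \cF_nW \oplus \cF_n\ker f \fib \cF_nW$. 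You instead show (using surjectivity of $\cF_n f$) that $\qf{P}{n} \cong \qf{X}{n} \times_{\qf{Z}{n}} \qf{Y}{n}$, deduce completeness of $P$ from the commutation of $\plim$ with pullbacks, and then invoke stability of (acyclic) fibrations under base change in $\Ch_\proj$ applied to $\cF_nP = \cF_nX \times_{\cF_nZ}\cF_nY \to \cF_nY$. Both arguments are valid; yours is arguably more economical for $\cCh$ alone, while the paper's explicit splitting $P \cong W \oplus \ker f$ is the form that gets reused in Prop.\ \ref{prop:strict-pb} to lift the pullback construction to $\cLie$, which is why the author sets it up that way here. Two cosmetic points: $\Om_1$ is not contractible (it is quasi-isomorphic to $\kk$, not to $0$), and the identification of $\cF_n(V \ctensor \Om_1)$ with the completed tensor of $\cF_nV$ deserves a line (the paper sidesteps this by running the five lemma on the short exact sequence $0 \to \cF_n(V\ctensor A) \to V\ctensor A \to \qf{V}{n}\tensor A \to 0$ instead); neither affects the correctness of your argument.
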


\begin{proof}
It is easy to see that axioms 1,2,3, and 7 of Def.\ \ref{def:cfo} are satisfied.
We verify the remaining axioms in the following sections.
\end{proof}

\subsubsection{Pullbacks of (acyclic) fibrations in $\cCh$ (axioms 4 \& 5)}
The following proposition is straightforward, but we give an explicit proof following \cite[Thm.\ 4.1]{Vallette:2014} in preparation for the analogous result in Prop.\ \ref{prop:strict-pb}. 

\begin{proposition} \label{prop:Ch-pb}
The pullback of a fibration (resp.\ acyclic fibration) exists in $\cCh$, and is a fibration (resp.\ acyclic fibration).
\end{proposition}

\begin{proof}
% \mbox{}
% \begin{enumerate}[leftmargin=15pt]
% \item 
Consider a diagram of the form $(W,d_W) \xto{g} (U,d_U) \xleftarrow{f} (V,d_V)$, where $f$ is a fibration in $\cCh$. Since $\cCh$ is an additive category, the pullback object of $f$ along $g$ is a kernel object. Hence Lemma \ref{lem:ker} implies that the pullback object is the usual pullback $P \sse W \times V$ in $\Vect$ equipped with the subspace filtration $\cF_n P := P \cap (\cF_nW \times \cF_n V)$. Since $f$ is a fibration, Lemma \ref{lem:split} implies that there exists a filtered linear map $\si \maps U \to V$ in $\cVect$ such that $f \si = \id_{U}$. Let $\ker f \in \cVect$ denote the kernel of $f$ as in Lemma \ref{lem:ker}. The linear map
\[
h \maps W \times \ker f \to P, \quad h(w,x):=(w, \si g(w) + x)
\] 
is a filtration preserving isomorphism in $\cVect$ with filtered inverse
\[
j \maps P \to W \times \ker f, \quad j(w,v):= \bigl(w, v-\si g(w) \bigr).
\]
It then follows that 
\begin{equation} \label{diag:filt-pb}
\begin{tikzpicture}[descr/.style={fill=white,inner sep=2.5pt},baseline=(current  bounding  box.center)]
\matrix (m) [matrix of math nodes, row sep=2em,column sep=3em,
  ampersand replacement=\&]
  {  
\bigl( W \times \ker f, \ti{d} \bigr) \& (V, d_V) \\
(W,d_W) \& (U,d_U) \\
}
; 
  \path[->,font=\scriptsize] 
   (m-1-1) edge node[auto] {$\pr_V h$} (m-1-2)
   (m-1-1) edge node[auto,swap] {$\pr_Wh$} (m-2-1)
   (m-1-2) edge node[auto] {$f$} (m-2-2)
   (m-2-1) edge node[auto] {$g$} (m-2-2)
  ;

%begin pullback symbol%
  \begin{scope}[shift=($(m-1-1)!.4!(m-2-2)$)]
  \draw +(-0.25,0) -- +(0,0)  -- +(0,0.25);
  \end{scope}
  %end pullback symbol%
\end{tikzpicture}
\end{equation}
is a pullback diagram in $\cCh$, where $\ti{d}:= j\circ(d_W \times d_V) \cc h$, and clearly
\[
\cF_n (\pr_W h) = \pr_W h \vert_{\cF_n(W \times \ker f)} \maps \cF_n W \times \cF_n (\ker f) \to \cF_n{W} 
\]
is a surjection for all $n \geq 1$. Hence, $\pr_W h$ is a fibration.

%\item 
Now suppose that $f \maps (V,d_V) \afib (U,d_U)$ is an acyclic fibration.   
Clearly, $\pr_W h$ in the pullback diagram \eqref{diag:filt-pb} is a surjective quasi-isomorphism. Indeed, as a chain map $f$ is an acyclic fibration in the model category $\Ch_\proj$.
Let $n \geq 2$. By hypothesis, $\cF_n f$ is a surjective quasi-isomorphism. Hence, the complex
$\bigl(\ker \cF_nf, d_V \bigr)$ is acyclic. It follows from the definition of the differential $\ti{d}$ that we have a short exact sequence of cochain complexes
\[
\bigl(\cF_n \ker f, d_V \bigr) \emb \bigl( \cF_n W \times \cF_n \ker f, \ti{d} \bigr) \xto{\pr _{W}h} \bigl(\cF_n W, d_W \bigr).
\]
Therefore, since $\cF_n \ker f = \ker \cF_nf$, we deduce from the long exact sequence in cohomology that $\pr _{W}h \vert_{\cF_n (W \times \ker f)}$ is a quasi-isomorphism.
%\end{enumerate}
\end{proof}

\newcommand{\unit}{\boldsymbol{1}}
\newcommand{\mult}{\boldsymbol{\cdotp}}
\subsubsection{Functorial path object for $\cCh$ (axiom 6)}
The existence of a functorial path object stems from a suitable algebraic model for the unit interval.
For our purposes, the following ad-hoc definition is sufficient.
\begin{definition} \label{def:interval} 
A unital dg associative $\FF$-algebra $\J:=(\J,\del, \mult,\unit)$ is a \df{model for the interval} if there exist unital dg algebra morphisms $\ev_0,\ev_1 \maps \J \to \FF$ such that
\begin{enumerate}
\item The composition $\FF \xto{\unit} \J \xto{\ro} \FF \times \FF$ is the diagonal, where
$\ro:=(\ev_0,\ev_1)$.
\item As cochain maps, $\rho$ is an epimorphism, and both $\ev_0$ and $\ev_1$ are quasi-isomorphisms. 
% \item Given two maps between cochain complexes $f, g \maps (V,d) \to (V,d')$, and a~chain homotopy % $h \maps A \to \da B$
%   between them, there exists a corresponding chain map $\ti{h} \maps V \to W \tensor_\kk \J$ such that $(\id \tensor \ev_0) \cc \ti{h} = f$ and $(\id \tensor \ev_1) \cc \ti{h} =g$.
\end{enumerate}  
\end{definition}
Note that the axioms above imply that the unit $\unit$ is a quasi-isomorphism in $\Ch$, and that 
the ``evaluation maps'' $\ev_0$, $\ev_1$ are epimorphisms.
We recall two examples. 
\begin{enumerate}
\item Let $N^{\ast}(\Del^1):= (N^\ast(\Del^1),\del, \cupp, \unit)$ be the normalized cochain algebra 
on the standard 1-simplex with coefficients in $\FF$. As a graded vector space, $N^1(\Del^1):= \FF \vphi_{[1]}$, and  $N^0(\Del^1):= \FF \vphi_0 \times \FF \vphi_1$. The differential is $\del \vphi_0 := \vphi_{[1]}$, and  $\del \vphi_1 := -\vphi_{[1]}$; the multiplication is the usual cup product, and the unit is
$\unit:= \vphi_0 + \vphi_1$. The algebra morphisms $\ev_0, \ev_1$  are defined on degree 0 generators as  $\ev_0(\vphi_0):= \ev_1(\vphi_1):=1$, and $\ev_0(\vphi_1):= \ev_1(\vphi_0):=0$. Although normalized cochains do not appear in the later sections of this paper, they appear in our related work \cite{MR} on complete $A_\infty$-algebras.

\item 
Let $\FF=\kk$ be a field of characteristic zero. Denote by $\Om_1:=(\kk[z,dz], \del_{\dR}, \wedge, 1_\kk)$ the polynomial de Rham algebra on the standard 1-simplex. As a graded vector space, it is concentrated in degrees 0 and 1: $\kkz=\kk[z] \oplus \kk[z]dz$, where $\kk[z]$ denotes the usual polynomial algebra.
The differential is the unique derivation sending $z \mapsto dz$ and $dz \mapsto 0$.
The algebra homomorphisms $\ev_0,\ev_1 \maps \Om_1 \to \kk$ are the evaluation maps at $z=0$, and $z=1$, respectively.
\end{enumerate}

\begin{proposition}\label{prop:Ch-pathobj}
Fix a model for the interval $\J:=(\J,\del, \mult,\unit)$.
The assignment $(V,d)  \mapsto  \bigl(V \ctensor \J, d_{\tensor} \bigr)$
to each $(V,d) \in \cCh$ is a functorial path object for the category $\cCh$.
\end{proposition}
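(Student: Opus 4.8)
The plan is to construct the entire path object by applying the functor $V \ctensor -$ from \eqref{eq:ctensor-func} to the three structure maps on $\Om_1$ recorded just above. Concretely, using the canonical identifications $V \ctensor \kk \cong V$ and $V \ctensor (\kk \times \kk) \cong V \times V$, I would set
\[
s := \id_V \ctensor \eta \maps V \to V \ctensor \Om_1, \qquad (d_0,d_1) := \id_V \ctensor \rho \maps V \ctensor \Om_1 \to V \times V,
\]
with $\eta$ and $\rho$ as in \eqref{eq:deRham-unit} and \eqref{eq:deRham-diag}, and for each morphism $g \maps V \to W$ in $\cCh$ I would define $g^I := g \ctensor \id_{\Om_1}$. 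Since $V \ctensor -$ is a functor, the composite is $(d_0,d_1)\, s = \id_V \ctensor (\rho\eta)$, and this is the diagonal because $\rho\eta = (\ev_0\eta, \ev_1\eta) = \Delta_\kk$. Functoriality of the assignment, hence the commuting square required by axiom \eqref{EA2}, follows at once from the bifunctoriality of $\ctensor$ and the naturality of $s, d_0, d_1$ in the first variable. It therefore remains only to verify that $s$ is a weak equivalence and that $(d_0,d_1)$ is a fibration in the sense of Def.\ \ref{def:chain-weq}.

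For the weak equivalence, the first step is the identification $\cF_n(V \ctensor \Om_1) \cong \cF_n V \ctensor \Om_1$, which one reads off the description of elements as coherent sequences: such an element lies in $\cF_n$ precisely when its image in $(V/\cF_n V)\tensor \Om_1$ vanishes, giving $\cF_n(V \ctensor \Om_1) = \plim_m (\cF_n V/\cF_m V)\tensor \Om_1$. Under this identification $\cF_n s$ becomes the completion of the levelwise map of towers $\id \tensor \eta$. The transition maps $\bp_{(m)} \tensor \id_{\Om_1}$ of the tower $\ttow(\cF_n V \tensor \Om_1)$ are surjective (flatness of $\Om_1$ over $\kk$), hence fibrations in $\Ch_\proj$, as are those of $\ttow(\cF_n V)$ (each $\cF_n V$ being itself complete, after the harmless reindexing of its filtration); and at level $m$ the map $\id \tensor \eta \maps (\cF_n V/\cF_m V) \to (\cF_n V/\cF_m V)\tensor \Om_1$ is a quasi-isomorphism by Künneth over the field $\kk$. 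Applying Prop.\ \ref{prop:towmodcat}(1) to $\cat M = \Ch_\proj$ then shows that $\plim$ of this levelwise weak equivalence, namely $\cF_n s$, is a quasi-isomorphism for every $n \geq 1$; that is, $s$ is a weak equivalence.

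For the fibration, I would exploit that $\rho \maps \Om_1 \fib \kk \times \kk$ is a degreewise split epimorphism of graded vector spaces, a section being $(a,b)\mapsto a(1-z)+bz$ in degree $0$ (the target is concentrated there). Tensoring this fixed section with $\cF_n V/\cF_m V$ and passing to the limit yields a degreewise section of $\cF_n(d_0,d_1) = \id_{\cF_n V}\ctensor \rho \maps \cF_n V \ctensor \Om_1 \to \cF_n V \times \cF_n V$, so this restriction is degreewise surjective for each $n$, which is exactly the fibration condition. The main obstacle throughout is the interaction of the homotopical data with the completion: both surjectivity and quasi-isomorphism are only visible levelwise, and one must control their behavior under $\plim$. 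This is precisely what the towers-of-fibrations hypothesis of Prop.\ \ref{prop:towmodcat} supplies for the weak equivalence, and what the Mittag--Leffler/splitting argument (equivalently, the vanishing of $\plim^1$ of the kernel tower) supplies for the fibration.
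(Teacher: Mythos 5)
Your proposal is correct and follows essentially the same route as the paper: the path object is obtained by applying the completed tensor functor to the cdga maps $\eta$ and $\rho$ from \eqref{eq:deRham-unit} and \eqref{eq:deRham-diag}, using the identifications $V \ctensor \kk \cong V$ and $V \ctensor (\kk\times\kk)\cong V\oplus V$, and the weak-equivalence and fibration conditions are checked via Prop.\ \ref{prop:towmodcat} applied to $\Ch_\proj$ together with an explicit linear section of $\rho$ --- this is exactly the content of the paper's Lemma \ref{lem:ctensor}. The only (minor) divergence is in how the restrictions to $\cF_n$ are handled: you identify $\cF_n(V\ctensor\Om_1)$ with the completion $\plim_m (\cF_nV/\cF_mV)\tensor\Om_1$ and reapply the tower lemma at each filtration level, whereas the paper first shows the unrestricted map $\id_V\ctensor f$ is a quasi-isomorphism and then deduces the statement for $\cF_n(\id_V\ctensor f)$ from the short exact sequence $0\to\cF_n(V\ctensor A)\to V\ctensor A\to (V/\cF_nV)\tensor A\to 0$ and the long exact sequence in cohomology; both arguments are sound.
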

The proof of the proposition will follow almost immediately from the next lemma.
\begin{lemma} \label{lem:ctensor}
Let $(V,d_V) \in \cCh$ be a complete cochain complex. 
\begin{enumerate}
\item If $f \maps (A,\del) \weq (A',\del')$ is a quasi-isomorphism in $\Ch$, then 
$\id_V \ctensor f \maps V \ctensor A \to V \ctensor A'$ is a weak equivalence in $\cCh$.   

\item If $f \maps A \fib A'$ is an epimorphism in $\Ch$, then $\id_V \ctensor f \maps V \ctensor A \to V \ctensor A'$ is a fibration in $\cCh$.
\end{enumerate}
\end{lemma}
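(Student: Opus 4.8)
The plan is to reduce both statements, one filtration level at a time, to the tower proposition Prop.\ \ref{prop:towmodcat} applied to the model category $\cat{M}=\Ch_\proj$. First I would record how the filtration on the completed tensor product interacts with restriction. Since $-\tensor_\kk A$ is exact, tensoring the short exact sequences of Notation \ref{note:cCh} with $A$ identifies $\cF_n(V\ctensor A)$ with the kernel of the projection $V\ctensor A \to (V/\cF_n V)\tensor A$; unwinding the coherent-sequence description of the completion then gives a natural isomorphism of cochain complexes
\[
\cF_n(V\ctensor A)\;\cong\;\plim_{m}\bigl((\cF_n V/\cF_m V)\tensor A\bigr),
\]
with induced differentials, under which $\cF_n(\id_V\ctensor f)$ becomes the limit of the tower map $\{\id_{(\cF_n V/\cF_m V)}\tensor f\}_m$. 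Setting $X_m:=(\cF_n V/\cF_m V)\tensor A$ and $Y_m:=(\cF_n V/\cF_m V)\tensor A'$ (both zero for $m\le n$), the downward transition maps are $\bp_{(k)}\tensor\id$, which are degree-wise surjective and hence fibrations in $\Ch_\proj$. Thus for each $n$ we are precisely in the setting of Prop.\ \ref{prop:towmodcat}, and checking the weak-equivalence (resp.\ fibration) condition of Def.\ \ref{def:chain-weq} amounts to checking the hypotheses of that proposition on the towers $X,Y$.

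For part (1) I would verify the level-wise hypothesis of Prop.\ \ref{prop:towmodcat}\eqref{item:towmodcat1}: that each $\id_{(\cF_n V/\cF_m V)}\tensor f$ is a quasi-isomorphism. Because $\cF_n V/\cF_m V$ is a $\kk$-vector space, the functor $(\cF_n V/\cF_m V)\tensor-$ is exact and commutes with cohomology, so this map induces $\id\tensor H^\bullet(f)$ on cohomology, an isomorphism since $f$ is a quasi-isomorphism. As the transition maps are fibrations, Prop.\ \ref{prop:towmodcat}\eqref{item:towmodcat1} then shows $\cF_n(\id_V\ctensor f)$ is a quasi-isomorphism for every $n$, i.e.\ that $\id_V\ctensor f$ is a weak equivalence in $\cCh$.

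For part (2) I would instead check the relative matching-map hypothesis of Prop.\ \ref{prop:towmodcat}\eqref{item:towmodcat2}: that $X_{m+1}\to X_m\times_{Y_m}Y_{m+1}$ is degree-wise surjective for all $m$ (the base case being trivial, as $X_0=Y_0=0$). Using the short exact sequence $0\to \cF_m V/\cF_{m+1}V\to \cF_n V/\cF_{m+1}V\to \cF_n V/\cF_m V\to 0$ and a graded linear splitting $s$ of its surjection, which exists since we work over a field, I would lift an element $(x,y')$ of the pullback by setting $x'=(s\tensor\id_A)(x)+g$. The first pullback coordinate is then matched automatically, and the second condition reduces to solving $(\id_{(\cF_m V/\cF_{m+1}V)}\tensor f)(g)=y'_1$ for the $(\cF_m V/\cF_{m+1}V)\tensor A'$-component $y'_1$ of $y'$; this is solvable because $f$ is degree-wise surjective and tensoring over $\kk$ preserves surjectivity. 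Prop.\ \ref{prop:towmodcat}\eqref{item:towmodcat2} then shows $\cF_n(\id_V\ctensor f)$ is degree-wise surjective for every $n$, so $\id_V\ctensor f$ is a fibration in $\cCh$.

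I expect the main obstacle to be part (2): getting the relative matching-map condition right is the one place where the two-step filtration and the splitting over $\kk$ genuinely enter, whereas part (1) collapses to exactness of $(\cF_n V/\cF_m V)\tensor-$ together with the tower proposition. A secondary point demanding care is the clean identification of $\cF_n(V\ctensor A)$ with the stated tower limit, and the verification that $\cF_n(\id_V\ctensor f)$ really is the induced limit map; once this bookkeeping is in place, both statements follow immediately from Prop.\ \ref{prop:towmodcat}.
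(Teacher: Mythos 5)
Your argument is correct, but it routes both parts through Prop.\ \ref{prop:towmodcat} in a way the paper does not. For part (1) the paper applies the tower proposition only once, to the full tower $\{\qf{V}{m}\tensor A\}_m$, to conclude that $\id_V\ctensor f$ itself is a quasi-isomorphism; the corresponding statement for each $\cF_n$ is then extracted from the long exact sequence in cohomology of $0\to\cF_n(V\ctensor A)\to V\ctensor A\to\qf{V}{n}\tensor A\to 0$, in which two of the three vertical maps are already known to be quasi-isomorphisms. Your alternative --- identifying $\cF_n(V\ctensor A)$ with $\plim_m\bigl((\cF_nV/\cF_mV)\tensor A\bigr)$ via left exactness of $\plim$ and rerunning the tower argument for each fixed $n$ --- is a valid substitute and has the virtue of uniformity, at the price of the extra bookkeeping you flag. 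For part (2) the divergence is larger: the paper does not invoke the tower proposition at all there, but instead chooses a degree-wise linear section $\si$ of $f$, notes that $\ha{\si}:=\plim\bigl(\id_{\qf{V}{n}}\tensor\si\bigr)$ is a filtration-compatible right inverse of $\id_V\ctensor f$ in $\cVect$, and reads off surjectivity of every restriction $\cF_n(\id_V\ctensor f)$ from the square relating $\ha{\si}$ to $\id_{\qf{V}{n}}\tensor\si$; this is shorter and yields an explicit global splitting, which is the same device used again in Lemma \ref{lem:split} and Prop.\ \ref{prop:Ch-pb}. Your relative-matching-map computation is nevertheless sound: the graded splitting $s$ of $\cF_nV/\cF_{m+1}V\fib\cF_nV/\cF_mV$ together with surjectivity of $\id\tensor f$ on the kernel $(\cF_mV/\cF_{m+1}V)\tensor A$ does lift any point of the pullback, and Prop.\ \ref{prop:towmodcat}\eqref{item:towmodcat2} then delivers the fibration condition level by level. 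Both routes are complete proofs; the paper's buys explicit sections, yours buys a single mechanism for both halves of the lemma.
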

\begin{proof}
\mbox{}
\begin{enumerate}[leftmargin=15pt]
\item 
We need to verify that $\id_V \ctensor f \maps V \ctensor A \to V \ctensor A'$ is a quasi-isomorphism 
and that  
\[
\cF_n (\id_V \ctensor f) \maps \cF_n(V \ctensor A) \to \cF_n(V \ctensor A')
\]  
is a quasi-isomorphism for each $n \geq 2$. The functor $\qf{V}{n} \tensor_\FF - \maps \Ch \to \Ch$ is exact, and therefore preserves quasi-isomorphisms.  This gives a commutative ladder of cochain complexes:
\[
\begin{tikzpicture}[operad style,row sep=2em,column sep=3em,]
\matrix (m) [matrix of math nodes]
{  
\cdots \& \qf{V}{n+1} \tensor A \& \qf{V}{n} \tensor A \& \cdots \\
\cdots \& \qf{V}{n+1} \tensor A' \& \qf{V}{n} \tensor A' \& \cdots \\
}; 
\path[->>,font=\scriptsize] 
   (m-1-1) edge node[auto] {$$} (m-1-2)
   (m-1-2) edge node[auto] {$$} (m-1-3)
   (m-1-3) edge node[auto] {$$} (m-1-4)
   (m-2-1) edge node[auto] {$$} (m-2-2)
   (m-2-2) edge node[auto] {$$} (m-2-3)
   (m-2-3) edge node[auto] {$$} (m-2-4)
;
\path[->,font=\scriptsize] 
   (m-1-2) edge node[auto] {$\id_{\qf{V}{n+1}} \tensor f$} node[sloped,below] {$\sim$} (m-2-2)
   (m-1-3) edge node[auto] {$\id_{\qf{V}{n}} \tensor f$} node[sloped,below] {$\sim$}(m-2-3)
  ;
\end{tikzpicture}
\]
in which every horizontal map is an epimorphism, and every vertical map is a quasi-isomorphism. 
Recall from Sec.\ \ref{sec:prelim} that epimorphisms and quasi-isomorphisms are the fibrations and weak equivalences, respectively, for the model category $\Ch_\proj$.
Therefore, by applying Prop.\ \ref{prop:towmodcat} to $\Ch_\proj$, we then deduce that 
$\id_V \ctensor f = \plim (\id_{\qf{V}{n}} \tensor f)$ is a quasi-isomorphism. As a consequence, for each $n \geq 2$, we obtain a morphism between short exact sequences
\[
\begin{tikzpicture}[operad style,row sep=2em,column sep=3em,]
\matrix (m) [matrix of math nodes]
{  
0 \& \cF_n (V \ctensor A) \& V \ctensor A \& \qf{V}{n} \tensor A  \& 0 \\
0 \& \cF_n (V \ctensor A') \& V \ctensor A' \& \qf{V}{n} \tensor A' \& 0\\
}; 
\path[->,font=\scriptsize] 
(m-1-1) edge node[auto] {$$} (m-1-2)
(m-1-2) edge node[auto] {$$} (m-1-3)
(m-1-3) edge node[auto] {$\qq{\ha{q}}{n}$} (m-1-4)
(m-1-4) edge node[auto] {$$} (m-1-5)
(m-2-1) edge node[auto] {$$} (m-2-2)
(m-2-2) edge node[auto] {$$} (m-2-3)
(m-2-3) edge node[auto] {$\qq{\ha{q}'}{n}$} (m-2-4)
(m-2-4) edge node[auto] {$$} (m-2-5)
(m-1-2) edge node[auto,swap] {$\cF_n (\id_V \ctensor f)$} (m-2-2)
(m-1-3) edge node[auto,swap] {$\id_V \ctensor f$} node[sloped,above] {$\sim$} (m-2-3)
(m-1-4) edge node[auto] {$\id_{\qf{V}{n}} \tensor f$} node[sloped,below] {$\sim$} (m-2-4)
;
\end{tikzpicture}
\]
in which two of the three vertical maps are quasi-isomorphisms. Hence, the third map
is a quasi-isomorphism as well.

\item Suppose $f \maps (A,\del) \to (A',\del')$ is an epimorphism. We need to verify that 
$\id_V \ctensor f \maps V \ctensor A \to V \ctensor A'$ is surjective, and that 
$\cF_n (\id_V \ctensor f) \maps \cF_n(V \ctensor A) \to \cF_n(V \ctensor A')$ is surjective for all $n\geq 2$. It suffices to verify these properties in the categories $\cVect$ and $\Vect$, respectively. Let $\si \maps A' \to A$ in $\Vect$ be a right inverse to $f$ as a map between graded vector spaces. Then $\id_V \tensor \si \maps A' \tensor V \to A \tensor V$ is a filtration preserving right inverse to $\id_V \tensor f$. Therefore
$\ha{\si}:= \plim_n \id_{\qf{V}{n}} \tensor \si$ is a right inverse to $\id_V \ctensor f$, and so the latter map is surjective. For each $n\geq 2$, recall that $\cF_n (V \ctensor A) := \ker \qq{\ha{q}}{n}$ and $\cF_n (V \ctensor A') := \ker \qq{\ha{q}'}{n}$, where 
$\qq{\ha{q}}{n}$ and $\qq{\ha{q}'}{n}$ are the canonical surjections to 
$\qf{V}{n} \tensor A$ and $\qf{V}{n} \tensor A'$, respectively. Hence, 
the commutativity of the diagram
\[
\begin{tikzdiag}{2}{2}
{
V \ctensor A' \& \qf{V}{n} \tensor A'\\
V \ctensor A \& \qf{V}{n} \tensor A  \\
};
\path[->,font=\scriptsize]
(m-1-1) edge node[auto,swap] {$\ha{\si}$} (m-2-1)
(m-1-2) edge node[auto] {$\id_{\qf{V}{n}} \tensor \si$} (m-2-2)
;
\path[->>,font=\scriptsize]
(m-1-1) edge node[auto] {$\qq{\ha{q}'}{n}$} (m-1-2)
(m-2-1) edge node[auto] {$\qq{\ha{q}}{n}$} (m-2-2)
;
\end{tikzdiag}
\]
implies that the image of the restriction $\si \vert_{\cF_n (V \ctensor A')}$ is contained in 
$\cF_n (V \ctensor A)$. Therefore, $\si \vert_{\cF_n (V \ctensor A')}$ is a right inverse to $\cF_n (\id_V \ctensor f)$.
\end{enumerate}
\itemqed
\end{proof}

\begin{proof}[Proof of Prop.\ \ref{prop:Ch-pathobj}]
Let $(V,d) \in \cCh$. There are canonical isomorphisms of complete cochain complexes:
\[
V \ctensor \FF  = \plim_n \Bigl (\qf{V}{n} \tensor \FF \Bigr ) \cong V, \quad
V \ctensor (\FF \times \FF)  = \plim_n \Bigl (\qf{V}{n} \tensor (\FF \times \FF) \Bigr) \cong V \times V.
\]
% \[
% \begin{split}
% V \ctensor \FF & = \plim_n \Bigl (\qf{V}{n} \tensor \FF \Bigr ) \cong V\\
% V \ctensor (\FF \times \FF) & = \plim_n \Bigl (\qf{V}{n} \tensor (\FF \times \FF) \Bigr) \cong V \times V.\\
% \end{split}
% \]
Therefore, by applying Lemma \ref{lem:ctensor} to the dg algebra morphisms $\unit$ and $\ro$ from
Def.\ \eqref{def:interval}, we obtain the desired factorization $V \xto{\id_V \ctensor \unit} V \ctensor \J \xto{\id_V \ctensor \ro} V \times V$ of the diagonal map in $\cCh$.
% \[
% V \xto{\id_V \ctensor \unit} V \ctensor \J \xto{\id_V \ctensor \ro} V \times V.
% \]
\end{proof}

\subsection{Homotopy inverses for weak equivalences in $\cCh$}
Propositions \ref{prop:Ch-pb} and \ref{prop:Ch-pathobj} imply that 
the CFO structure on $\cCh$ % described in Thm.\ \ref{thm:Ch_cfo}
satisfies the additional Axiom \eqref{EA2} from Sec.\ \ref{sec:acyc}. We now verify that Axiom \eqref{EA1} is also satisfied by proving a stronger statement: every acyclic fibration in $\cCh$ is a deformation retraction.
\begin{proposition} \label{prop:Ch-acyc}
Let $f \maps (V,d) \afib (V',d')$ be an acyclic fibration in $\cCh$. Then
\begin{enumerate}[leftmargin=15pt]
\item There exists a morphism $\tau \maps (V',d') \to (V,d)$ in $\cCh$  such that $f\tau = \id_{V'}$.   
\item There exists a degree $-1$ linear map $h \maps V \to \ker f$ such that $h(\cF_nV) \sse \ker f \cap \cF_nV$ for all $n \geq 1$, and such that $\id_V - \tau f = dh + hd$.
\end{enumerate} 
\end{proposition}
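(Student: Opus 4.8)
The plan is to reduce both statements to a single construction: a \emph{filtration-preserving contraction} of the kernel complex. Write $K:=\ker f \in \cCh$, which by Lemma \ref{lem:ker} is complete filtered with $\cF_n K = \cF_n V \cap \ker f = \ker \cF_n f$. Since $f$ is an acyclic fibration, each $\cF_n f$ is a surjective quasi-isomorphism, so every $\cF_n K$ is acyclic; applying the long exact sequence in cohomology to the short exact sequences $0 \to \cF_{n+1}K \to \cF_n K \to \cF_n K/\cF_{n+1}K \to 0$ shows each associated graded piece $\cF_n K/\cF_{n+1}K$ is acyclic as well. The first sub-claim I would isolate is that such a $K$ admits a degree $-1$ filtered map $s \maps K \to K$ with $s(\cF_n K)\sse \cF_n K$ and $ds + sd = \id_K$.

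I expect this filtered contraction to be the \textbf{main obstacle}, and I would prove it by an inductive construction over the tower $\ttow(K)$ that closely mirrors the proof of Lemma \ref{lem:split}. Building contractions $s_n \maps \qf{K}{n} \to \qf{K}{n}$ of the acyclic quotient complexes $(\qf{K}{n},\qq{d}{n})$ satisfying $\qq{d}{n}s_n + s_n\qq{d}{n} = \id$ and the compatibility $\bp_{(n)} s_{n+1} = s_n \bp_{(n)}$, one then sets $s:=\plim s_n$, which exists and is filtered by Lemma \ref{lem:fvect}. The inductive step uses the short exact sequence \eqref{diag:ses2} applied to $K$: choosing a graded splitting of $\qf{K}{n+1} \to \qf{K}{n}$ and a contraction of the acyclic kernel $\cF_n K/\cF_{n+1}K$, the remaining off-diagonal term is solved by exploiting that the relevant $\Hom$-complex is acyclic (both source and target being contractible complexes of $\kk$-vector spaces). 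This is where the field hypothesis and the acyclicity of the associated graded are essential.

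Granting $s$, I would finish as follows. For statement (1), pick by Lemma \ref{lem:split} a filtered linear section $\si \maps V' \to V$ of $f$ (not a priori a chain map), and record its defect $\be := d\si - \si d' \maps V' \to K$, which is filtered of degree $+1$ and satisfies $d_K\be + \be d' = 0$. Setting $\tau := \si - s\be$ gives a filtered map with $f\tau = \id_{V'}$ (since $s\be$ lands in $K=\ker f$), and the contraction identity together with $\be d' = -d\be$ yields
\[
d\tau - \tau d' = \be - (ds+sd)\be = \be - \be = 0,
\]
so $\tau$ is a morphism in $\cCh$. For statement (2), given any chain-map section $\tau$, the map $P := \id_V - \tau f$ is a filtered chain map with image in $K$ (as $fP = f - f\tau f = 0$) restricting to $\id_K$ on $K$. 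I would then set $h := sP = s(\id - \tau f) \maps V \to \ker f$, which is filtered with $h(\cF_n V)\sse \ker f \cap \cF_n V$; using $Pd = dP$ and $(ds+sd)\vert_K = \id_K$ one computes $dh + hd = (ds+sd)P = P = \id - \tau f$, as required.
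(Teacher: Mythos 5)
Your proof is correct, but it is organized differently from the paper's. The paper runs two separate inductions over the tower $\{V/\cF_nV\}$ (resp.\ $\{V'/\cF_nV'\}$): one building compatible chain sections $\ta_n$ of $\qq{f}{n}$ by correcting the linear sections from Lemma \ref{lem:split} with primitives in the acyclic complex $\Hom_\kk(\qf{V'}{n+1},\ker\cF_n\qq{f}{n+1})$, and a second, parallel induction building compatible homotopies $h_n$ with $\qq{d}{n}h_n+h_n\qq{d}{n}=\id-\ta_n\qq{f}{n}$. You instead isolate a single tower induction -- the construction of a filtration-preserving contraction $s$ of $K=\ker f$, using the acyclicity of each $\cF_nK/\cF_{n+1}K$ and of the relevant Hom-complexes -- and then derive both statements by short perturbation-style computations: $\tau=\si-s\be$ with $\be=d\si-\si d'$ for (1), and $h=s(\id-\tau f)$ for (2). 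The underlying technical engine is identical (Lemma \ref{lem:split}, the tower, acyclicity of the associated graded of the kernel, and the field hypothesis making $\Hom$ into a contractible complex acyclic), and your verifications of $f\tau=\id_{V'}$, $d\tau=\tau d'$, and $dh+hd=\id-\tau f$ all check out, including the filtration statements. What your route buys is modularity: the filtered contraction of an ``levelwise acyclic'' complete complex is a reusable lemma, and (1) and (2) become formal consequences; what the paper's route buys is that it never leaves the quotient complexes and sections appearing in the statement, at the cost of doing the inductive correction twice. One small point to make explicit in a writeup: as in the paper's own proof of part (2), passing to the limit $s=\plim s_n$ requires the degree $-1$ analogue of Lemma \ref{lem:fvect}, which is harmless but should be flagged.
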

\newcommand{\tsi}{\ti{\si}}

\begin{proof}
\mbox{}

\begin{enumerate}[leftmargin=15pt]

\item We proceed by induction. Set $\ta_1:=0$. Since $\qq{f}{2} \maps \qf{V}{2} \to \qf{V'}{2}$ is a surjective quasi-isomorphism, we may choose a right inverse $\tau_2 \maps \qf{V'}{2} \to \qf{V}{2}$ in $\Ch$. Now assume $n \geq 2$, and suppose we have a chain map   $\ta_{n} \maps (\qf{V'}{{n}},\qq{d'}{n}) \to (\qf{V}{n}, \qq{d}{n})$ satisfying
\begin{equation} \label{eq:acyc1}
\qq{f}{n} \ta_n = \id_{\qf{V'}{n}}, \quad \ta_{n-1}\qpr{n-1} = \qpp{n-1}\ta_n.
\end{equation}
\myindent As in the proof of Lemma \ref{lem:split}, we use the inductive hypothesis to first produce a {\it linear} map $\si_{n+1} \maps
\qf{V'}{n+1} \to \qf{V}{n+1}$ such that $\qq{f}{n+1} \si_{n+1} = \id$, and 
$\ta_{n} \cc \qq{\bp'}{n} = \qq{\bp}{n} \cc \si_{n+1}$. Now let $\tha \maps \qf{V'}{n+1} \to \qf{V}{n+1}$ be the degree $1$ linear map $\tha:= \qq{d}{n+1} \si_{n+1} - {\si}_{n+1}\qq{d'}{n+1}$. Then $\tha$ is a degree $1$ coboundary in the hom-complex $\bigl(\Hom_\FF(\qf{V'}{n+1}, \qf{V}{n+1}),\pa \bigr)$. Since $\tau_n$ is a morphism of cochain complexes, we have the equalities  $\qpp{n}\tha = \qpp{n} \cc \pa \si_{n+1} = (\qq{d}{n}\ta_{n} - \ta_n \qq{d'}{n}) \qpr{n}=0$.
Hence $\im \tha \sse \ker  
\qpp{n} = \cF_nV/\cF_{n+1}V$. Moreover, since $\qq{f}{n+1}$ is a chain map and $\si_{n+1}$ is a linear right inverse of $\qq{f}{n+1}$, we deduce that $\im \tha \sse \ker \qpp{n} \cap \ker \qq{f}{n+1}= \ker \cF_{n}\qq{f}{n+1}$.

\myindent It follows from Remark \ref{rmk:cCh} that $\cF_n \qq{f}{n+1}$ is a quasi-isomorphism, and since $\cF_n \qq{f}{n+1}$ is also surjective, we conclude that  $\ker \cF_{n}\qq{f}{n+1}$ is contractible. Hence, $\tha$ is a degree $1$ cocycle in the acyclic complex $\Hom_\FF(\qf{V'}{n+1}, \ker \cF_{n}\qq{f}{n+1})$, so there exists a degree zero linear map 
$\eta \maps \qf{V'}{n+1} \to \ker \cF_{n}\qq{f}{n+1}$ such that 
\[
\tha = \qq{d}{n+1} \eta - \eta  \qq{d'}{n+1}.
\]

\myindent Finally, let $\ta_{n+1}:= \si_{n+1} - i \cc \eta$, where $i \maps \cF_nV/\cF_{n+1}V \emb \qf{V}{n+1}$ is the inclusion. By construction, $\ta_{n+1}$ is a chain map satisfying the equalities 
$\qq{f}{n+1} \ta_{n+1} = \id$, and $\ta_{n} \cc \qq{\bp'}{n} = \qq{\bp}{n} \cc \ta_{n+1}$. This completes the inductive step, and therefore $\ta:=\plim \ta_k$ is a right inverse to $f$ in $\cCh$.

\item First, since $f$ is a fibration, Remark \ref{rmk:cCh}(2) implies that we have an isomorphism of complexes 
\[
\qf{\ker f}{k} \cong \ker \qq{f}{k}
\]
for each $k\geq 1$. So we proceed as in part (1), and inductively construct for each $k \geq 1$ a degree $-1$ linear map
$h_k \maps \qf{V}{k} \to \ker \qq{f}{k}$ such that 
\begin{equation} \label{eq:acyc2}
\qpp{k-1} h_k = h_{k-1} \qpp{k-1}, \quad \qq{d}{k} h_k + h_k\qq{d}{k}=\qq{g}{k},
\end{equation}
where $\qq{g}{k} \maps \qf{V}{k} \to \ker \qq{f}{k}$ is the chain map  $\qq{g}{k}:=\id_{\qf{V}{k}} - \ta_k \qq{f}{k}$,  and $\ta_k$  is the chain map satisfying \eqref{eq:acyc1} from part (1). 

\myindent Set $h_1:=0$. Since $\qq{f}{2}$ is a surjective quasi-isomorphism, the complex $\ker \qq{f}{2}$ is acyclic. Since we work over a field, we may choose a null homotopy $h \maps \ker \qq{f}{2} \to \ker \qq{f}{2}$. Let $h_2 \maps \qf{V}{2} \to \ker \qq{f}{2}$ be the degree $-1$ linear map $h_2:= h\qq{g}{2}$. Then, by construction, $h_2$ satisfies the equalities \eqref{eq:acyc2} with $k=2$. Assume $n \geq 2$, and suppose $h_n \maps \qf{V}{n} \to \ker \qq{f}{n}$
satisfies the equalities \eqref{eq:acyc2} for $k=n$.  Consider the commutative diagram of cochain complexes
\begin{equation}\label{diag:acyc2}
\begin{tikzdiag}{2}{3}
{
\cF_nV/\cF_{n+1}V \& \qf{V}{n+1} \& \qf{V}{n} \\
\ker \cF_{n}\qq{f}{n+1} \&  \ker \qq{f}{n+1}  \& \ker \qq{f}{n} \\
};

\path[->,font=\scriptsize]
(m-1-1) edge node[auto] {$i$} (m-1-2) edge node[auto,swap] {$\cF_n \qq{g}{n+1}$} (m-2-1) 
(m-1-2) edge node[auto,swap] {$\qq{g}{n+1}$} (m-2-2) 
(m-1-3) edge node[auto] {$\qq{g}{n}$} (m-2-3)
(m-2-1) edge node[auto] {$i$} (m-2-2)
(m-2-2) edge node[auto] {$\qpp{n} \vert_{\ker}$} (m-2-3)  
;
\path[->>,font=\scriptsize]
(m-1-2) edge node[auto] {$\qpp{n}$} (m-1-3) 
;
\end{tikzdiag}
\end{equation}
The hypotheses on $f$ imply that $\cF_{n}\qq{f}{n+1}$, $\qq{f}{n+1}$, and $\qq{f}{n}$ are all 
surjective quasi-isomorphisms. Therefore each complex in the bottom row of diagram \eqref{diag:acyc2} is acyclic. The top row of \eqref{diag:acyc2} splits in the category of graded vector spaces; the bottom row does as well, since $g_{(n)}$ is surjective. Hence, $\qpp{n} \vert_{\ker}$ is a surjective quasi-isomorphism of cochain complexes, and therefore there exists a chain map
\[
\eta \maps (\ker \qq{f}{n}, \qq{d}{n}) \to (\ker \qq{f}{n+1}, \qq{d}{n+1})
\] 
such that $\qpp{n} \vert_{\ker} \cc \eta = \id$. Define $\ka \maps \qf{V}{n+1} \to \ker \qq{f}{n+1}$
be the degree $-1$ linear map $\ka := \eta \cc h_n \cc \qpp{n}.$ By construction, 
we have $ \qpp{n}\ka = h_n \qpp{n}$. 

\myindent Next, we modify $\ka$ to obtain the desired homotopy satisfying \eqref{eq:acyc2}.
Let $\lam \maps \qf{V}{n+1} \to \ker \qq{f}{n+1}$ be the degree $0$ linear map
\[
\lam := \qq{d}{n+1}\ka + \ka \qq{d}{n+1} - \qq{g}{n+1} = \eta \cc  \qq{g}{n} \cc \qpp{n} - \qq{g}{n+1}. 
% \begin{split}
% \lam &:= \qq{d}{n+1}\ka + \ka \qq{d}{n+1} - \qq{g}{n+1}\\
% & = \eta \qq{g}{n} \cc \qpp{n} - \qq{g}{n+1}. 
%\end{split}
\]
Note that the last equality above follows from the commutativity of the diagram \eqref{diag:acyc2}. A direct computation shows that $\qpp{n}\lam =0$. Hence, $\lam$ is a degree $0$ cocycle in the acyclic complex $\bigl(\Hom_\FF(\qf{V}{n+1}, \ker \cF_{n}\qq{f}{n+1}),\pa \bigr).$
% \[
% \bigl(\Hom_\FF(\qf{V}{n+1}, \ker \cF_{n}\qq{f}{n+1}),\pa \bigr).
% \]

\myindent Let $\rho \maps \qf{V}{n+1} \to \ker \cF_{n}\qq{f}{n+1}$ be a degree $-1$ map satisfying $\lam = \pa \ro$. Define $h_{n+1}:= \ka -i\cc \ro$, where $i$ is the inclusion map in diagram \eqref{diag:acyc2}. By construction, we have $\qpp{n}h_{n+1} = \qpp{n}\ka = h_n \qpp{n}$, and
$\pa h_{n+1} = \lam + \qq{g}{n+1} - \pa \ro = \qq{g}{n+1}.$
% \[
% \pa h_{n+1} = \lam + \qq{g}{n+1} - \pa \ro = \qq{g}{n+1}.
% \]
This completes the inductive step. 

\myindent Finally, by applying Lemma \ref{lem:fvect} to the morphism of towers
$\{h_n\} \maps \ttow(V) \to \ttow(\ker f[-1])$ we % Applying a degree $-1$ analog of Lemma \ref{lem:fvect}, we then
obtain the sought after filtered homotopy $h:= \plim h_{n}$ satisfying $\id - \tau f = dh + hd$.
\end{enumerate}
\itemqed
\end{proof}

\begin{corollary} \label{cor:Ch-quasi-inverse}
If $f \maps (V,d) \weq (V',d')$ is a weak equivalence in $\cCh$, then there exists a weak equivalence
$g \maps (V',d') \weq (V,d)$, as well as homotopies
$h_V \maps (V,d) \to (V \ctensor \J, d_\tensor)$ and $h_{V'} \maps (V',d') \to (V' \ctensor \J, d'_\tensor)$,
% \[
% h_V \maps (V,d) \to (V \ctensor \Om_1, d_\Om), \quad  h_{V'} \maps (V',d') \to (V' \ctensor \Om_1, d'_\Om), 
% \] 
which induce equivalences $gf \heq \id_V$ and $fg \heq \id_{V'}$
\end{corollary}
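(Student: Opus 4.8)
The plan is to recognize that this corollary is not meant to require any fresh argument: it should drop out immediately from the abstract machinery developed in Section~\ref{sec:acyc}, specifically Prop.~\ref{prop:bifib-invert}, once one confirms that $\cCh$ meets the hypotheses of that proposition. Recall that Prop.~\ref{prop:bifib-invert} asserts that in any CFO satisfying axioms \eqref{EA1} and \eqref{EA2}, every weak equivalence $f \maps X \weq Y$ admits a weak-equivalence homotopy inverse $g \maps Y \weq X$ together with homotopies $h_X \maps X \to \Path(X)$ and $h_Y \maps Y \to \Path(Y)$ realizing $gf \heq \id_X$ and $fg \heq \id_Y$ via the (functorial) distinguished path objects. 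So the entire task reduces to verifying that $\cCh$, with the weak equivalences and fibrations of Def.~\ref{def:chain-weq}, is such a category.

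The three ingredients I would check, in order, are as follows. First, $\cCh$ is a category of fibrant objects: this is exactly Thm.~\ref{thm:Ch_cfo}. Second, $\cCh$ satisfies Axiom~\eqref{EA2}, i.e.\ it carries a functorial path object: Prop.~\ref{prop:Ch-pathobj} provides precisely this, with the assignment $\Path(V,d) := \bigl(V \ctensor \Om_1, d_\Om\bigr)$ and functoriality coming from the functor $V \ctensor -$ of \eqref{eq:ctensor-func} applied to morphisms of $\Om_1$-forms. Third, $\cCh$ satisfies Axiom~\eqref{EA1}, i.e.\ every acyclic fibration is a retraction: this is the content of Prop.~\ref{prop:Ch-acyc}(1), which constructs for each acyclic fibration $f$ a section $\tau$ with $f\tau = \id$. (Part~(2) of that proposition, giving the contracting homotopy, is stronger than needed here but harmless.)

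With these confirmed, I would simply instantiate Prop.~\ref{prop:bifib-invert} at the given weak equivalence $f \maps (V,d) \weq (V',d')$, taking $X = (V,d)$, $Y = (V',d')$. This yields a weak equivalence $g \maps (V',d') \weq (V,d)$ and homotopies into the distinguished path objects, which under the identification of Prop.~\ref{prop:Ch-pathobj} are exactly $h_V \maps (V,d) \to (V \ctensor \Om_1, d_\Om)$ and $h_{V'} \maps (V',d') \to (V' \ctensor \Om_1, d'_\Om)$, inducing $gf \heq \id_V$ and $fg \heq \id_{V'}$ as claimed.

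I do not expect a genuine obstacle here, since all the substantive work has already been carried out in the preceding propositions; the corollary is an assembly step. The only point demanding a small amount of care is bookkeeping: making explicit that the functorial path object of Axiom~\eqref{EA2} is the specific completed tensor product $-\ctensor \Om_1$, so that the abstract $\Path(X)$ and $\Path(Y)$ appearing in Prop.~\ref{prop:bifib-invert} are correctly translated into the concrete targets $V \ctensor \Om_1$ and $V' \ctensor \Om_1$ named in the statement.
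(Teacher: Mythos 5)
Your proposal is correct and matches the paper's own argument exactly: the paper deduces the corollary by noting that Prop.~\ref{prop:Ch-pathobj} gives Axiom \eqref{EA2}, Prop.~\ref{prop:Ch-acyc} gives Axiom \eqref{EA1}, and then invoking Prop.~\ref{prop:bifib-invert}. Nothing is missing.
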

\begin{proof}
Follows from the above proposition along with Prop.\ \ref{prop:bifib-invert} from Sec.\ \ref{sec:acyc}. 
\end{proof}

\section{Complete shifted $L_\infty$-algebras} \label{sec:Linf}
From here on, we work over a field $\FF=\kk$ of characteristic zero.

\subsection{Reminder on coderivations and coalgebra morphisms} \label{sec:coder}
For a graded vector space $V \in \Vect$, we let $\S(V)$ denote the reduced cofree conilpotent cocommutative coalgebra generated by $V$. Let $\Phi \maps \S(V) \to \S(V')$ be a $\kk$-linear map.
For $p,m \geq 1$ the  notation $\Phi^p_{m}$ is reserved for the restriction-projections 
\begin{equation*} %\label{eq:rest-proj}
\Phi^p_{m} \maps \S^{m}(V) \to \S^{p}(V') \qquad  \Phi^p_{m}:= \pr_{\S^{p}(V')} \circ \Phi \vert_{\S^{m}(V)} 
\end{equation*}
Furthermore, we denote by $\Phi^1 \maps \S(V) \to V'$ the linear map $\Phi^{1} := \pr_{V'}\cc\Phi$.

We recall that coderivations $Q \maps \S(V) \to \S(V)$ are in one to one correspondence with linear maps $Q^1 \maps \S(V) \to V$ via the formula $Q(v_1,v_2, \ldots, v_n) = Q^1_n(v_1,v_2, \ldots, v_n) + \sum_{t \geq 1}^n Q^t_n(v_1,v_2,\ldots,v_n),$
where
\begin{equation} \label{eq:coder}
\begin{split}
Q^{t}_{n}(v_{1}, \ldots, v_{n}) = \hspace{-.5cm}\sum_{\sigma \in \Sh(n-t+1,t-1)} \hspace{-.5cm}
\epsilon(\sigma) ~ Q^1_{n-t+1}(v_{\sigma(1)}, \ldots, v_{\sigma(n-t+1)}) v_{\sigma(n-t+2)} \cdots v_{\sigma(n)}.
\end{split}
\end{equation}
Above, $\Sh(p,q) \sse S_{n}$ is the set of $(p,q)$ shuffles. 
Similarly, a coalgebra morphism $\Phi \maps \S(V) \to \S(V')$ is uniquely determined by the degree $0$ linear map $\sPhi \maps \S(V) \to V'$ via the formula 
\begin{multline}
\label{eq:map}
\Phi(v_1, v_2, \ldots, v_n) = \\
 \sum_{t \ge 1}
\sum_{\substack{ k_1+ \dots + k_t = n \\[0.1cm] k_j \ge 1 }} 
\sum_{\si }  
\sPhi(v_{\si(1)}, \ldots, v_{\si(k_1)}) \sPhi(v_{\si(k_1+1)}, \ldots, v_{\si(k_1 + k_2)}) 
\cdots \sPhi(v_{\si(n-k_t+1)}, \ldots, v_{\si(n)})\,,
\end{multline}
where the index $\si$ ranges over those $(k_1,k_2,\ldots,k_t)$ shuffles in $S_{n}$
that satisfy the condition $\si(1) < \si(k_1+1) <  \si(k_1+k_2+1) < \dots < \si(n-k_t+1)$.

Let $Q$ and $Q'$ be coderivations on $\S(V)$ and $\S(V')$, respectively, and let 
$\S(V) \xto{\Phi} \S(V') \xto{\Psi} \S(V'')$ be coalgebra morphisms. 
Recall that the above formulas imply that the various compositions allowed between $Q$, $Q'$, $\Phi$, and $\Psi$ are uniquely determined by linear maps $(\Phi Q)^1 \maps \S(V) \to V'$, $(Q'\Phi)^1 \maps \S(V) \to V'$, and $(\Psi \Phi)^1 \maps \S(V) \to V''$, where
\begin{equation} \label{eq:comp}
(\Phi Q)^1_n = \sum^n_{t=1} \Phi^1_t Q^t_n, \quad
(Q'\Phi)^1_n = \sum^n_{t=1} Q'^1_t \Phi^t_n, \quad
(\Psi \Phi)^1_n = \sum^n_{t=1} \Psi^1_t \Phi^t_n.
\end{equation}

\subsubsection{Filtered coderivations and coalgebra morphisms}
If $V \in \fVect$, then the filtration on $V$ % $V=\cF_0V =\cF_1V \sss \cF_2V \sss \cdots$  
induces one on the tensor powers $ V^{\tensor k} = \cF_1 V^{\tensor k}  \sss \cF_2 V^{\tensor k}  \sss \cdots$:
\begin{equation*} %\label{eq:filttensor}
 \begin{split}
 % V^{\tensor k}&= \cF_0V^{\tensor k} = \cF_1 V^{\tensor k}  \sss \cF_2 V^{\tensor k}  \sss \cdots \\
 \cF_n V^{\tensor k} & : = \hspace{-15pt} \bigoplus_{i_1 + i_2 + \cdots + i_k = n} \cF_{i_1} V \tensor \cF_{i_2} V \tensor 
\cdots \tensor \cF_{i_k} V,
\end{split}
\end{equation*}
which in turn induces a filtration on the tensor algebra $\ba{T}(V)$ given by
$\cF_n \ba{T}(V) := \bigoplus_{k\geq 1}  \cF_n V^{\tensor k}$.
% \[
% \ba{T}(V)= \cF_1\ba{T}(V) \sss \cF_2 \ba{T}(V) \sss \cdots, \qquad  \cF_n \ba{T}(V) := \bigoplus_{k\geq 1}  \cF_n V^{\tensor k}.
% \]
Finally, this decreasing filtration induces one on the subspace $\S(V)$:  
\begin{equation} \label{eq:filtsym}
\begin{split}
\S(V)= \cF_1\S(V) \sss \cF_2 \S(V) \sss \cdots, \qquad  \cF_n \S(V) := \S(V) \cap \cF_n \ba{T}(V).
\end{split}
\end{equation}
We summarize the above discussion with the following lemma, whose proof is straightforward.

\begin{lemma} \label{lem:fcompat}
Let $V,V' \in \fVect$. Let $\sPhi \maps \S(V) \to V'$ be a degree $0$ linear map, and $\sQ \maps \S(V) \to V$ be a linear map of arbitrary degree. If for all $m \geq  1$ and $i_1,\ldots,i_m \geq 1$ we have
\begin{equation} \label{eq:fcompat}
\begin{split}
\sPhi( \cF_{i_1} V \otimes  \cF_{i_2} V \otimes \dots \otimes  \cF_{i_m} V) &\sse \cF_{i_1 + i_2 + \dots + i_m} V', \\
\sQ( \cF_{i_1} V \otimes  \cF_{i_2} V \otimes \dots \otimes  \cF_{i_m} V) &\sse \cF_{i_1 + i_2 + \dots + i_m} V, \\
\end{split}
\end{equation}
then the corresponding coalgebra morphism $\Phi \maps \S(V) \to \S(V')$ and coderivation $Q \maps \S(V) \to \S(V)$ are compatible with the filtrations \eqref{eq:filtsym}.

Moreover, if, as above, $Q$, $Q'$ and $\Phi$, $\Psi$ are filtration-compatible coderivations and coalgebra morphisms, respectively, then their compositions \eqref{eq:comp} are also compatible with the filtrations \eqref{eq:filtsym}, whenever they are well-defined.
\end{lemma}

\subsection{$\sinf$-algebras} \label{sec:shiftLinf}
In order to make certain calculations more straightforward, as well as  match the conventions in our previous work \cite{GM_Theorem}, we will use shifted $L_\infty$-algebras. 
A shifted $L_\infty$-structure on $L$ is equivalent to an $L_\infty$-structure on $\bs L$,
the suspension of $L$. More precisely, a {\bf $\sinf$-algebra} $(L,d,Q)$ is a cochain complex $(L,d) \in \Ch$ for
which the reduced cocommutative coalgebra $\S(L)$ is equipped with a
degree 1 coderivation $Q$ such that $Q x=d x$ for all $x \in L$
and $Q^2=0$. This structure is equivalent to specifying a sequence of \textit{degree one} 
brackets $\{\cdot,\cdot, \ldots, \cdot \}_m \maps \S^m(L) \to L$ for each $ m \geq 1$
satisfying compatibility conditions which can be thought of as higher--order Jacobi and Leibniz identities. (See Eq.\ 2.5. in \cite{GM_Theorem}.)

An \df{$\infty$-morphism}, or \df{weak $L_\infty$-morphism} $\Phi \maps (L,d,Q) \to (L',d',Q')$
between two $\sinf$-algebras is a dg coalgebra morphism $\Phi \maps \bigl( \S(L), Q \bigr) \to \bigl( \S(\pri{L}), \pri{Q} \bigr)$.
% \begin{equation*} %\label{eq:morphism} 
% \Phi \maps \bigl( \S(L), Q \bigr) \to \bigl( \S(\pri{L}), \pri{Q} \bigr).
% \end{equation*}
We denote by $\SLinf$ the category of $\sinf$-algebras and $\infty$-morphisms.
The compatibility of $\Phi$ with the codifferentials $Q$ and $Q'$ can be expressed 
as a sequence of equations in terms of the brackets $\brac{m}$ and linear maps $\sPhi_k$. (See Eq.\ 2.8 in \cite{Enhanced}.) In particular, $\Phi Q = Q' \Ph$ implies that the linear term, or \df{tangent map} of the dg coalgebra morphism $\Ph$ induces a map of cochain complexes:
\begin{equation*} \label{eq:tangent} 
\tan(\Ph):=\Phi^1_1= \pr_{\ti{L}} \Phi \vert_{L}  \maps (L,d) \to (L',d').
\end{equation*}
Following the standard terminology in deformation theory, we call $(L,d)$ the \df{tangent complex} of the $\sinf$-algebra $(L,d,Q)$. This defines a functor
\begin{equation} \label{eq:tan}
\tan \maps \SLinf \to \Ch.
\end{equation}
%We recall some standard terminology for morphisms.
\begin{definition} \label{def:Linf-maps-def}
A morphism $\Ph \maps (L,d,Q) \to (L',d',Q')$ in $\SLinf$
is an \df{$\infty$-quasi-isomorphism} if
$\tan(\Ph)$ is a quasi--isomorphism of cochain complexes. 
Similarly,  $\Ph$ is an \df{$\infty$-epimorphism} if $\tan(\Ph)$
is degreewise surjective. Finally we say $\Ph \maps (L,d,Q) \to (L',d',Q')$ is \df{strict} if it is completely determined by its tangent map i.e.
\begin{equation} \label{eq:strict}
\sPhi(x_1,\ldots,x_m)=0 \quad \forall m \geq 2.
\end{equation}
\end{definition}
Note that in the strict case, we have $\Ph(x_1,x_2, \ldots, x_m)= \sPhi_1(x_1)\sPhi_1(x_2) \cdots \sPhi_1(x_m) \in \S^m(L')$ and
\[
\sPhi_1 \sQ_m(x_1, x_2, \ldots, x_m) =  Q'^{1}_m \bigl( \sPhi_1(x_1),\sPhi_1(x_2), \ldots, \sPhi_1(x_m) \bigr) \quad \forall m \geq 1.
\]
\begin{notation}
We will often write a  strict $\infty$-morphism $\Phi \maps (L,d,Q) \to (L',d',Q')$ as 
\[
\Ph=\sPh
\]
in order to emphasize the equalities \eqref{eq:strict}. In doing so, we tacitly identify $\Phi$ 
with its tangent map.
\end{notation}

\subsection{Complete $\sinf$-algebras} \label{sec:FLie}
In this section, we recall the definitions of complete $\sinf$-algebras and their $\infty$-morphisms. We record some basic properties, and analyze some important examples, including what we call ``bounded'' complete $\sinf$-algebras.
\begin{definition} \label{def:filtcompLinf}
A \df{filtered $\sinf$-algebra} $(L,d,Q)$ is 
a filtered cochain complex $(L,d)$ whose
reduced cocommutative coalgebra $\S(L)$ is equipped with a
degree 1 coderivation $Q$ such that $Q \vert_L =d$ and $Q\circ Q =0$, and  
for all $m \geq 2$ and $i_1,\ldots,i_m \geq 1$
\[
\sQ \bigl(\cF_{i_{1}}L \tensor \cF_{i_{2}}L \tensor \cdots \tensor \cF_{i_{m}}L \bigr) \subseteq
\cF_{i_{1} + i_{2} + \cdots + i_{m}} L.
\]
A \df{complete $\sinf$-algebra}, or {\boldmath $\cinf$}\df{-algebra} for short, is a 
filtered $\sinf$-algebra $(L,d,Q)$ such that $(L,d) \in \cCh$.  
\end{definition}

\begin{remark}
A $\cinf$-algebra in our sense is a shifted analog of a 
complete $L_\infty$-algebra, in the sense of A.\ Berglund \cite[Def.\ 5.1]{Berglund}.
See also \cite[Ch. 6]{Markl-book}.
\end{remark}

Let $\FLie$ be the category whose objects are filtered
$\sinf$-algebras and whose morphisms are \df{filtered  $\infty$-morphisms}, i.e.,
$\infty$-morphisms $\Phi \maps (L,d,Q) \to (\pri{L},d',\pri{Q})$ that satisfy
\begin{equation}
\label{eq:fmorph}
\Phi^1( \cF_{i_1} L \otimes  \cF_{i_2} L \otimes \dots \otimes  \cF_{i_m} L) \sse \cF_{i_1 + i_2 + \dots + i_m} \pri{L}.
\end{equation}
We denote by $\cLie \sse \FLie$ the full subcategory whose objects are $\cinf$-algebras.

\begin{remark} \label{rmk:disc-filt} %\todo{Do we need this?}
Every cochain complex $(V,d)$ can be given the structure of an \df{abelian $\sinf$-algebra} by defining $\sQ_1 := d$ and $\sQ_{k \geq 2}:=0$. The corresponding functor $\Ch \to \SLie$ is obviously not full. Let $(L,d,Q) \in \SLie$. Give $L$ the ``discrete filtration'' $\cF_1L:=L$, and $\cF_{k}L =0$ $\forall k \geq 2$. Then this filtration is compatible with the $\sinf$-structure if and only if $(L,d,Q)$ is abelian. In particular, equipping a cochain complex $(V,d) \in \Ch$ with the discrete filtration, gives a functor $\mathrm{disc} \maps \Ch \to \cLie$, which realizes $\Ch$ as a \textit{full}
subcategory of $\cLie$.
%\end{enumerate}

\end{remark}

We conclude this subsection by noting that the familiar characterization of $\sinf$-isomorphisms extends to isomorphisms in $\cLie$.
\begin{proposition}
A morphism $\Ph \maps (L,d,Q) \to (L',d',Q')$ in $\cLie$ is an isomorphism if and only if 
$\tan(\Ph)$ is an isomorphism in $\cVect$.
% there exists a morphism of filtered graded vector spaces $\psi \maps \pri{L} \to L \in \fVect$ such that $\psi\sPh = \id_{L}$ and 
% $\sPh \psi = \id_{\pri{L}}$.
\end{proposition}
\begin{proof}
Assume $\psi \maps \pri{L} \to L$ is inverse to $\tan(\Ph)=\sPh$ in  $\cVect$. One constructs, using $\psi$ and the structure maps $\sPhi_m$, an inverse $\Psi \maps \S(L') \to \S(L)$ recursively in the usual way (e.g., see the formulas in the proof of \cite[Prop.\ 7.5]{Markl-book}).
Since $\psi$ and $\Phi$ are compatible with the filtrations on $L'$ and $\S(L)$, $\Psi$ is a morphism in $\cLie$. The converse direction is obvious.
\end{proof}

\subsubsection{Finite products in $\cLie$} \label{sec:product}
The product of  $\cinf$-algebras $(L,d,Q)$ and $(L',d',Q') \in \cLie$ is denoted by $(L\times L', d_\times,Q_\times)$, where $(L\times L',d_\times)$ is the product of complete cochain complexes, and $Q_\times$ is the degree one codifferential on $\S(L \times L')$ whose structure maps are
\begin{equation} \label{eq:product1}
(Q_\times)^1_{k} \bigl( (x_1, x'_1), (x_2,x'_2), \ldots, (x_k,x'_k) \bigr):= \Bigl(\sQ_{k}(x_1,x_2,\ldots,x_k) ,\spQ_{k}(x'_1,x'_2,\ldots,x'_k) \Bigr).
\end{equation}
The usual projections $\pr \maps L \times L' \to L$, $\pr' \maps L \times L' \to L'$ lift to strict $L_\infty$-epimorphisms
\[
(L,d,Q) \xleftarrow{\Pr} (L \times L', d_\times,Q_\times) \xto{\Pr'} (L',d',Q')
\]
We will frequently identify the (counital) cocommutative coalgebra $S(L \times L')$ with the categorical product $S(L)\tensor_\kk S(L')$, and this will be consistent with our convention for writing $\infty$-morphisms as dg coalgebra morphisms. In particular, given $\infty$-morphisms $\Ph_1 \maps (L_1,d^{L_1},Q^{L_1}) \to (L'_1,d^{L'_1},Q^{L'_1})$ and $\Ph_2 \maps (L_2,d^{L_2},Q^{L_2}) \to (L'_2,d^{L'_2},Q^{L'_2})$, we write
\begin{equation*} %\label{eq:product2}
\Ph_1 \tensor \Ph_2 \maps (L_1 \times L_2, d_\times Q_{\times}) \to (L'_1 \times L'_2, d'_\times,Q'_{\times})
\end{equation*}
for the unique morphism in $\cLie$ which satisfies the usual universal property. The next proposition is elementary. We will use it in Sec.\ \ref{sec:qinverse-Lie} to analyze acyclic fibrations in $\cLie$. 
\begin{proposition}\label{prop:Linf-inc}
Let $(L,d,Q)$ and $(L',d',Q') \in \cLie$. The inclusion map of
complete vector spaces $L \emb L \times L'$ extends to a strict morphism
$\LdQ{} \to (L\times L', d_\times,Q_\times)$.
\end{proposition}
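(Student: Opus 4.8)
The plan is to produce the desired strict $L_\infty$-morphism by specifying its linear term and then checking the (elementary) compatibility conditions directly. Let $\iota \maps L \to L \dsum L'$, $\iota(x) := (x,0)$, be the inclusion of complete filtered vector spaces, and let $\Phi$ be the strict morphism whose only nonvanishing component is $\sPh = \iota$, in the sense of \eqref{eq:strict}. First I would record that $\iota$ is a morphism in $\cCh$: since $d_\dsum(x,x') = (dx, d'x')$, we have $d_\dsum \iota(x) = (dx,0) = \iota(dx)$, so $\iota$ is a chain map; and filtration compatibility is immediate from $\cF_n(L \dsum L') = \cF_n L \dsum \cF_n L'$, which gives $\iota(\cF_n L) = \cF_n L \dsum 0 \sse \cF_n(L \dsum L')$. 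As $\Phi$ is strict, this is precisely condition \eqref{eq:fmorph}.

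The key step is to verify that $\iota$ intertwines the brackets, which is the only remaining requirement for $\Phi$ to be a strict $L_\infty$-morphism: for a strict $\Phi$ the identity $\Phi Q = Q_\dsum \Phi$ collapses, via \eqref{eq:comp}, to the single family of equations $\iota \cc \sQ_m = (Q_\dsum)^1_m \cc \iota^{\tensor m}$ on $\S^m(L)$, $m \geq 1$. Using the formula \eqref{eq:product1} for the structure maps of the product, I compute, for $x_1, \ldots, x_m \in L$,
\[
(Q_\dsum)^1_m\bigl(\iota(x_1) \cdots \iota(x_m)\bigr) = \bigl(\sQ_m(x_1, \ldots, x_m),\ \spQ_m(0, \ldots, 0)\bigr).
\]
Since each $\spQ_m$ is multilinear and $m \geq 1$ (for $m=1$ this reads $\spQ_1(0) = d'(0) = 0$), the second component vanishes, so the right-hand side equals $\bigl(\sQ_m(x_1,\ldots,x_m),\, 0\bigr) = \iota\bigl(\sQ_m(x_1, \ldots, x_m)\bigr)$, which is exactly the desired intertwining identity.

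There is no genuine obstacle, consistent with the remark that the statement is elementary: the single point meriting care is the reduction of the strict-morphism compatibility to the bracket-intertwining identity, after which the identity follows purely from multilinearity of the brackets $\spQ_m$ evaluated on tuples whose second entries vanish. No completion or higher-coherence argument beyond the direct-sum splitting of $\cF_n(L \dsum L')$ is needed.
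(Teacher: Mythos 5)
Your proof is correct: the paper states this proposition without proof, remarking only that it is elementary, and your argument supplies exactly the intended direct verification (reduction of $\Phi Q = Q_\dsum \Phi$ for a strict morphism to the bracket-intertwining identity via \eqref{eq:comp}, then evaluation of \eqref{eq:product1} on tuples with vanishing second entries, together with the evident filtration compatibility from $\cF_n(L\dsum L') = \cF_nL \dsum \cF_nL'$). Nothing is missing.
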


\subsubsection{Nilpotent $\sinf$-algebras} \label{sec:Nilpot} 
The main results in Sec.\ \ref{sec:sMC-htpy} concern the compatibility of
the homotopy theory for $\cinf$-algebras and $\infty$-morphisms with the simplicial Maurer-Cartan functor (defined below in Sec.\ \ref{sec:sMC}). The proofs crucially rely on  E.\ Getzler's 
previous work \cite{Getzler} involving the Maurer Cartan theory for nilpotent $L_\infty$-algebras and \textit{strict} morphisms. The purpose of the present section  is to clearly spell out the relationships between the category $\cLie$, and the category of nilpotent $\sinf$-algebras with strict morphisms between them.

Let $\LdQ{} \in \SLie$.  % Remark \ref{rmk:disc-filt} implies that $(L,d)$ admits at least one filtration compatible with the $Q$.
Following A.\ Berglund \cite{Berglund}, let $\{\Ga_k L\}_{k \geq 1}$ denote the level-wise intersection of all filtrations on $L$ compatible with the $\sinf$-structure $Q$. Then  $\{\Ga_kL \}$ is also a compatible filtration called the \df{lower central series} of $(L,d,Q)$. The explicit formula \cite[Def.\ 6.10]{Markl-book} for $\Ga_k L$ is $\Ga_1 L := L$, and for $k \geq 2$
\begin{equation} \label{eq:LC_series}
\Ga_k L: = \sum_{m \geq 2} ~ \sum_{\substack{ i_1 + i_2 + \cdots + i_m \geq k \\ i_1,i_2,\ldots,i_m < k}} 
\sQ_{m}\bigl(\Ga_{i_1}L \tensor \Ga_{i_2}L\tensor \cdots \tensor \Ga_{i_m}L \bigr) 
\end{equation}
If there exists $N \geq 1$ such that $\Ga_N L =0$, then we say $\LdQ{}$ is \df{nilpotent}.
Remark \ref{rmk:disc-filt} implies that every abelian $\cinf$-algebra is nilpotent. More generally, the $L_\infty$-algebras considered by E.\ Getzler in \cite{Getzler} are nilpotent in this sense.

\paragraph{Weak versus strict morphisms of nilpotent $\sinf$-algebras.}
In \cite{Band,Berglund,Getzler,Yalin}, the morphisms between nilpotent $L_\infty$-algebras are considered without requiring compatibility with the lower central series filtration. This is not an issue because the authors of \cite{Band,Berglund,Getzler,Yalin} only consider \textit{strict} $L_\infty$-morphisms. Indeed, the explicit description \eqref{eq:LC_series} of the lower central series
makes the following proposition evident:

\begin{proposition} \label{prop:nil-strict}
Let $(L,d,Q)$ and $(L',d',Q')$ be nilpotent $\sinf$-algebras. Let $\Ph=\sPhi_1 \maps (L,d,Q) \to (L',d',Q')$ be a strict morphism in $\SLie$. Then $\Ph$ is compatible with the lower central series on $L$ and $L'$.
\end{proposition}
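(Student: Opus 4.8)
The plan is to show by induction on $k \geq 1$ that the linear term $\sPhi_1$ carries the $k$-th term of the lower central series of $L$ into that of $L'$, that is $\sPhi_1(\Ga_k L) \sse \Ga_k L'$. This is precisely what is needed: because $\Ph$ is strict, its structure map $\Ph^1$ vanishes on $\S^m(L)$ for $m \geq 2$, so the filtered-morphism condition \eqref{eq:fmorph} for the lower central series filtration reduces to its single nontrivial ($m=1$) instance $\sPhi_1(\Ga_k L) \sse \Ga_k L'$. Granting the inductive claim, $\Ph$ is therefore a filtered morphism between the nilpotent $L_\infty$-algebras $L$ and $L'$, hence a morphism in $\NLie$.

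The engine of the induction is the intertwining relation for strict morphisms recalled just above, which in the present notation reads $\sPhi_1 \sQ_m(a_1, \ldots, a_m) = (Q')^1_m(\sPhi_1(a_1), \ldots, \sPhi_1(a_m))$ for all $m \geq 1$. The base case $k=1$ is immediate from $\Ga_1 L = L$ and $\Ga_1 L' = L'$. For the inductive step I would assume $\sPhi_1(\Ga_j L) \sse \Ga_j L'$ for every $j < k$ and apply $\sPhi_1$ to a typical generator $\sQ_m(a_1, \ldots, a_m)$ of $\Ga_k L$ coming from the explicit formula \eqref{eq:LC_series}, where $m \geq 2$, each $a_l \in \Ga_{i_l} L$, $\sum_l i_l \geq k$, and every $i_l < k$. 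The intertwining relation rewrites its image as $(Q')^1_m(\sPhi_1(a_1), \ldots, \sPhi_1(a_m))$, and since each $i_l < k$, the inductive hypothesis places $\sPhi_1(a_l) \in \Ga_{i_l} L'$. This exhibits $\sPhi_1(\sQ_m(a_1, \ldots, a_m))$ as a generator of $\Ga_k L'$ of exactly the same combinatorial type ($m \geq 2$, $\sum_l i_l \geq k$, $i_l < k$), so it lies in $\Ga_k L'$; linearity of $\sPhi_1$ together with the fact that $\Ga_k L$ is the sum of such terms completes the step.

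I do not anticipate a genuine obstacle: the argument is a direct induction powered entirely by the recursive definition \eqref{eq:LC_series} of $\Ga_\bullet$ together with the bracket-preservation property of strict morphisms. The one point deserving attention is purely bookkeeping, namely verifying that the index constraints defining the generators of $\Ga_k L$ transfer verbatim to generators of $\Ga_k L'$ --- which holds because $\sPhi_1$ alters neither the arity $m$ nor the weights $i_l$ attached to a bracket expression. It is worth emphasizing that strictness is exactly what makes the statement work cleanly: for a non-strict morphism one would additionally have to control the higher components $\Ph^1_m$ with $m \geq 2$ against the full condition \eqref{eq:fmorph}, which is genuinely more delicate and in general fails.
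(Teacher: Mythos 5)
Your argument is correct and is essentially the paper's own proof: the paper likewise invokes the strictness identity $Q^{\prime 1}_m \circ (\sPhi_1)^{\otimes m} = \sPhi_1 \sQ_m$ together with the explicit recursion \eqref{eq:LC_series} for the lower central series to conclude $\sPhi_1(\Ga_k L) \sse \Ga_k L'$. You merely spell out the induction that the paper leaves implicit, and your remark that strictness trivializes the $m \geq 2$ instances of \eqref{eq:fmorph} is the right justification for why this suffices.
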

We denote by $\nilstLie \sse \Lie$ the category whose objects are nilpotent $\sinf$-algebras and whose morphisms are {strict} $\infty$-morphisms. Every nilpotent $\sinf$-algebra equipped with its lower central series is a $\cinf$-algebra. This gives us a functor, thanks to Prop.\ \ref{prop:nil-strict}:
\begin{equation*} %\label{eq:nilstr}
\nilstLie \to \cLie.
\end{equation*}
Throughout, we will frequently identify $\nilstLie$ with its image under this inclusion functor.

\subsubsection{Bounded $\cinf$-algebras} \label{sec:bnd}
All of the nilpotent $\sinf$-algebras considered in this paper arise as $\cinf$-algebras equipped with a specified bounded descending filtration, which is not necessarily the one induced by its lower central series. It will be convenient at times to keep track of the depth of the bound. 

\begin{definition} \label{def:bndfltLie}
A  $\cinf$-algebra $(L,d,Q)$  with filtration $\{ \cF_kL \}_{k\geq 1}$
is \df{bounded at} {\boldmath $N$} \iff $\cF_NL=0.$
\end{definition}

\begin{remark}\label{rmk:bnd-nil}
Any filtered $\sinf$-algebra equipped with a bounded filtration as in Def.\ \ref{def:bndfltLie} is obviously complete and hence a bounded $\cinf$-algebra. Furthermore, any bounded $\cinf$-algebra $(L,d,Q)$ is nilpotent since $\Ga_kL \sse \cF_kL$ for all $k\geq 1$.  % Obviously, an infinite series $\sum^{\infty}_{i=1} x_i$ in $L$ converges with respect to the bounded filtration $\{ \cF_kL \}_{k\geq 1}$ if and only if it converges with respect to the lower central series $\{ \Ga_k L \}_{k\geq 1}$ if and only if the series is finite.
\end{remark}
Denote by $\bndfiltLie$ the full subcategory of $\cLie$ whose objects are bounded 
 $\cinf$-algebras. Remark \ref{rmk:bnd-nil} implies that we have a forgetful functor   
\begin{equation} \label{eq:bnd-nil}
\bndfltstLie \xto{\jmath_{\str}} \nilstLie
\end{equation}
from the category of bounded $\cinf$-algebras and \textit{strict} $\infty$-morphisms. Note that abelian $\sinf$-algebras equipped with the discrete filtration as in Remark \ref{rmk:disc-filt} form a full subcategory of $\bndfiltLie$.

Here is our main example of a bounded $\cinf$-algebra.
Let $(L,d,Q) \in \FLie$ be a filtered $\sinf$-algebra. For each $n\geq 1$, the filtration on $L$ induces a filtration on the quotient vector space $\qf{L}{n}$
\[
\qf{L}{n}=\ov{\cF}_1 \bigl(\qf{L}{n} \bigr)  \sss \ov{\cF}_2 \bigl(\qf{L}{n} \bigr) \sss \cdots,
\]
and the canonical surjections $p_{\pp{n}} \maps L \to \qf{L}{n}$ and $\bp_{\pp{n}} \maps \qf{L}{n+1} \to \qf{L}{n}$ from Sec.\ \ref{sec:fVect-note} upgrade to morphisms between filtered vector spaces. 
We define degree $1$ linear maps $d_{\pp{n}} \maps \qf{L}{n} \to \qf{L}{n}$, and $(\qq{Q}{n})^1_k \maps \S^k(\qf{L}{n}) \to \qf{L}{n}$ for $k \geq 2$:
\begin{equation} \label{eq:quot-Linf}
d_{\pp{n}}(\ba{x}) := dx, \quad (\qq{Q}{n})^1_k(\bx_1,\bx_2, \ldots, \bx_k) := \sQ_k(x_1,x_2,\ldots,x_k),
\end{equation}
where $\bx=\qq{p}{n}(x)$ and $\bx_i=\qq{p}{n}(x_i)$ for all $i=1,\ldots,k$. 

The statements in the next proposition follow directly from the fact that the $\sinf$-structures
and morphisms in $\FLie$ are compatible with the filtrations in the sense of Def.\ \ref{def:filtcompLinf} and Eq.\ \ref{eq:fmorph}, respectively.

\begin{proposition}\label{prop:quot-filt}
Let $\LdQ{}$ be a filtered $\sinf$-algebra with filtration $\{\cF_kL\}_{k \geq 1}$.
\begin{enumerate}
\item \label{item:propqf1} For each $n\geq 1$, the linear maps \eqref{eq:quot-Linf} give the quotient $\qf{L}{n}$ the structure of a $\cinf$-algebra $(\qf{L}{n}, \qq{d}{n}, \qq{Q}{n})$ bounded at $N=n$.

\item\label{item:propqf2} The canonical surjections from Sec.\ \ref{sec:fVect-note} induce strict morphisms 
\[
\qq{p}{n}  \maps \LdQ{} \to (\qf{L}{n}, \qq{d}{n}, \qq{Q}{n}), \quad
\qq{\bp}{n}\maps(\qf{L}{n+1}, \qq{d}{n+1}, \qq{Q}{n+1}) \to (\qf{L}{n}, \qq{d}{n}, \qq{Q}{n})
\]
in $\FLie$ and $\bndfltstLie$, respectively.

\item\label{item:propqf3} 
For each $n \geq 1$, the inclusion of the subcomplex 
\[
\ker \qq{\bp}{n} = (\cF_{n}L/\cF_{n+1}L,\, \qq{d}{n+1}) ~ \sse ~ (L/\cF_{n+1}L,\qq{d}{n+1}) 
\]
extends to a morphism $(\cF_{n}L/\cF_{n+1}L,\, \qq{d}{n+1},0) \emb (\qf{L}{n+1}, \qq{d}{n+1}, \qq{Q}{n+1})$ in $\bndfltstLie$
% \[
% (\cF_{n-1}L/\cF_{n}L,\, \qq{d}{n}) \emb (\qf{L}{n}, \qq{d}{n}, \qq{Q}{n})
% \]
which embeds $\ker \qq{\bp}{n}$ as an abelian $\sinf$-subalgebra.% of $(\qf{L}{n}, \qq{d}{n}, \qq{Q}{n})$.

\item\label{item:propqf4} If $\Ph \maps \LdQ{} \to \LdQ{\prime}$ is a morphism in $\FLie$, then for each $n \geq 1$, the linear maps
\[
\begin{split}
(\qq{\Ph}{n})^1_{k \geq 1} \maps \S^{k \geq 1}(\qf{L}{n}) \to \qf{L'}{n}, \qquad
(\qq{\Ph}{n})^1_k(\bx_1,\bx_2, \ldots, \bx_k):=\Ph^1_k(x_1,x_2, \ldots, x_k), 
\end{split}
\]
induce a unique morphism $\qq{\Ph}{n} \maps (\qf{L}{n}, \qq{d}{n}, \qq{Q}{n}) \to (\qf{L'}{n}, \qq{d'}{n}, \qq{Q'}{n})$
% \begin{equation*} %\label{eq:quot-fmorph}
% \qq{\Ph}{n} \maps (\qf{L}{n}, \qq{d}{n}, \qq{Q}{n}) \to (\qf{L'}{n}, \qq{d'}{n}, \qq{Q'}{n})
% \end{equation*}
of $\cinf$-algebras which satisfies the equality $\qq{p'}{n} \Phi = \qq{\Ph}{n}\qq{p}{n}$.
\end{enumerate}
\end{proposition}

\subsubsection{Towers of  $\sinf$-algebras} \label{sec:Lietower}
Statement \ref{item:propqf4} of Prop.\ \ref{prop:quot-filt} above implies that
the functor $\ttow \maps \fVect \to \tVect$ defined in \eqref{eq:towfunc} extends to a functor
$\ttow \maps \FLie \to \tbdfltLie.$
% \begin{equation*} %\label{eq:Lietowfunc}
% \ttow \maps \FLie \to \tbdfltLie.
% \end{equation*}
Moreover, given  $(L,d,Q) \in \FLie$, the level-wise $\sinf$-structures on the the corresponding tower $\ttow(L,d,Q)$ induces a unique 
$\sinf$-structure on the completion $\wh{L}=\plim_n \qf{L}{n}$ such that the 
linear map $p_L \maps L \to \wh{L}$ from  Sec.\ \ref{sec:fVect-note} extends to a strict morphism of filtered $\sinf$-algebras $p_L \maps (L,d,Q) \to (\wh{L}, \cd, \wh{Q})$.
% \[
% p_L \maps (L,d,Q) \to (\wh{L}, \cd, \wh{Q}).
% \]
It is straightforward to check that this assignment of a filtered $\sinf$-algebra to its completion defines a functor
\begin{equation} \label{eq:Liecompfunc}
\plim\ttow \maps \FLie \to \cLie.
\end{equation}

\subsubsection{Tensoring with cdgas} \label{sec:cdgas}
% Here we recall extensions of the results from Sec.\ \ref{sec:ctensor} to completed tensor products between $\cinf$-algebras and commutative dg algebras. 
Let $\cdga$ denote the category whose objects are unital commutative dg $\kk$-algebras and whose morphisms are unital dg algebra homomorphisms. Given $(L,d,Q) \in \cLie$ and $(B,\del) \in \cdga$, we denote
by $(L \tensor B, d_B, Q_B)$ the $\sinf$-algebra whose underlying cochain complex is the usual tensor product of complexes $(L \tensor B, d_B)$, where $d_B:=d \tensor \id + \id \tensor \del$,
and whose higher brackets are defined as:
\begin{equation*} %\label{eq:QB}
(Q_B)^1_k \bigl(x_1 \tensor b_1, \ldots, x_k \tensor b_k \bigr):= (-1)^{\varepsilon} \sQ_k(x_1,\ldots,x_k) \tensor b_1b_2 \cdots b_k.
\end{equation*} 
Above $\varepsilon :=  \sum_{1 \leq i < j \leq k} \deg{b_i}\deg{x_j}$ is the usual Koszul sign.
Similarly, if $\Phi \maps (L, d,Q) \to (L',d',Q')$ is a morphism in $\SLie$, then the maps $(\Phi_B)^1_k\maps \S^{k}(L \tensor B) \to L' \tensor B$ defined as
\begin{equation} \label{eq:FB}
(\Phi_{B})^1_k\bigl(x_1 \tensor b_1, \ldots, x_k \tensor b_k \bigr):= (-1)^{\varepsilon} \sPhi_k(x_1,\ldots,x_k) \tensor b_1b_2 \cdots b_k.
\end{equation} 
assemble together to give a $\infty$-morphism $\Phi_{B} \maps (L\tensor B, d_B, Q_{B}) \to (L' \tensor B, d'_B, Q^{\prime}_{B})$.

The vector space $L \tensor B$ is equipped with the induced filtration from Sec.\ \ref{sec:ctensor}, and it is easy to see that it is compatible with the $\sinf$-structure $Q_{B}$. Hence, $(L \tensor B, d_B, Q_{B}) \in \FLie$, and so for any $B \in \cdga$ we obtain a functor:
$-\ctensor B \maps \cLie \to \cLie$
% \begin{equation*} %\label{eq:cLie-cdga}
% -\ctensor B \maps \cLie \to \cLie
% \end{equation*}
which sends $(L,d,Q)$ to the completion $(L \ctensor B, \wh{d}_B,\wh{Q}_{B})$ % of the filtered $\sinf$-algebra $(L\tensor B, d_B, Q_{B})$
via the functor \eqref{eq:Liecompfunc}.
The proof of the next proposition follows from the definition of the filtration on the tensor product.
\begin{proposition}\label{prop:nilbnd-cdga}
Let $(B,\del) \in \cdga$. If $(L,d,Q) \in \bdfltLie$ is bounded at $N$, then the filtered $\sinf$-algebra $(L \tensor B, d_B, Q_{B})$ is also bounded at $N$, and therefore
$(L \ctensor B, \cd_B, \cQ_B) \cong (L \tensor B, d_B, Q_B)$.
\end{proposition}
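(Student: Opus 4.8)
The plan is to reduce everything to the single observation that boundedness of the filtration is inherited by the ordinary tensor product, after which completeness makes the completion map an isomorphism that automatically respects the $L_\infty$-structure.

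First I would recall from Sec.\ \ref{sec:ctensor} that $L \tensor B$ carries the induced filtration $\cF_n(L \tensor B) = \cF_n L \tensor B$. Since $(L,d,Q)$ is bounded filtered at $N$, we have $\cF_N L = 0$, whence $\cF_N(L \tensor B) = \cF_N L \tensor B = 0$. This is precisely the assertion that $(L \tensor B, d_B, Q_B)$ is bounded filtered at $N$, giving the first claim.

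Next I would use that a bounded filtered $L_\infty$-algebra is automatically complete, as noted in Remark \ref{rmk:bnd-nil}: every series convergent with respect to a bounded filtration is finite. Equivalently, since $\cF_n(L \tensor B) = 0$ for all $n \geq N$, the transition maps $\bp_{(n)}$ of the associated tower are identities once $n \geq N$, so the tower stabilizes and the canonical map
\[
p_{L \tensor B} \maps L \tensor B \xto{\cong} L \ctensor B = \wh{L \tensor B}
\]
is an isomorphism in $\fVect$. I would then invoke the construction of the completion functor \eqref{eq:Liecompfunc} recalled in Sec.\ \ref{sec:Lietower}, by which $L \ctensor B$ acquires the $L_\infty$-structure $(\cd_B, \cQ_B)$ precisely so that $p_{L \tensor B}$ upgrades to a strict filtered $L_\infty$-morphism $(L \tensor B, d_B, Q_B) \to (L \ctensor B, \cd_B, \cQ_B)$. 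As its linear term is an isomorphism of filtered graded vector spaces, the characterization of isomorphisms in $\cLie$ established just before Sec.\ \ref{sec:product} shows this strict morphism is an isomorphism in $\cLie$, yielding $(L \ctensor B, \cd_B, \cQ_B) \cong (L \tensor B, d_B, Q_B)$.

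I do not expect a genuine obstacle here; the one point meriting care is that it is \emph{boundedness}, rather than mere completeness of $L$, that forces $L \tensor B$ to already be complete, so that the completion functor performs nothing nontrivial. For a complete but unbounded $L$ the induced filtration on $L \tensor B$ need not be complete, which is the very reason $\ctensor$ is introduced in the first place; the hypothesis $\cF_N L = 0$ is exactly what collapses this distinction and makes the displayed isomorphism hold on the nose.
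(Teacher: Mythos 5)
Your proof is correct and follows exactly the route the paper intends: the paper gives no detailed argument, remarking only that the claim ``follows from the definition of the filtration on the tensor product,'' and your observation that $\cF_N(L\tensor B)=\cF_N L\tensor B=0$ forces the tower to stabilize, making the completion map an isomorphism compatible with the $L_\infty$-structure, is precisely that argument spelled out.
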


\subsection{Maurer--Cartan elements and twisting} \label{sec:MC}
Our reference for this section is \cite[Sec.\ 2]{Enhanced}. Let $(L,d,Q) \in \cLie$. The \df{curvature} $\curv \maps L^0 \to L^1$ is the set-theoretic function 
\begin{equation*} %\label{eq:curv}
\curv(a)=d a + \sum_{m \geq 2} \frac{1}{m!}
\sQ_m(\underbrace{a\cdots a}_{\text{$m$ {\rm times}}}) 
\qquad \forall a \in L^0,
\end{equation*}
which is well-defined due to: (1) the compatibility of the $\sinf$-structure with the filtrations,
(2) the fact that $L^0=\cF_1L^0$, and (3) the fact that $L$ is complete. Elements of the set
$\MC(L):= \{ \alpha \in L^0 ~ \vert ~ \curv(\alpha)=0 \}$ are called the \df{Maurer--Cartan (MC) elements} of $L$. Note that
MC elements of $L$ are elements of \underline{degree 0}. 
Similarly, let
$\Phi \maps (L,d,Q) \to (L',d',Q')$ be a morphism in $\cLie$. Such a morphism induces a function
$\Phi_\ast \maps L^0 \to L^{'0}$: 
\begin{equation}\label{eq:MC_F}
\Phi_\ast(a) := \sum_{m \geq 1} \frac{1}{m!} \sPhi(\underbrace{a\cdots a}_{\text{$m$ {\rm times}}})  \qquad \forall a \in L^0.
\end{equation}
The curvature function satisfies the following useful identities (which are proved in \cite[Prop.\ 2.2]{Enhanced}): 
\begin{equation} \label{eq:bianchi}
d(\curv(a)) + \sum_{m=1}^{\infty} \frac{1}{m!} \sQ_m \bigl(\underbrace{a
  \cdots a}_{\text{$m$ {\rm times}}} \cdot \, \curv(a)\bigr)= 0 \qquad \forall a \in L^0,
\end{equation}
and
\begin{equation} \label{eq:curv-morph}
\curv \big( \Phi_*(a) \big) = 
\sum_{m \ge 0} \frac{1}{m!} \sPhi_{m+1} \bigl( \underbrace{a
  \cdots a}_{\text{$m$ {\rm times}}} \cdot \,  \curv(a) \bigr) \qquad \forall a \in L^0.
\end{equation}

If $\Ph \maps (L,d,Q) \to (L',d',Q')$ is a morphism in $\cLie$, and $\al \in \MC(L)$ is a MC element of $L$, then
Eq.\ \ref{eq:curv-morph} implies that $\Phi_\ast(\al)$ is a MC element of $L'$. Using the composition formulas \eqref{eq:comp}, it is straightforward to check that the assignment $L \mapsto \MC(L)$ gives a functor $\MC \maps \cLie \to \Set.$
% \begin{equation*} %\label{eq:MC-func}
% \MC \maps \cLie \to \Set.
% \end{equation*}

\begin{remark} \label{rmk:MC} 
Eq.\ \ref{eq:bianchi} above is known as the \df{Bianchi identity} \cite{Getzler}. Note that if
$(L,d) \in \Ch$ is a cochain complex considered as an abelian $\sinf$-algebra, then $\MC(L)$ is equal to the subspace of degree 0 cocycles $Z^0(L)$. Finally, it is easy to show that for any $(L,d,Q) \in \cLie$, there is a natural isomorphism $\MC(L) \cong \plim_n \MC(L/\cF_nL)$. See, for example, \cite[Prop.\ 3.17]{Markl-book}.
\end{remark}

\subsubsection{Twisting}
Elements of degree 0 in a $\cinf$-algebra $(L,d,Q)$ can be used to ``twist'' the differential and multi-brackets on $L$. Let $a \in L^0$ be \textit{any element of degree $0$}. We define the degree 1 maps $d^{a} \maps L \to L$, and  $(Q^a)^1_k \maps \S^k(L) \to L$ as
\begin{equation} \label{eq:twist}
\begin{split}
d^{a}(x) &: = dx  + \sum_{k=1}^{\infty} \frac{1}{k!} \sQ_{k+1}(\mtimes{a}{k} x) \qquad \forall x \in L,\\
(Q^{a})^1_m(x_1x_2\cdots x_m) &:=
\sum_{k=0}^{\infty} \frac{1}{k!} \sQ_{k+m}(\mtimes{a}{k} \cdot x_1 x_2\cdots x_m) \qquad \forall x_1,\ldots,x_m \in L.
\end{split}
\end{equation}  
Then the following equalities hold \cite[Prop.\ 2.2]{Enhanced}:
\begin{equation} \label{eq:twist-ids}
\begin{split}
d^{a} \circ d^{a} (x) &=  - \pr_LQ^a(\curv(a) \cdot x) \qquad \forall a \in L^0 \quad  \forall x \in L,\\
\curv(a + b) &= \curv(a) +  d^{a}(b) + \sum_{m=2}^{\infty} \frac{1}{m!} 
(Q^{a})^1_m(\mtimes{b}{m}) \qquad \forall a, b \in L^0.
\end{split}
\end{equation}
In particular, given an MC element $\al \in \MC(L)$,  we can construct a new 
$\cinf$-algebra $(L^\al,d,^\al,Q^\al)$ using the $\al$-twisted structures defined above. 
% \begin{equation*} %\label{eq:twist-Linf}
% (L^\al,d,^\al,Q^\al) \in \cLie,
% \end{equation*}
Here $L^{\alpha}:= L$ as a complete filtered graded vector space. % For a proof of this fact, we refer to \cite[Section 2.4]{VD-thesis}, and \cite[Section 4]{Getzler}.
Morphisms between $\cinf$-algebras can be twisted as well. See, for example,  \cite[Sec.\ 2]{Enhanced}. 

\begin{remark} \label{rmk:exp}
\mbox{}
\begin{enumerate}  
\item In the  proof of \cite[Prop.\ 2.2]{Enhanced}), the following fact is used 
for verifying some of the above identities concerning curvature and twisting:
For any $a \in L^0$, the expression $\exp(a) -1$ is a well-defined element of the usual completion of $\S(L)$ defined by the corresponding formal power series. By extending $Q$ in the natural way, a straightforward calculation shows that $Q \bigl( \exp( a) - 1 \bigr) = \exp(a) \curv(a)$,
% \begin{equation*} %\label{eq:exp1}
% Q \bigl( \exp( a) - 1 \bigr) = \exp(a) \curv(a)
% \end{equation*}  
and hence
\begin{equation} \label{eq:exp2}
\curv(a) = \pr_{L} \circ Q \bigl( \exp( a) - 1 \bigr).
\end{equation}

\item Similarly, if $\Phi \maps (L,d,Q) \to (L',d',Q')$ is a morphism in $\cLie$, then
we may extend $\Phi$ to the completions of $\S( L)$ and $\S(L')$ by formal power series, and  
a straightforward calculation gives us the equality 
\begin{equation} \label{eq:exp3}
\Phi \bigl( \exp( a) - 1 \bigr) = \exp( \Phi_\ast(a)) - 1.
\end{equation}  

\item Suppose $(L,d,Q)$ and $(L',d',Q')$ are complete $L_\infty$-algebras, and $\Phi \maps \S(L) \to \S(L')$ is a coalgebra morphism that is compatible with the filtrations, but \textit{not necessarily} compatible with the differentials. Then we note that the function $\Phi_\ast$ defined in Eq.\ \ref{eq:MC_F} is still well-defined, and the equality \eqref{eq:exp3} still holds. 
  
\end{enumerate}
\end{remark}

\subsubsection{Maurer-Cartan elements of bounded  $\cinf$-algebras}
Suppose $\LdQ{} \in \bdfltLie$ is bounded at $N$ (Def.\ \ref{def:bndfltLie}). 
Then clearly the  codifferential $Q$ is necessarily truncated at $N$, i.e., $\sQ_k =0$ for $k \geq N$. However, the compatibility of the $\sinf$ structure with the filtration also implies a bit more. The following technical results will be used in the proof of
Thm.\ \ref{thm:sMC-exact}.

\begin{lemma} \label{lem:bnd-curv}
Let $\LdQ{} \in \bdfltLie$ be an $\cinf$-algebra that is bounded at $N$.
\begin{enumerate}

\item \label{item:bnd-curv1}
If $a \in L^0$ and $y \in \cF_{N-1}L^0$, then
\begin{equation} \label{eq:bnd-curv}
\curv(a+y) = \curv(a) + dy.
\end{equation}
% \item If $a, b \in L^0$ and $b-a \in \cF_{N-1}L^0$, then
% \[
% \curv(b) - \curv(a) = d(b-a).
% \]
\item \label{item:bnd-curv2}
If $a \in L^0$ and $\curv(a) \in \cF_{N-1}L^{1}$, then $d \curv(a)=0.$
% \begin{equation*} %\label{eq:bnd-bianchi}
% d \curv(a)=0.
% \end{equation*}

\end{enumerate}

\end{lemma}

\begin{proof}
\mbox{}
\begin{enumerate}[leftmargin=15pt]

\item  The identities \eqref{eq:twist-ids} imply that
\begin{equation} \label{eq:lem-bnd1}
\curv(a + y) = \curv(a) + d^{a}(y) + \sum_{m=2}^{\infty} \frac{1}{m!} 
(Q^{a})^1_m(\mtimes{y}{m}).
\end{equation}
Recall $L^0=\cF_1L^0$, therefore for all $k \geq 0$ and $m \geq 2$, we have
\[
\sQ_{k+m}(\mtimes{a}{k} ~ \mtimes{y}{m}) \in \cF_{m(N-1)+k}L \sse \cF_NL =0.
\]
It then follows from the definition of $(Q^{a})^1_m$ in \eqref{eq:twist} that the infinite summation on the right-hand side of \eqref{eq:lem-bnd1} above vanishes. Now consider the remaining term
\[
d^{a}(y) = dy  + \sum_{k=1}^{\infty} \frac{1}{k!} \sQ_{k+1}(\mtimes{a}{k} y).
\]
Using again the hypothesis on the filtration degree of $y$, we deduce that for all $k \geq 1$
\[
\sQ_{k+1}(\mtimes{a}{k} y) \in \cF_{N+k -1}L \sse \cF_NL =0.
\]
Hence, $d^{a}(y) = dy$, and the desired equality then follows.

\item We proceed using the same idea as in the proof of part 1. The hypothesis on the filtration degree of $\curv(a)$, along with the compatibility of $Q$ with the filtration, implies 
that infinite sum on the left-hand side of the Bianchi identity \eqref{eq:bianchi} vanishes.
Therefore, the identity implies that  $d\curv(a)=0$.
\end{enumerate}
\itemqed
\end{proof}

%CLR
\subsection{Obstruction theory: a prelude} \label{sec:obsthy}
Lemma \ref{lem:bnd-curv} gives a quick proof of a fact that is well-known to experts: Every  $\cinf$-algebra $(L,d,Q)$ encodes a canonical ``obstruction theory'' for its Maurer-Cartan elements. This idea (along with an appropriate choice for $L$) is behind many of the familiar obstruction-theoretic arguments used in deformation theory and rational homotopy theory. 

Let $(L,d,Q) \in \cLie$. Recall that the $\sinf$-algebra $\bigl( \qf{L}{2},\qq{d}{2},\qq{Q}{2}\bigr)$ is abelian and is just the  
the cochain complex $(\qf{L}{2},\qq{d}{2})$. Hence, the set of MC elements in $\qf{L}{2}$ is the subspace of degree 0 cocycles $Z^0(\qf{L}{2})$. Let $\be$ be such a 0-cocycle, and let $\qq{p}{2} \maps \LdQ{} \fib (\qf{L}{2},\qq{d}{2})$
in $\cLie$ be the canonical surjection, as usual.
\begin{liftprob*}
Does $\be$ lift through $\qq{p}{2}$ to a Maurer-Cartan element $\al \in \MC(L)$?
\end{liftprob*}

The next proposition -- which we claim no originality for -- characterizes the existence and uniqueness of such Maurer-Cartan elements $\al \in \MC(L)$ % that lift a 0-cocycle in $\qf{L}{2}$
via the  % the cohomology of the
associate graded cochain complex of $(L,d)$. In Sec.\ \ref{sec:obstruct}, this result is extended considerably to characterize a general lifting problem. 

\begin{proposition} \label{prop:obstruct}
Let $n \geq 2$. Let $\be \in \MC(\qf{L}{n-1})$ be a Maurer-Cartan element of the $\sinf$-algebra
$(\qf{L}{n-1},\qq{d}{n-1},\qq{Q}{n-1})$, and suppose $a \in (\qf{L}{n})^0$ is any degree 0 vector satisfying 
\[
\qpp{n-1}(a)=\be \in (\qf{L}{n-1})^0. 
\]
Then:
\begin{enumerate}[leftmargin=15pt]

\item The element $\bcurv{n}(a)$ is a 1-cocycle in the subcomplex 
$\ker \qpp{n-1}:=(\cF_{n-1}L/\cF_nL, \qq{d}{n})$, where $\bcurv{n}$ is the curvature function of the $\sinf$-algebra $(\qf{L}{n},\qq{d}{n},\qq{Q}{n})$.

\item The class 
\[
\bigl [\bcurv{n}( a ) \bigr] \in H^1(\cF_{n-1}L/\cF_nL)
\]
is independent of the choice of $a \in (\qf{L}{n})^0$ lifting the degree 0 vector $\be$.

\item There exists a Maurer-Cartan element $\al \in \MC(\qf{L}{n})$  lifting $\be \in \MC(\qf{L}{n-1})$ through the function $$\bp_{(n-1) \ast} \maps \MC(\qf{L}{n}) \to \MC(\qf{L}{n-1})$$
if and only if the class $[\bcurv{n}( a)]$ is trivial.

\item The fiber $\qqa{\bp}{n-1}^{\, \, -1}\bigl(\{\be\}\bigr) \sse \MC(\qf{L}{n})$ is either empty or 
a torsor over the abelian group 
\[
\MC(\cF_{n-1}L/\cF_nL)= Z^0(\cF_{n-1}L/\cF_nL).
\]
\end{enumerate}
\end{proposition}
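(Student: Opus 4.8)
The plan is to reduce all four statements to the two identities of Lemma~\ref{lem:bnd-curv}, applied to the quotient $L_\infty$-algebra $(\qf{L}{n},\qq{d}{n},\qq{Q}{n})$, which is bounded filtered at $N=n$ by Prop.~\ref{prop:quot-filt}(1). The crucial piece of bookkeeping is that for this algebra the top nonzero layer of the filtration, $\cF_{N-1}=\cF_{n-1}L/\cF_nL$, is exactly the subcomplex $\ker\qpp{n-1}$, where $\qpp{n-1}\maps \qf{L}{n}\to\qf{L}{n-1}$ is the strict $L_\infty$-epimorphism of Prop.~\ref{prop:quot-filt}(2). Strictness is used twice: first, $\qpp{n-1}$ acts as its own linear term on degree $0$ vectors, so the induced map on Maurer--Cartan sets satisfies $\qqa{\bp}{n-1}=\qpp{n-1}$ on $(\qf{L}{n})^0$; second, by the curvature-morphism identity \eqref{eq:curv-morph} it intertwines curvatures, $\bcurv{n-1}\bigl(\qpp{n-1}(x)\bigr)=\qpp{n-1}\bigl(\bcurv{n}(x)\bigr)$.

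For part (1), since $\qpp{n-1}(a)=\be$ and $\be$ is Maurer--Cartan, the intertwining identity gives $\qpp{n-1}\bigl(\bcurv{n}(a)\bigr)=\bcurv{n-1}(\be)=0$, so $\bcurv{n}(a)\in\ker\qpp{n-1}=\cF_{n-1}L/\cF_nL$. As it lands in the top filtration layer $\cF_{N-1}(\qf{L}{n})^1$, Lemma~\ref{lem:bnd-curv}(2) yields $\qq{d}{n}\bcurv{n}(a)=0$, so $\bcurv{n}(a)$ is a $1$-cocycle of the subcomplex. For part (2), any two lifts $a,a'$ of $\be$ differ by $y:=a-a'\in(\cF_{n-1}L/\cF_nL)^0=\cF_{N-1}(\qf{L}{n})^0$, and Lemma~\ref{lem:bnd-curv}(1) gives $\bcurv{n}(a)=\bcurv{n}(a')+\qq{d}{n}y$; hence the two curvatures differ by a coboundary and define the same class in $H^1(\cF_{n-1}L/\cF_nL)$.

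Parts (3) and (4) follow by the same linearization. If a lift $\al\in\MC(\qf{L}{n})$ exists, it is itself an admissible choice of $a$ with $\bcurv{n}(\al)=0$, forcing the class to vanish; conversely, if $\bcurv{n}(a)=\qq{d}{n}y$ for $y$ in the top layer, then $\al:=a-y$ still satisfies $\qpp{n-1}(\al)=\be$ (because $y\in\ker\qpp{n-1}$) and, by Lemma~\ref{lem:bnd-curv}(1), $\bcurv{n}(\al)=\bcurv{n}(a)-\qq{d}{n}y=0$, giving the desired Maurer--Cartan lift. For part (4), Prop.~\ref{prop:quot-filt}(3) makes $\ker\qpp{n-1}$ an abelian sub-$L_\infty$-algebra, so $\MC(\cF_{n-1}L/\cF_nL)=Z^0(\cF_{n-1}L/\cF_nL)$. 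The additive action $\al\mapsto\al+z$ of a cocycle $z$ on the fiber preserves the Maurer--Cartan condition (Lemma~\ref{lem:bnd-curv}(1): $\bcurv{n}(\al+z)=\bcurv{n}(\al)+\qq{d}{n}z=0$) and the fiber condition (since $z\in\ker\qpp{n-1}$), and is manifestly free; transitivity is the computation of part (2), as $\al,\al'$ in the fiber give $z:=\al-\al'$ with $0=\bcurv{n}(\al)=\bcurv{n}(\al')+\qq{d}{n}z$, whence $z\in Z^0$.

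The arguments are essentially formal once Lemma~\ref{lem:bnd-curv} is in hand, so the single step I would be most careful about is the identification in part (1): that $\bcurv{n}(a)$ lies simultaneously in $\ker\qpp{n-1}$ and in the top filtration layer where the curvature becomes affine in its argument. This is precisely the coincidence $\ker\qpp{n-1}=\cF_{N-1}(\qf{L}{n})$ for an algebra bounded filtered at $N=n$, and it is exactly what makes both the cocycle property (via part (2) of the lemma) and the affine behavior under $\cF_{N-1}$-translation (via part (1) of the lemma) available at the same time.
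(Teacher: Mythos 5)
Your proposal is correct and follows essentially the same route as the paper's own proof: both reduce all four statements to Lemma~\ref{lem:bnd-curv} applied to the quotient $(\qf{L}{n},\qq{d}{n},\qq{Q}{n})$, which is bounded filtered at $N=n$, using the strictness of $\qpp{n-1}$ to intertwine curvatures and the identification $\ker\qpp{n-1}=\cF_{n-1}L/\cF_nL$ to place everything in the top filtration layer where the curvature is affine. The only differences are cosmetic (you cite \eqref{eq:curv-morph} explicitly for the intertwining and spell out freeness and transitivity of the torsor action in part (4), which the paper leaves more implicit).
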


\begin{proof}
\mbox{}
\begin{enumerate}[leftmargin=15pt]
\item Since $\qpp{n-1} \maps (\qf{L}{n},\qq{d}{n},\qq{Q}{n})  \to (\qf{L}{n-1},\qq{d}{n-1},\qq{Q}{n-1})$ is a strict $\infty$-morphism, we have $\qpp{n-1}\bigl(\bcurv{n}(a) \bigr) = \bcurv{n-1}\bigl(\qpp{n-1}(a)\bigr) =0$.
Therefore, $\bcurv{n}(a) \in \cF_{n-1}L/\cF_nL$. Proposition \ref{prop:quot-filt} implies that
$(\qf{L}{n},\qq{d}{n},\qq{Q}{n})$ is bounded filtered at $n$, and since the element $\bcurv{n}(a)$ lies in filtration degree $n-1$, it follows from Lemma \ref{lem:bnd-curv} that $\qq{d}{n} \bcurv{n}(a) =0$.

\item Suppose $a' \in (\qf{L}{n})^0$ is another element satisfying $\qpp{n-1}(a')=\be$.
Then the vector $\eta:= a' - a \in (\cF_{n-1}L/\cF_{n}L)^0$ has filtration degree $n-1$, and so 
Lemma \ref{lem:bnd-curv} implies that
\[
\bcurv{n}(a') = \bcurv{n}(a + \eta) = \bcurv{n}(a) + \qq{d}{n}\eta.
\]

\item If there exists $\al \in \MC(\qf{L}{n})$ lifting $\be$, then statement (2) implies that
the obstruction class is trivial. Conversely, suppose there  exists $\eta \in (\cF_{n-1}L/\cF_{n}L)^0$ such that $\bcurv{n}(a) = \qq{d}{n} \eta$. Let $\al:= a - \eta$. Then Lemma \ref{lem:bnd-curv} implies that $\bcurv{n}(\al) = \bcurv{n}(a) - \qq{d}{n}\eta =0$.

\item Suppose $\al,\al' \in \MC(\qf{L}{n})$ satisfy $\qqa{\bp}{n-1}(\al)=\qqa{\bp}{n-1}(\al')=\be$.
Since $\qpp{n-1}$ is a strict $\infty$-morphism, it follows that $\eta:= \al' - \al \in 
(\cF_{n-1}L/\cF_{n}L)^0$.  Lemma \ref{lem:bnd-curv} implies that $\bcurv{n}(\al') = \bcurv{n}(\al) + \qq{d}{n}\eta$. Since $\al$ and $\al'$ are Maurer-Cartan, it then follows that $\eta \in Z^0((\cF_{n-1}L/\cF_{n}L)) = \MC (\cF_{n-1}L/\cF_{n}L)$.  Conversely, the same argument implies that $\al':= \al + \eta$ is a Maurer-Cartan element of $\qf{L}{n}$ satisfying $\qqa{\bp}{n-1}(\al') = \be$
whenever $\al \in \MC(\qf{L}{n})$ is, provided  $\eta \in Z^0(\cF_{n-1}L/\cF_{n}L)$. 
\end{enumerate}
\itemqed
\end{proof}

\subsection{The simplicial Maurer-Cartan functor} \label{sec:sMC}
The MC elements of a $\cinf$-algebra are the vertices of a simplicial set. We recall the construction. Let $\Omega_n$ denote the de Rham-Sullivan algebra of polynomial 
differential forms on the geometric simplex $\Delta^n$ with 
coefficients in $\Bbbk$. (See, for example, \cite[Ch.\ 5]{Markl-book}.)  These assemble together 
into a simplicial cdga $\{\Omega_n\}_{n\geq0}$. 
The \df{simplicial Maurer--Cartan functor}
\begin{equation} \label{eq:sMC}
\sMC \maps \cLie \to \sSet
\end{equation}
assigns to each  $\cinf$-algebra $(L,d,Q)$ the simplicial set whose $n$-simplices are 
\begin{equation*} %\label{eq:sMC_obj}
\sMC_n(L) : = \MC \bigl(L \ctensor \Omega_n, \wh{d}_{\Om_n},\wh{Q}_{\Om_n} \bigr)
\end{equation*}
and to each morphism $\Phi \maps (L,d,Q) \to (L',d',Q') \in \cLie$ the simplical map
\begin{equation*} %\label{eq:sMC_mor}
\sMC_n(\Phi):=(\wh{\Phi}_{\Om_n})_\ast \maps \MC(L \ctensor \Omega_n)
\to \MC(L' \ctensor \Omega_n).
\end{equation*}
Above, $(\wh{\Phi}_{\Om_n})_\ast$ is the function obtained from Eq.\ \ref{eq:FB} followed by
Eq.\ \ref{eq:MC_F}. 

% The simplicial Maurer-Cartan set provides a way to extend to $\cinf$-algebras the notion of ``gauge equivalence'' between Maurer-Cartan elements. 
% \begin{definition} \label{def:MCequiv}
% Two Maurer-Cartan elements $\al, \be \in \MC(L)$ in a $\cinf$-algebra $\LdQ{}$ are \df{equivalent} if $[\al]=[\be] \in \pi_0(\sMC(L))$. 
% \end{definition} 

\section{Homotopy theory in $\cLie$} \label{sec:hmpty-Linf}
We first give a fairly explicit proof that $\cLie$ forms a category of fibrant objects.
Then we show that this CFO structure satisfies the additional axioms \eqref{EA1} and \eqref{EA2} from Sec.\ \ref{sec:acyc}. This implies Thm.\ \ref{thm:Linf-qinv}, which we interpret as an analog of Whitehead's Theorem for $\cinf$-algebras.

\subsection{$\cLie$ as a category of fibrant objects} \label{sec:Linf_cfo}
By definition, the tangent complex $(L,d)$ of a $\cinf$-algebra $(L,d,Q)$ is a complete cochain complex. The tangent functor \eqref{eq:tan} upgrades to a functor
\[
\ctan \maps \cLie \to \cCh.
\] 
From the point of view of many applications, lifting the CFO structure on $\cCh$ (Thm.\ \ref{thm:Ch_cfo}) through the tangent functor $\ctan(-)$ provides good notions of weak equivalence\footnote{See, however, the main result of \cite{Schwarz:2020}.} and fibration in $\cLie$.
\begin{definition}\label{def:Lie-weq}
Let $\Phi \maps (L,d,Q) \to (\pri{L},d',\pri{Q})$ be a morphism in $\cLie$.
We say $\Phi$ is a \df{weak equivalence} if $\ctan(\Phi)$ is a weak equivalence in $\cCh$ in the sense of Def.\ \ref{def:chain-weq}. Similarly,  
% its linear term $\sPhi_1 \maps (L, d) \to (L',d')$ induces for each $n \geq 1$ a quasi--isomorphism of cochain complexes
% \[
% \cF_{n}\sPhi_1 \maps (\cF_{n}L, d) \weq (\cF_nL', d').
% \]
% %\item 
we say $\Phi$ is a \df{fibration} if $\ctan(\Ph)$ is a fibration in $\cCh$.
% its linear term $\sPhi_1 \maps (L, d) \to (L',d')$ induces for each $n \geq 1$ an epimorphism of cochain complexes
% \[
% \cF_{n}\sPhi_1 \maps (\cF_{n}L, d) \fib (\cF_nL', d').
% \]

%\end{enumerate}
\end{definition}

\begin{theorem} \label{thm:Linf_cfo}
\mbox{}
The category $\cLie$ of complete shifted $L_{\infty}$-algebras has the structure of a category of fibrant objects with functorial path objects in which the weak equivalences and fibrations are those morphisms that satisfy the defining criteria given in Def.\ \ref{def:Lie-weq}
\end{theorem}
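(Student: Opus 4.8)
The plan is to reduce every axiom of Definition \ref{def:cfo} to the corresponding statement for $\cCh$ (Theorem \ref{thm:Ch_cfo}), exploiting the fact that, by Definition \ref{def:Lie-weq}, a morphism $\Phi$ in $\cLie$ is a weak equivalence (resp.\ a fibration) precisely when its linear term $\Phi^1_1 \maps (L,d) \to (L',d')$ is one in $\cCh$. Axioms (1), (2), (3) and (7) are then immediate: the composition formula \eqref{eq:comp} gives $(\Psi\Phi)^1_1 = \Psi^1_1\Phi^1_1$, so the linear-term assignment $\cLie \to \cCh$ is functorial, and hence the $2$-out-of-$3$ property and the closure of fibrations under composition transfer verbatim from $\cCh$; the terminal object is the zero algebra, whose structure map out of any $L$ has surjective linear term, giving (7); and the characterization of $L_\infty$-isomorphisms recalled just before Section \ref{sec:product} shows that an isomorphism in $\cLie$ has a $\cCh$-isomorphism as its linear term, whence axiom (1) follows from axiom (1) for $\cCh$.

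For the functorial path object (axiom 6) I would first upgrade Lemma \ref{lem:ctensor} from $\cCh$ to $\cLie$. The key point is that, for a fixed $L \in \cLie$ and a cdga morphism $f \maps B \to B'$, the induced map $\id_L \ctensor f \maps L \ctensor B \to L \ctensor B'$ is a \emph{strict} $L_\infty$-morphism (the bracket formula \eqref{eq:QB} together with the fact that $f$ is an algebra map makes $\id_L \tensor f$ commute with the codifferentials), and its linear term is exactly the $\cCh$-morphism analyzed in Lemma \ref{lem:ctensor}. Hence $L \ctensor -$ sends quasi-isomorphisms of cdgas to weak equivalences and surjections to fibrations in $\cLie$. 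Applying this to the unit $\eta \maps \kk \weq \Om_1$ and the evaluation fibration $\ro \maps \Om_1 \fib \kk \dsum \kk$, together with the identifications $L \ctensor \kk \cong L$ and $L \ctensor (\kk \dsum \kk) \cong L \dsum L$, exhibits
\[
L \xto{\id_L \ctensor \eta} L \ctensor \Om_1 \xto{\id_L \ctensor \ro} L \dsum L
\]
as a path object; functoriality in $L$ comes from the naturality of the functor $-\ctensor \Om_1$ of \eqref{eq:cLie-cdga}.

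The real work is axioms (4) and (5), the existence of pullbacks of (acyclic) fibrations, and here my approach is a two-step \textbf{strictification}. Given a fibration $F \maps V \fib U$, Lemma \ref{lem:split} applied to the surjection $F^1_1$ yields a filtered linear section $\si \maps U \to V$; setting $K := \ker F^1_1 \in \cCh$, transporting the $L_\infty$-structure of $V$ across the $\cVect$-isomorphism $V \cong U \dsum K$, $v \mapsto (F^1_1 v,\, v - \si F^1_1 v)$, turns $F$ into a morphism $\hat F \maps U \dsum K \to U$ with linear term the projection $\Pr_U$ but possibly nonzero higher terms. A recursive gauge argument — solvable because $\Pr_U$ is split and convergent because of the filtration (with compatibility guaranteed by Lemma \ref{lem:fcompat}), mirroring the inductions in Lemma \ref{lem:split} and Proposition \ref{prop:Ch-acyc} — produces an $L_\infty$-automorphism $\Xi$ of $U \dsum K$ with $\hat F \Xi = \Pr_U$ strict. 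Thus $F = \Pr_U \circ \Theta$ for an $L_\infty$-isomorphism $\Theta \maps V \to U \dsum K$ and a \emph{strict} fibration $\Pr_U$, so the pullback of $F$ along any $G \maps W \to U$ reduces to that of $\Pr_U$.

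It then remains to build the pullback of the strict fibration $\Pr_U \maps U \dsum K \to U$ along $G$, which I would carry out explicitly following the coalgebra-level lemmas of Vallette \cite[Thm.\ 4.1]{Vallette:2014}. The underlying complete complex will be $W \dsum K$ equipped with the twisted differential of Proposition \ref{prop:Ch-pb}, the higher brackets $Q^P$ being determined by $Q_W$ on the $W$-factor, the $K$-brackets of $\hat Q$, and the mixed brackets dictated by $G$; the projection $\Pr_W \maps W \dsum K \to W$ is strict and the other leg reuses $G$ on the $W$-component. To keep all sums finite I would assemble $Q^P$ and verify the universal property levelwise over the tower $\{\qf{L}{n}\}$ of bounded filtered quotients (Proposition \ref{prop:quot-filt} and Section \ref{sec:Lietower}) and then pass to the limit via Lemma \ref{lem:fvect}. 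Because weak equivalences and fibrations are detected on linear terms, the checks that $\Pr_W$ is a fibration, and an acyclic one when $F$ is (using that $K$ is levelwise acyclic together with the long exact sequence, exactly as in Proposition \ref{prop:Ch-pb}(2)), reduce to the $\cCh$ statements already established. The main obstacle is precisely this last construction: producing the pulled-back $L_\infty$-structure $Q^P$ and verifying the universal property in $\cLie$, rather than merely at the level of underlying complexes. The strictification is what makes this bookkeeping tractable, and the filtration/tower is what guarantees convergence.
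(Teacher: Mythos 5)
Your proposal is correct and follows essentially the same route as the paper: reduce axioms (1)--(3) and (7) to $\cCh$ via the linear-term functor, obtain the functorial path object from $-\ctensor\Om_1$ and Lemma \ref{lem:ctensor}, strictify a fibration using a section of its linear term supplied by Lemma \ref{lem:split} together with a recursively built coalgebra automorphism (Prop.\ \ref{prop:strict_fib}), and then construct the pullback of the resulting strict fibration explicitly \`a la Vallette (Prop.\ \ref{prop:strict-pb}), with the (acyclic) fibration property of the projection checked in $\cCh$. The only cosmetic differences are that the paper's strictification acts directly on $\S(L)$ without first splitting $V\cong U\dsum K$, and that the finite arity-by-arity recursion needs the filtration only for compatibility, not for convergence.
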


\begin{proof}
As in the proof of Thm.\ \ref{thm:Ch_cfo}, we will proceed by directly
verifying axioms 1 through 7 of Def.\ \ref{def:cfo}. Finite products in $\cLie$ were discussed in Sec.\ \ref{sec:product}, and axioms 1,2,3, and 7 are trivial. We verify the remaining axioms 
in the sections below by building on the results of Sec.\ \ref{sec:Ch_cfo}.
\end{proof} 

\subsubsection{Pullbacks of (acyclic) fibrations in $\cLie$ (axioms 4 \& 5)}
We first record a filtered analog of a useful simplification due to B.\ Vallette \cite[Prop.\ 4.3]{Vallette:2014} which allows us to ``strictify'' any fibration in $\cLie$ without altering the linear term.

\begin{proposition} \label{prop:strict_fib}
Let $\Phi \maps (L,d,Q) \fib (L',d',Q')$  be a fibration in $\cLie$ 
Then there exists a $\cinf$-algebra $(\bev{L}, \bev{d},\bev{Q})$ and an 
isomorphism $\Psi \maps (\bev{L},\bev{d},\bev{Q}) \xto{\cong} ({L},d,Q)$ in $\cLie$ such that
\[
\Phi\Psi \maps (\bev{L},\bev{d},\bev{Q}) \to (L',d',Q') 
\]
is a strict fibration with linear term  $(\Phi \Psi)^1_1 = \sPhi_1$.
\end{proposition}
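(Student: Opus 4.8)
The plan is to strictify $\Phi$ from the right by absorbing its higher components into a filtered coalgebra automorphism of the source, following \cite[Prop.\ 4.3]{Vallette:2014}, while taking care that every map we introduce respects the filtrations. Since $\Phi$ is a fibration, the restrictions $\cF_n\sPhi_1$ are surjective for all $n \geq 1$, so Lemma \ref{lem:split} furnishes a filtration-preserving linear section $\si \maps L' \to L$ in $\cVect$ with $\sPhi_1 \si = \id_{L'}$. We stress that $\si$ need not be a chain map; this is the only place the fibration hypothesis is used, and it is precisely what guarantees a section that does not raise filtration degree.

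Next I would build a coalgebra morphism $\Psi \maps \S(\bev L) \to \S(L)$, where $\bev L := L$ as a complete filtered graded vector space, by specifying its structure maps $\sPsi_n \maps \S^n(\bev L) \to L$ recursively. Set $\sPsi_1 := \id_L$ and, for $n \geq 2$,
\[
\sPsi_n := - \si \sum_{t=2}^{n} \sPhi_t \, \Psi^t_n,
\]
where $\Psi^t_n \maps \S^n(\bev L) \to \S^t(L)$ is the restriction-projection determined by the formula \eqref{eq:map} from the lower structure maps $\sPsi_1,\dots,\sPsi_{n-1}$. For $t \geq 2$ the map $\Psi^t_n$ involves only $\sPsi_k$ with $k \leq n-1$, so the recursion is well posed. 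The composition formula \eqref{eq:comp} then gives $(\Phi\Psi)^1_1 = \sPhi_1$, and, using $\sPhi_1\si = \id_{L'}$,
\[
(\Phi\Psi)^1_n = \sPhi_1 \sPsi_n + \sum_{t=2}^{n} \sPhi_t \, \Psi^t_n = 0 \qquad (n \geq 2),
\]
so $\Phi\Psi$ has vanishing higher structure maps, i.e.\ is strict in the sense of \eqref{eq:strict}.

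It then remains to promote $\Psi$ to an isomorphism in $\cLie$ and record the transported structure on $\bev L$. By induction on $n$, each $\sPsi_n$ satisfies the filtration estimate \eqref{eq:fcompat}: the inductive hypothesis together with Lemma \ref{lem:fcompat} controls $\Psi^t_n$, each $\sPhi_t$ respects the filtration since $\Phi$ is a morphism in $\cLie$, and $\si$ preserves filtration degree. Hence $\Psi$ is a filtered coalgebra morphism; since $\sPsi_1 = \id_L$ is invertible, $\Psi$ is an isomorphism of filtered conilpotent coalgebras whose inverse $\Psi^{-1}$ has structure maps built by the same recursion and inheriting \eqref{eq:fcompat}, hence is again filtered. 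I would then set $\bev Q := \Psi^{-1} Q \Psi$, a degree $1$ coderivation with $\bev Q^2 = 0$ that is filtration-compatible as a composite of such maps, so $(\bev L, \bev Q) \in \cLie$. By construction $\Psi \bev Q = Q \Psi$, making $\Psi \maps (\bev L, \bev Q) \xto{\cong} (L, Q)$ an isomorphism in $\cLie$; and $\Phi\Psi \maps (\bev L, \bev Q) \to (L', Q')$ is a strict $L_\infty$-morphism whose linear term $\sPhi_1$ is a levelwise epimorphism, hence a strict fibration with $(\Phi\Psi)^1_1 = \sPhi_1$.

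The main obstacle I anticipate is the bookkeeping of the filtration compatibility for the recursively defined $\sPsi_n$: one must track that the section $\si$, produced by Lemma \ref{lem:split} only as a map in $\cVect$, does not raise filtration degree, and combine this with the filtration bounds for the $\sPhi_t$ and the inductively controlled $\Psi^t_n$ through Lemma \ref{lem:fcompat}. Everything else—that conjugation by a coalgebra isomorphism preserves the coderivation property and the relation $\bev Q^2 = 0$, and that strictness follows from the vanishing of the higher components—is formal.
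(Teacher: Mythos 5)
Your proposal is correct and follows essentially the same route as the paper's proof: both strictify $\Phi$ by a recursively defined filtered coalgebra automorphism $\Psi$ of the source with $\sPsi_1=\id_L$ and $\sPsi_n=-\si\sum_{t=2}^n\sPhi_t\Psi^t_n$, using Lemma \ref{lem:split} to produce the filtration-preserving section $\si$, and then transport the codifferential via $\bev{Q}:=\Psi^{-1}Q\Psi$. Your added remarks on the well-posedness of the recursion and the filtration bookkeeping via Lemma \ref{lem:fcompat} are accurate elaborations of what the paper leaves implicit.
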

\begin{proof}
Consider the linear term $\sPhi_1 \maps L \to L'$ as a morphism in $\cVect$.
By definition, the restriction 
\[
\cF_n\sPhi_1 \maps \cF_nL \to \cF_nL' 
\]
is a surjection for all $n \geq 1$. Therefore, Lemma \ref{lem:split} implies that there exists a filtration preserving linear map $\si \maps L' \to L$ such that $\sPhi_1 \circ \si = \id_{L}$. 

Define $\bev{L}:=L$ and $\Psi^1_1 :=\id_{\bev{L}}$. Let $m\geq 2$ and suppose we have defined a sequence of degree 0 filtration-preserving linear maps $\{\Psi^1_k \maps \S^k(\bev{L}) \to L\}^{m-1}_{k \geq 1}$. It follows from Eq.\ \ref{eq:map} that this sequence gives us well defined linear maps $\Psi^k_m \maps \S^m(\bev{L}) \to  \S^k( L)$ for $2 \leq k \leq m$. Now define $\Psi^1_m \maps \S^m( \bev{L}) \to L$ as:
\[
\Psi^1_m = -\sum_{k \geq 2}^m \sigma \Phi^1_k \Psi^k_m. 
\]
Clearly $\Psi^1_m$ is filtration preserving, since it is the composition of filtration preserving maps.
This construction inductively yields a coalgebra isomorphism $\Psi \maps \S(\bev{L}) \xto{\cong} \S(L)$. By defining a new codifferential  $\bev{Q}:= \Psi^{-1} Q \Psi$, we promote $\Psi$ to an isomorphism $\Psi \maps \bigl (\bev{L}, \bev{d},\bev{Q} \bigr) \xto{\cong} \bigl (L, d, Q \bigr)$ in $\cLie$. And since $\Psi^1_1 = \id$, it follows from Eq.\ \ref{eq:comp} that we have $(\Phi\Psi)^1_1 = \Phi^1_1 \maps  L \to  L'$.
Finally, to show that $\Phi\Psi$ is strict, i.e.\ $(\Phi \Psi)^1_m=0$ for all $m\geq 2$, we observe that Eq.\ \ref{eq:comp} and the construction of $\Psi$ imply that
\[
(\Phi\Psi)^1_m = - \Phi^1_1 \bigl (\sum_{k \geq 2}^m \sigma \Phi^1_k \Psi^k_m \bigr)  + \sum_{k \geq 2}^m \Phi^1_k \Psi^k_m. 
\]
This completes the proof.
\end{proof}
\begin{remark}\label{rmk:strict_fib} 
By construction, the tangent functor $\ctan \maps \cLie \to \cCh$ is invariant under the ``strictification'' process described in Prop.\ \ref{prop:strict_fib}. That is,
\[
\ctan \Bigl( (\bev{L},\bev{d},\bev{Q}) \xto{\Phi\Psi} (L',d',Q')  \Bigr)  = 
(L,d) \xto{\Phi^1_1} (L',d').
\]
\end{remark}

Next we show that strict fibrations and acyclic fibrations are preserved under pullbacks in $\cLie$. The general case for all (acyclic) fibrations will then follow as a corollary.  

In the proof of Thm.\ 4.1 \cite{Vallette:2014}, B.\ Vallette provides a useful construction of pullbacks along strict $\infty$-epimorphisms in $\SLie$. The construction easily extends to the complete filtered case. Suppose $\Ph=\sPh \maps (L,d,Q) \fib (L'',d'',Q'')$ is a strict fibration,  and $\Theta \maps (L',d',Q') \to (L'',d'',Q'')$ is an arbitrary morphism in $\cLie$. Since $\sPh \maps (L,d) \fib (L',d')$ is a fibration in $\cCh$, Prop.\ \ref{prop:Ch-pb} implies that we have a pullback square of the following form 
\[
\begin{tikzpicture}[descr/.style={fill=white,inner sep=2.5pt},baseline=(current  bounding  box.center)]
\matrix (m) [matrix of math nodes, row sep=2em,column sep=3em,
  ampersand replacement=\&]
  {  
\bigl( \ti{L}, \ti{d} \bigr) \& (L, d) \\
(L',d') \& (L'', d'') \\
}
; 
  \path[->,font=\scriptsize] 
   (m-1-1) edge node[auto] {$\pr \circ h $} (m-1-2)
   (m-1-1) edge node[auto,swap] {$\pr'$} (m-2-1)
   (m-1-2) edge node[auto] {$\sPh$} (m-2-2)
   (m-2-1) edge node[auto] {$\Theta^1_1$} (m-2-2)
  ;

%begin pullback symbol%
  \begin{scope}[shift=($(m-1-1)!.4!(m-2-2)$)]
  \draw +(-0.25,0) -- +(0,0)  -- +(0,0.25);
  \end{scope}
  %end pullback symbol%
\end{tikzpicture}
\]
with $\ti{L}= L' \times \ker \sPh$, and $\ti{d}= j\circ (d' \times d) \circ h$. Recall that 
$h \maps L' \times L \xto{\cong} L' \times L$ and $j=h^{-1} \maps L' \times L \xto{\cong} L' \times L$ are the filtered linear maps 
\[
h(v',v):=(v', \si \sThe_1(v') + v), \quad j(v',v):=(v', v - \si \sThe_1(v') ),
\]
where $\si \maps L'' \to L$ is a section in $\cVect$ of the surjection $\sPh$.

We denote elements of the product $ L' \times L$ as vectors  $\bb{v}:=(v',v)$.
In order to lift the above pullback diagram to $\cLie$, we follow \cite{Vallette:2014} and
define the linear maps $H^1_k \maps \S^k(L' \times L) \to  L' \times  L$ to be
\begin{equation*} %\label{eq:H}
 H^1_1(\bb{v}):= h(\bb{v}), \quad
H^1_{k}(\bb{v}_1,\ldots,\bb{v}_k):= \bigl(0,  \si\sThe_{k}(v'_1,\ldots,v'_k) \bigr).
\end{equation*}
Similarly, let $J^1_k \maps \S^k( L' \times L) \to  L' \times L$ denote the linear maps
\begin{equation} \label{eq:J}
 J^1_1(\bb{v}):= j(\bb{v}), \quad
J^1_{k}(\bb{v}_1,\ldots,\bb{v}_k):= \bigl(0, - \si \sThe_{k}(v'_1,\ldots,v'_k) \bigr).
\end{equation}
A direct calculation shows that $H  J = J  H = \id_{\S(L' \times L)}$.

Finally, since $\S(\ti{L}) \sse \S(L' \times L)$, we can define 
a degree 1 codifferential $\ti{Q} \maps \S(\ti{L}) \to \S(\ti{L})$ via
\begin{equation} \label{eq:deltilde}
\ti{Q}:= J Q_{\times}  H \vert_{\S(\ti{L})}
\end{equation}
where $Q_{\times}$ is the $\cinf$-structure  on $L' \times L$ induced by $Q'$ and $Q$. By construction, $\ti{Q}^1_1 = \ti{d}$ and $\ti{Q}^2 =0$, and a direct calculation shows that indeed $\ti{Q}$ is well-defined, i.e., $ \im \ti{Q} \sse \S(\ti{L})$. (See, for example, \cite[Sec.\ 4.1]{Rogers:2020} for details of an almost identical calculation.) 

Since $H$, $J$, and $\ti{Q}$ are constructed from filtration preserving maps, we conclude that
$(\ti{L},\ti{Q}) \in \Lie$, and 
\begin{equation} \label{eq:H-morph}
H \vert_{\S(\ti{L})} \maps (\ti{L}, \ti{d},\ti{Q}) \to (L' \times L, d_{\times},Q_\times) 
\end{equation} 
is a morphism in $\cLie$. Let $\Pr'$ and $\Pr$ denote the canonical projections out of the product $(L' \times L, d_{\times}, Q_\times)$. We then have:

\begin{proposition} \label{prop:strict-pb}
If $\Ph=\sPh \maps (L,d,Q) \fib (L'',d'',Q'')$ is a strict fibration (acyclic fibration) 
and $\Theta \maps (L',d',Q') \to (L'',d'',Q'')$ is an arbitrary morphism in $\cLie$, then 
\begin{equation} \label{diag:strict_pullback3}
\begin{tikzpicture}[descr/.style={fill=white,inner sep=2.5pt},baseline=(current  bounding  box.center)]
\matrix (m) [matrix of math nodes, row sep=2em,column sep=3em,
  ampersand replacement=\&]
  {  
(\ti{L},\ti{d},\ti{Q})  \& \bigl (L,d, Q  ) \\
\bigl   (L', d', Q') \& \bigl (  L'', d'', Q'' ) \\
}; 
  \path[->,font=\scriptsize] 
   (m-1-1) edge node[auto] {$\ppr H$} (m-1-2)
   (m-1-1) edge node[auto,swap] {$\ppr'H$} (m-2-1)
   (m-1-2) edge node[auto] {$\Ph$} (m-2-2)
   (m-2-1) edge node[auto] {$\Theta$} (m-2-2)
  ;

%begin pullback symbol%
  \begin{scope}[shift=($(m-1-1)!.4!(m-2-2)$)]
  \draw +(-0.25,0) -- +(0,0)  -- +(0,0.25);
  \end{scope}
  %end pullback symbol%
\end{tikzpicture}
\end{equation}
is a pullback diagram in the category $\cLie$.
Moreover, the morphism $\ppr' H \maps( \ti{L}, \ti{d},\ti{Q}) \to (L',d',Q')$ is a fibration (acyclic fibration) in $\cLie$.
\end{proposition}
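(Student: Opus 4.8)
The plan is to follow the strategy of \cite[Thm.\ 4.1]{Vallette:2014}, exploiting the mutually inverse filtered coalgebra isomorphisms $H,J$ of $\S(L'\dsum L)$ (with $HJ=JH=\id$), the identity $\ti Q=J Q_\dsum H\vert_{\S(\ti L)}$, and the fact \eqref{eq:H-morph} that $H\vert_{\S(\ti L)}$ is a morphism in $\cLie$. First I would check commutativity of \eqref{diag:strict_pullback3}. Since a coalgebra morphism into $\S(L'')$ is determined by its corestriction to $L''$, it suffices to compare $\sPh\,\pr_L\,H^1$ and $(\Theta\,\ppr'H)^1$ on $\S(\ti L)$. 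From \eqref{eq:H}, $\ppr'H=\ppr'$ is strict with linear term $\pr_{L'}$, so the second corestriction is the family $(\sThe_n)_n$ evaluated on $L'$-components; and strictness of $\Ph$ together with $\sPh\si=\id_{L''}$ yields the same family for the first, the point being that the $L$-components of elements of $\S(\ti L)=\S(L'\dsum\ker\sPh)$ lie in $\ker\sPh$, which cancels the stray $\sPh$-term produced by $H^1_1=h$.

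Next I would verify the universal property. Given $(M,Q_M)\in\cLie$ with morphisms $A\maps M\to L$ and $B\maps M\to L'$ satisfying $\Ph A=\Theta B$, let $G\maps M\to(L'\dsum L,d_\dsum,Q_\dsum)$ be the unique morphism into the product with $\ppr G=A$ and $\ppr'G=B$, and set $C:=J\circ G$. Granting that $C$ has image in $\S(\ti L)$ (the key step below), compatibility with the codifferentials is then automatic: for $y\in\S(M)$,
\[
\ti Q\,C(y)=J Q_\dsum H J G(y)=J Q_\dsum G(y)=J G\,Q_M(y)=C\,Q_M(y),
\]
using $HJ=\id$ and that $G$ is a dg morphism. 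Since $J$ and $G$ are assembled from filtration-preserving maps, Lemma \ref{lem:fcompat} makes $C$ filtration-compatible, and $\ti L=L'\dsum\ker\sPh$ is complete, so $C\in\cLie$. Moreover $HC=HJG=G$, whence $\ppr HC=A$ and $\ppr'HC=B$; uniqueness is immediate, since any competing factorization $C'$ satisfies $HC'=G$ by the universal property of the product, and $H$ is injective.

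The heart of the matter, and the step I expect to be the main obstacle, is showing $C=JG$ lands in $\S(\ti L)$. By cofreeness this reduces to the single identity $\sPh\,\pr_L(JG)^1=0\maps\S(M)\to L''$. Expanding $(JG)^1_n=\sum_{t}J^1_t G^t_n$ via \eqref{eq:comp} with the explicit $J$ of \eqref{eq:J}, and using $\pr_{L'}^{\tensor t}G^t=B^t$ together with $\sPh\si=\id$, one computes
\[
\sPh\,\pr_L(JG)^1_n=\sPh A^1_n-\sum_{t\ge 1}\sThe_t\,B^t_n=\sPh A^1_n-(\Theta B)^1_n,
\]
which vanishes precisely because the corestriction of the hypothesis $\Ph A=\Theta B$ reads $\sPh A^1=\sum_t\Theta^1_t B^t$ (here $\Ph$ is strict). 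This is the filtered analog of Vallette's bookkeeping, and I would carry it out the same way.

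Finally, the fibration claim follows cleanly from the chain-level result. The morphism $\ppr'H=\ppr'$ is strict with linear term the projection $\pr_{L'}\maps\ti L=L'\dsum\ker\sPh\to L'$, which is exactly the $\cCh$-pullback projection of diagram \eqref{diag:filt-pb}. Hence Prop.\ \ref{prop:Ch-pb} applies verbatim: its restriction $\cF_n\pr_{L'}\maps\cF_nL'\dsum\cF_n\ker\sPh\to\cF_nL'$ is surjective for every $n\ge 1$, so $\ppr'H$ is a fibration in $\cLie$ by Def.\ \ref{def:Lie-weq}; and when $\Ph$ is an acyclic fibration, Prop.\ \ref{prop:Ch-pb}(2) shows this same linear term is an acyclic fibration in $\cCh$, whence $\ppr'H$ is an acyclic fibration in $\cLie$.
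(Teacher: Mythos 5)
Your proposal is correct and follows essentially the same route as the paper: the universal map is the same $J\circ G$ (the paper writes it as $J\circ(\Psi\tensor\Psi')$, citing Vallette's Thm.\ 4.1 for the unfiltered universal property rather than unwinding the bookkeeping as you do), filtration-compatibility is observed in the same way, and the fibration claim is reduced to Prop.\ \ref{prop:Ch-pb} identically. The only difference is that you make explicit the verification that $JG$ lands in $\S(\ti L)$ and commutes with the codifferentials, which the paper delegates to \cite{Vallette:2014}.
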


\begin{proof}
Theorem 4.1 of \cite{Vallette:2014} implies that the diagram \eqref{diag:strict_pullback3}
is the pullback in the category $\SLinf$. Indeed, if $(V,d^V,Q^V) \xto{\Psi} (L,d,Q)$ and $(V,d^V,Q^V) \xto{\Psi'} (L',d',Q')$ are morphisms in $\Lie$ such that $\Phi\Psi=\Theta \Psi'$, then the morphism
$J \cc (\Psi \tensor \Psi') \maps (V,d^V,Q^V) \to (\ti{L}, \ti{d},\ti{Q})$
% \begin{equation*} %\label{eq:strict-pb1}
% J \cc (\Psi \tensor \Psi') \maps (V,d^V,Q^V) \to (\ti{L}, \ti{d},\ti{Q})
% \end{equation*}
is the unique map which makes the relevant diagrams commute. (See Sec.\ \ref{sec:product} for a reminder of the notation $\Psi \tensor \Psi'$.) 

It is then clear that if $\Psi$ and $\Psi'$ are morphisms in $\cLie$, then so is 
$J \cc (\Psi \tensor \Psi')$. Hence, diagram \eqref{diag:strict_pullback3} lifts to a pullback diagram in $\cLie$. Finally, it follows from Prop.\ \ref{prop:Ch-pb} that $\Pr'H$ is a fibration (acyclic fibration).
\end{proof}

Now we remove the assumption in Prop.\ \ref{prop:strict-pb} that the fibration $\Ph$ is strict.
\begin{corollary} \label{cor:pb}
Let $\Ph\maps (L,d,Q) \fib (L'',d'',Q'')$ be a fibration (acyclic fibration).
Let\\ ${\Theta \maps (L',d',Q') \to (L'',d,'',Q'')}$ be an arbitrary morphism in $\cLie$.
Then the pullback of the diagram 
\[
(L',d',Q') \xto{\Theta} (L'',d'',Q'') \xleftarrow{\Ph} (L,d,Q)
\]
exists in $\cLie$. Furthermore, the canonical projection from the pullback to $(L',d',Q')$ is a (acyclic) fibration.
\end{corollary}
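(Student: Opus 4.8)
The plan is to reduce to the strict case treated in Prop.\ \ref{prop:strict-pb} by first strictifying $\Ph$. Applying Prop.\ \ref{prop:strict_fib} to the fibration $\Ph \maps (L,Q) \fib (L'',Q'')$ produces a complete $L_\infty$-algebra $(\bev{L},\bev{Q})$ together with an isomorphism $\Psi \maps (\bev{L},\bev{Q}) \xto{\cong} (L,Q)$ in $\cLie$ for which $\Ph\Psi \maps (\bev{L},\bev{Q}) \to (L'',Q'')$ is a \emph{strict} fibration with linear term $(\Ph\Psi)^1_1 = \sPhi_1$.

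The essential point is that, by Def.\ \ref{def:Lie-weq}, both the fibrations and the weak equivalences of $\cLie$ are detected solely on the linear term. Since $\Ph\Psi$ and $\Ph$ share the same linear term $\sPhi_1$, the strictified morphism $\Ph\Psi$ is a fibration (resp.\ acyclic fibration) exactly when $\Ph$ is. I can therefore feed the strict fibration $\Ph\Psi$ and the morphism $\Theta$ into Prop.\ \ref{prop:strict-pb}, obtaining an explicit pullback square in $\cLie$ whose corners are $(\ti{L},\ti{Q})$, $(\bev{L},\bev{Q})$, $(L',Q')$, and $(L'',Q'')$, with projections $\Pr H \maps (\ti{L},\ti{Q}) \to (\bev{L},\bev{Q})$ and $\Pr'H \maps (\ti{L},\ti{Q}) \to (L',Q')$, the latter being a fibration (resp.\ acyclic fibration).

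It remains to transport this pullback along $\Psi$. Observing that the cospan leg $\Ph\Psi = \Ph \circ \Psi$ differs from $\Ph$ only by precomposition with the isomorphism $\Psi$, the same object $(\ti{L},\ti{Q})$ --- now equipped with the top leg $\Psi \circ \Pr H \maps (\ti{L},\ti{Q}) \to (L,Q)$ and the unchanged left leg $\Pr'H$ --- forms a commuting square over the original cospan $(L',Q') \xto{\Theta} (L'',Q'') \xlfib{\Ph} (L,Q)$; indeed $\Ph\,(\Psi\,\Pr H) = (\Ph\Psi)\,\Pr H = \Theta\,\Pr'H$. A routine diagram chase with the universal property shows that replacing a cospan leg by one precomposed with an isomorphism does not alter the pullback, so this square is again a pullback, now of the original cospan. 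Since $\Pr'H$ is untouched, the canonical projection to $(L',Q')$ remains a fibration (resp.\ acyclic fibration).

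There is no genuinely hard step here: the content is carried by the strictification of Prop.\ \ref{prop:strict_fib} and the explicit strict pullback of Prop.\ \ref{prop:strict-pb}. The only subtlety I would watch is the bookkeeping of the transport step, namely confirming that the universal property transfers verbatim across the isomorphism $\Psi$ and that $\Psi^{-1}$, being a morphism in $\cLie$, keeps all constructions inside the category. I expect this formal transport to be the main (though still routine) obstacle.
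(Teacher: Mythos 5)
Your proposal is correct and follows essentially the same route as the paper: strictify $\Ph$ via Prop.\ \ref{prop:strict_fib}, apply the strict pullback construction of Prop.\ \ref{prop:strict-pb}, and then transport the resulting pullback square across the isomorphism $\Psi$. The only cosmetic difference is that you justify $\Ph\Psi$ being an (acyclic) fibration by noting these classes are detected on linear terms, whereas the paper leaves this implicit; the substance is identical.
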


\begin{proof}
Proposition \ref{prop:strict_fib} implies that there exists an isomorphism
$\Psi \maps (\bev{L},\bev{Q}) \xto{\cong} ({L},Q)$ in $\cLie$ such that 
$\bev{\Ph}:=\Ph \Psi \maps (\bev{L},\bev{d},\bev{Q}) \to (L'',d'',Q'')$ is a strict fibration with 
linear term  $(\bev{\Ph})^1_1 = \sPh$. Now we apply Prop.\ \ref{prop:strict-pb} to the strict fibration $\bev{\Ph}$ and $\Theta$ and obtain the pullback diagram
\[
\begin{tikzpicture}[operad style,row sep=2em,column sep=3em,]
\matrix (m) [matrix of math nodes]
  {  
(\ti{L}, \ti{d},\ti{Q})  \& \bigl (  \bev{L}, \bev{d},\bev{Q}  ) \\
\bigl   (L', d',Q') \& \bigl (  L'', d'', Q'' ) \\
}; 
  \path[->,font=\scriptsize] 
   (m-1-1) edge node[auto] {$\ppr H$} (m-1-2)
   (m-1-1) edge node[auto,swap] {$\ppr'H$} (m-2-1)
   (m-1-2) edge node[auto] {$\bev{\Ph}$} (m-2-2)
   (m-2-1) edge node[auto] {$\Theta$} (m-2-2)
  ;

%begin pullback symbol%
  \begin{scope}[shift=($(m-1-1)!.4!(m-2-2)$)]
  \draw +(-0.25,0) -- +(0,0)  -- +(0,0.25);
  \end{scope}
  %end pullback symbol%
\end{tikzpicture}
\]
in which $\Pr'H$ is a (acyclic) fibration. Then, since $\Psi$ is an isomorphism in $\cLie$, 
the diagram 
\begin{equation*}  %\label{diag:strict_pullback4}
\begin{tikzpicture}[operad style,row sep=2em,column sep=4em,]
\matrix (m) [matrix of math nodes]
  {  
(\ti{L},\ti{d},\ti{Q})  \& \bigl (L,d, Q  ) \\
\bigl   (L',d', Q') \& \bigl (  L'', d'', Q'' ) \\
}; 
  \path[->,font=\scriptsize] 
   (m-1-1) edge node[auto] {$\Psi\circ (\ppr H)$} (m-1-2)
   (m-1-1) edge node[auto,swap] {$\ppr'H$} (m-2-1)
   (m-1-2) edge node[auto] {$\Phi$} (m-2-2)
   (m-2-1) edge node[auto] {$\Theta$} (m-2-2)
  ;

%begin pullback symbol%
  \begin{scope}[shift=($(m-1-1)!.4!(m-2-2)$)]
  \draw +(-0.25,0) -- +(0,0)  -- +(0,0.25);
  \end{scope}
  %end pullback symbol%
\end{tikzpicture}
\end{equation*}
is also a pullback in $\cLie$.
\end{proof}

\subsubsection{Functorial path object for $\cLie$ (axiom 6)}
From our definition of weak equivalences and fibration in $\cLie$, Prop.\ \ref{prop:Ch-pathobj} immediately implies
\begin{proposition}\label{prop:cLie-pathobj}
To each  $\cinf$-algebra $\LdQ{}$, the assignment 
\[
\LdQ{} \quad \mapsto \quad \bigl(L \ctensor \Om_1, d_{\Om}, Q_{\Om}\bigr)
\]
is a functorial path object.% for the category $\cLie$. 
\end{proposition}

\subsection{Homotopy inverses of weak equivalences in $\cLie$} \label{sec:qinverse-Lie}

We begin with a decomposition lemma for acyclic fibrations.
\begin{lemma} \label{lem:min_mod}
Let $\Ph \maps (L, d, Q) \afib (L',d',Q')$ be an acyclic fibration in $\cLie$.
Let $(\ker \sPh, d)$ denote the kernel of the chain map $\ctan(\Ph)=\sPh \maps (L, d) \to (L',d')$ considered as an abelian $\sinf$-algebra. Then there exists a morphism 
$\Psi \maps (L, d, Q) \to (\ker \sPh, d)$ in $\cLie$
such that the morphism $\bigl(\Ph,\Psi \bigr) \maps (L,d,Q) \to (L' \times \ker \sPh, d_\times, Q_\times)$ induced via the universal property of the product
% \begin{equation*} %\label{eq:min_mod_iso}
% \bigl(\Ph,\Psi \bigr) \maps (L,d,Q) \to (L' \times \ker \sPh, d_\times, Q_\times)
% \end{equation*}
is an isomorphism of $\cinf$-algebras.

\end{lemma}

\begin{proof}
Since $\Phi$ is an acyclic fibration, the chain map $\sPh \maps (L, d) \to (L',d')$
is an acyclic fibration in $\cCh$. Proposition \ref{prop:Ch-acyc} implies that 
there exists a filtered chain map $\tau \maps (L',d') \to (L,d)$ in $\cCh$ and a filtered chain homotopy $h \maps L \to L[-1]$  such that
\begin{equation} \label{eq:min_mod1}
\sPh \cc \tau = \id_{L'}, \quad \sPsi_1 = dh + hd.
\end{equation}
where  $\sPsi_1 \maps (L,d) \to (\ker \sPh , d)$ is the chain map $\sPsi_1 := \id_L - \tau \cc \sPh$.
For each $n \geq 2$, let $\sPsi_n \maps \S^n(L) \to \ker \sPh$ be the linear map
\begin{equation*} %\label{eq:min_mod1.1.2}
\sPsi_{n }:= \sPsi_1 \circ h \circ \sQ_n.
\end{equation*}
Since $\ta$, $h$ and $\sQ_k$ are filtration preserving, so is $\sPsi \maps \S(L) \to \ker \sPh$. A direct calculation using \eqref{eq:min_mod1} and the fact that $Q^2=0$, shows
that for each $n \geq 1$
\[
d \vert_{\ker} \circ \sPsi_n = \sPsi_1 \sQ_n + \sum_{k=2}^n \sPsi_k Q^k_n.
\]
Hence $\sPsi$ defines a morphism in $\cLie$.
Finally, we note that the linear map $\tha \maps  L' \times \ker \sPh \to L$, defined as
$\tha(x',z):= \ta(x') + z$
% \[
% \tha \maps  L' \times \ker \sPh \to L, \quad \tha(x',z):= \ta(x') + z
% \]
is filtration preserving and inverse to $(\sPh,\sPsi_1)$ in $\cVect$. Hence, the morphism
$(\Psi,\Phi)$ is an isomorphism in $\cLie$.
\end{proof}

The above lemma implies that every acyclic fibration in $\cLie$ admits a right inverse.

\begin{corollary} \label{cor:acyc-retract}
Let $\Ph \maps (L, d, Q) \afib (L',d',Q')$ be an acyclic fibration in $\cLie$. Then there exists
a morphism $\chi \maps (L',d',Q') \to (L,d,Q)$ in $\cLie$ such that $\Ph\circ \chi = \id_{L'}$.
\end{corollary}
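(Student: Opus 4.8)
The plan is to read off this corollary directly from the decomposition in Lemma~\ref{lem:min_mod}, combined with the inclusion morphism recorded in Prop.~\ref{prop:Linf-inc}. First I would invoke Lemma~\ref{lem:min_mod} to obtain the isomorphism
\[
(\Ph,\Psi) \maps (L,d,Q) \xto{\cong} (L' \oplus \ker \sPh, d_\dsum, Q_\dsum)
\]
in $\cLie$, together with its inverse $(\Ph,\Psi)^{-1}$. Next, Prop.~\ref{prop:Linf-inc}, applied to the complete $L_\infty$-algebras $(L',d',Q')$ and $(\ker \sPh, d)$, supplies a strict $L_\infty$-morphism
\[
\iota \maps (L',d',Q') \to (L' \oplus \ker \sPh, d_\dsum, Q_\dsum)
\]
extending the inclusion of the first summand. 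I would then simply define $\chi := (\Ph,\Psi)^{-1} \circ \iota$, which is a morphism in $\cLie$, being a composite of such.

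It remains to check that $\Ph \circ \chi = \id_{L'}$. Here I would use the universal property of the product from Sec.~\ref{sec:product}: the morphism $(\Ph,\Psi)$ is characterized by $\Pr \circ (\Ph,\Psi) = \Ph$, where $\Pr \maps (L' \oplus \ker \sPh, d_\dsum, Q_\dsum) \to (L',d',Q')$ is the strict projection onto the first factor. Consequently $\Ph \circ (\Ph,\Psi)^{-1} = \Pr$, and therefore
\[
\Ph \circ \chi = \Pr \circ \iota.
\]
Both $\Pr$ and $\iota$ are strict, with linear terms the projection and inclusion of the first summand; since strict morphisms compose via their linear terms (as recorded after Def.~\ref{def:Linf-maps-def}), the composite $\Pr \circ \iota$ is the strict identity $\id_{L'}$, which finishes the argument.

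I do not expect a genuine obstacle here: Lemma~\ref{lem:min_mod} does all of the homotopical work (via the null-homotopy of $\ker \cF_n \sPh$ supplied by Prop.~\ref{prop:Ch-acyc}), and Prop.~\ref{prop:Linf-inc} handles the only remaining $L_\infty$-algebraic point. The one place warranting a moment of care is confirming that $\Pr \circ \iota$ equals the identity as a full coalgebra morphism and not merely on linear terms; this is immediate once one notes that the composite is strict and its linear term is $\id_{L'}$, so it coincides with the strict identity morphism in $\cLie$.
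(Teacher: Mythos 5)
Your proposal is correct and follows the paper's own proof essentially verbatim: both invoke Lemma~\ref{lem:min_mod} for the isomorphism $(\Ph,\Psi)$ onto the product $L'\oplus\ker\sPh$, use Prop.~\ref{prop:Linf-inc} for the inclusion of the first summand, and set $\chi:=(\Ph,\Psi)^{-1}\circ\iota$, so that $\Ph\circ\chi=\Pr\circ\iota=\id_{L'}$. Your extra remark that $\Pr\circ\iota$ is the strict identity is a fine (and harmless) elaboration of the paper's one-line computation.
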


\begin{proof}
Let $\bigl(\Ph,\Psi \bigr) \maps (L,d,Q) \xto{\cong} (L' \times \ker \sPh, d_\times, Q_\times)$ denote the isomorphism from Lemma \ref{lem:min_mod}. From Prop.\ \ref{prop:Linf-inc} we have the injective $L_\infty$-morphism $i_{L'} \maps (L',d',Q') \to (L' \times \ker \sPh, d_\times, Q_\times)$. Let $\chi:= (\Phi,\Psi)^{-1} \cc i_{L'}$. Then $\Phi \cc \chi = \mathrm{Pr}_{L'}(\Phi,\Psi) \cc (\Phi,\Psi)^{-1} \cc i_{L'} = \id_{L'}.$
\end{proof}

It now follows that every weak equivalence in $\cLie$ has a homotopy inverse via any choice of path object. In particular
\begin{theorem} \label{thm:Linf-qinv}
If $\Phi \maps \LdQ{} \weq \LdQ{\prime}$ is a weak equivalence in $\cLie$, then there exists a 
weak equivalence $\Psi \maps \LdQ{\prime} \weq \LdQ{}$ and homotopies 
\[
\mathcal{H}_L \maps (L,d,Q) \to (L \ctensor \Om_1, d_\Om,Q_\Om), \quad  \mathcal{H}_{L'} \maps (L',d',Q') \to (L' \ctensor \Om_1, d'_\Om, Q'_\Om), 
\] 
which induce equivalences $\Psi \Phi \simeq \id_{L}$  and $\Phi \Psi \simeq \id_{L'}$.
\end{theorem}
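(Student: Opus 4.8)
The plan is to deduce the statement directly from the abstract result Prop.\ \ref{prop:bifib-invert}, which asserts that in any CFO satisfying axioms \eqref{EA1} and \eqref{EA2} every weak equivalence admits a two-sided homotopy inverse realized through the functorial path object. Since Thm.\ \ref{thm:Linf_cfo} already establishes that $\cLie$ is a category of fibrant objects, the only real work is to verify that $\cLie$ satisfies the two additional axioms of Sec.\ \ref{sec:acyc}, and then to translate the output of Prop.\ \ref{prop:bifib-invert} into the notation of the present statement.

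First I would dispatch axiom \eqref{EA2}: this is immediate from Prop.\ \ref{prop:cLie-pathobj}, which exhibits the assignment $(L,d,Q) \mapsto (L \ctensor \Om_1, d_\Om, Q_\Om)$ as a functorial path object in $\cLie$. Accordingly, I would take $\Path(L) = (L \ctensor \Om_1, d_\Om, Q_\Om)$ and $\Path(L') = (L' \ctensor \Om_1, d'_\Om, Q'_\Om)$ throughout. Next I would verify axiom \eqref{EA1}, namely that every acyclic fibration in $\cLie$ is a retraction. This is exactly the content of Cor.\ \ref{cor:acyc-retract}, which in turn rests on the splitting Lemma \ref{lem:min_mod} together with the inclusion morphism supplied by Prop.\ \ref{prop:Linf-inc}.

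With both extra axioms in place, I would apply Prop.\ \ref{prop:bifib-invert} to the given weak equivalence $\Phi \maps (L,d,Q) \weq (L',d',Q')$. This produces a weak equivalence $\Psi \maps (L',d',Q') \weq (L,d,Q)$ together with homotopies $h_L \maps L \to \Path(L)$ and $h_{L'} \maps L' \to \Path(L')$ inducing $\Psi \Phi \heq \id_L$ and $\Phi \Psi \heq \id_{L'}$. Substituting the identifications $\Path(L) = L \ctensor \Om_1$ and $\Path(L') = L' \ctensor \Om_1$ from the previous step, and renaming $h_L, h_{L'}$ as $\mathcal{H}_L, \mathcal{H}_{L'}$, yields precisely the asserted conclusion.

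The main point to be careful about — rather than a genuine obstacle — is that the homotopies furnished by Prop.\ \ref{prop:bifib-invert} are realized through the \emph{distinguished} functorial path objects $L \ctensor \Om_1$ and $L' \ctensor \Om_1$, and not merely through some auxiliary iterated pullback of path objects arising in the general factorization of Sec.\ \ref{sec:factor}. This is guaranteed by Prop.\ \ref{prop:bifib-htpy}, which under axioms \eqref{EA1} and \eqref{EA2} converts any right homotopy into one realized via the distinguished path object; since I have verified both axioms, the homotopies may be taken of the required form. Apart from this bookkeeping, essentially all of the substantive work has already been carried out in the cited results, so no further new difficulty arises.
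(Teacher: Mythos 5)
Your proposal is correct and follows essentially the same route as the paper: the paper's proof simply observes that Cor.~\ref{cor:acyc-retract} gives Axiom \eqref{EA1} (with \eqref{EA2} supplied by Prop.~\ref{prop:cLie-pathobj}) and then invokes Prop.~\ref{prop:bifib-invert}. Your additional remark about Prop.~\ref{prop:bifib-htpy} guaranteeing that the homotopies can be realized via the distinguished path objects $L \ctensor \Om_1$ and $L' \ctensor \Om_1$ is a correct and worthwhile clarification of a point the paper leaves implicit.
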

\begin{proof}
Cor.\ \ref{cor:acyc-retract} implies that $\cLie$ satisfies Axiom A1 from Sec.\ \ref{sec:acyc}. Hence, the result follows from Prop.\ \ref{prop:bifib-invert}.
\end{proof}

\section{The functor $\sMC \maps \cLie \to \Kan$} \label{sec:sMC-htpy}
In this section, we prove that the simplicial Maurer-Cartan functor is an exact functor (Def.\ \ref{def:exact_functor}) between the category of complete $\sinf$-algebras and the category of Kan simplicial sets. As a corollary, we deduce that the simplicial Maurer-Cartan functor preserves homotopy pullbacks up to homotopy equivalence. 

\subsection{Kan conditions for $\sMC(L)$}
Let us first consider the bounded case.  We recall E.\ Getzler's theorem  concerning the simplicial Maurer-Cartan set for nilpotent $\sinf$-algebras.
\begin{theorem}[Prop.\ 4.7 \cite{Getzler}] \label{thm:sMCnil}
Let $\Ph=\sPh \maps \LdQ{} \to \LdQ{\prime}$  be a strict $\infty$-epimorphism in $\nilstLie$. Then
$\sMC(\Ph) \maps \sMC(L) \to \sMC(L')$ is a Kan fibration. In particular, $\sMC(L)$ is a Kan simplicial set for all $(L,d,Q)\in \nilstLie$.  
\end{theorem}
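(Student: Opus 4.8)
The plan is to verify directly that $\sMC(\Ph)$ has the right lifting property against every horn inclusion $\horn{n}{k} \emb \simp{n}$, which characterizes Kan fibrations. Observe first that the final assertion is the special case $L'=0$: the terminal map $(L,d,Q) \to 0$ is a strict $L_\infty$-epimorphism, $\sMC(0)=\simp{0}$ is a point, and a Kan fibration over a point is exactly a Kan complex. So it suffices to treat the relative statement. Since $\sMC_n(L)=\MC(L \ctensor \Om_n)$, a lifting problem for $\horn{n}{k}\emb\simp{n}$ against $\sMC(\Ph)$ unwinds, through the matching-object identification $\Hom(\horn{n}{k},\sMC(L)) \cong \MC\bigl(L \ctensor \Om(\horn{n}{k})\bigr)$, into the following: given $\al \in \MC(L \ctensor \Om(\horn{n}{k}))$ and $\be \in \MC(L' \ctensor \Om_n)$ whose images agree in $\MC(L' \ctensor \Om(\horn{n}{k}))$, produce $\ti{\al} \in \MC(L \ctensor \Om_n)$ restricting to $\al$ and mapping to $\be$. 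Here $\Om(\horn{n}{k})$ is the cdga of polynomial forms on the horn and $r \maps \Om_n \to \Om(\horn{n}{k})$ is the restriction.

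The key homotopical input is that $r$ is a surjective quasi-isomorphism of cochain complexes, i.e.\ the relative polynomial de Rham forms of the contractible pair $(\simp{n},\horn{n}{k})$ are acyclic; this can be produced via Dupont's explicit simplicial contraction. Granting it, Lemma \ref{lem:ctensor}, applied to the complete complex $(L,d)$ and the underlying chain map $r$, shows that the linear term of $\mathrm{id}_L \ctensor r$ restricts to a surjective quasi-isomorphism on each filtration piece; hence $\mathrm{id}_L \ctensor r$ is an acyclic fibration in $\cLie$ by Def.\ \ref{def:Lie-weq}, and likewise $\mathrm{id}_{L'} \ctensor r$. As $\Ph$ is a strict epimorphism, $\Ph \ctensor \mathrm{id}$ is a fibration, so Cor.\ \ref{cor:pb} yields the pullback $L_\infty$-algebra $P_{n,k}$ of the cospan $L' \ctensor \Om_n \xto{\mathrm{id}\ctensor r} L' \ctensor \Om(\horn{n}{k}) \xleftarrow{\Ph\ctensor\mathrm{id}} L \ctensor \Om(\horn{n}{k})$. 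The canonical comparison $L \ctensor \Om_n \to P_{n,k}$ has linear term a filtration-wise surjective quasi-isomorphism: its composite down to $L \ctensor \Om(\horn{n}{k})$ is $\mathrm{id}_L \ctensor r$, while $P_{n,k} \to L \ctensor \Om(\horn{n}{k})$ is the base change of the acyclic fibration $\mathrm{id}_{L'}\ctensor r$, hence acyclic by Prop.\ \ref{prop:Ch-pb}, so two-out-of-three plus surjectivity makes the comparison an acyclic fibration in $\cLie$.

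I would then invoke Cor.\ \ref{cor:acyc-retract}: every acyclic fibration in $\cLie$ admits a section, so $\MC$ sends the comparison $L \ctensor \Om_n \to P_{n,k}$ to a surjection of Maurer--Cartan sets. Identifying $\MC(P_{n,k})$ with the set of horn-lifting data $\{(\al,\be)\}$ above then reduces the existence of the lift $\ti{\al}$ to exactly this surjectivity, completing the verification of the RLP and hence showing $\sMC(\Ph)$ is a Kan fibration.

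The main obstacle is the last identification: one must know that $\MC$ carries $P_{n,k}$ to the fibre product $\MC(L \ctensor \Om(\horn{n}{k})) \times_{\MC(L' \ctensor \Om(\horn{n}{k}))} \MC(L' \ctensor \Om_n)$, and that it carries the colimit presentation of the horn to the matching object $\MC(L \ctensor \Om(\horn{n}{k}))$. These are set-level precursors of the exactness in Thm.\ \ref{thm:sMC-exact}, but they cannot be cited from it here; instead I would establish them directly by a filtration induction, assembling an MC element of the limit degree-by-degree from compatible data and using completeness of $L$ to sum the contributions, with nilpotency/boundedness guaranteeing the sums are finite. (Alternatively one can avoid the pullback and follow Getzler's original argument, solving the Maurer--Cartan equation relative to the horn by iterating the contracting homotopy of $r$, where the same nilpotency forces termination after finitely many steps.)
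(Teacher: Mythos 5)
The paper does not prove this statement: it is imported verbatim from Getzler \cite[Prop.\ 4.7]{Getzler}, so your proposal is necessarily a different route. Your outline is reasonable and, importantly, not circular with respect to the paper's own development --- Cor.\ \ref{cor:pb} and Cor.\ \ref{cor:acyc-retract} sit in Section \ref{sec:hmpty-Linf} and their proofs make no use of the simplicial Maurer--Cartan functor, so you may use them here, whereas Prop.\ \ref{prop:sMCbnd}, Cor.\ \ref{cor:sMCcomp}, Prop.\ \ref{prop:sMC-fib} and Thm.\ \ref{thm:sMC-exact} all lie downstream of Thm.\ \ref{thm:sMCnil} and are off limits (you correctly avoid them). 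The homotopical skeleton --- $r \maps \Om_n \to \Om(\horn{n}{k})$ is a surjective quasi-isomorphism, $\id_L \ctensor r$ is an acyclic fibration by Lemma \ref{lem:ctensor}, the comparison map $L \ctensor \Om_n \to P_{n,k}$ is an acyclic fibration by two-out-of-three plus a filtration-wise surjectivity check (which you should spell out: lift through $\Ph \ctensor \id_{\Om_n}$ first, then correct by an element of $\ker\Ph \ctensor \Om_n$ using extendability of $\Om_\bullet$ applied to $\ker\Ph$), and then surjectivity of $\MC$ of the comparison via Cor.\ \ref{cor:acyc-retract} --- is all sound.

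The genuine gap is the one you flag but do not close: the identification $\Hom_{\sSet}(\horn{n}{k},\sMC(L)) \cong \MC\bigl(L \ctensor \Om(\horn{n}{k})\bigr)$, i.e.\ that $\MC(L \ctensor -)$ carries the finite limit of cdgas defining $\Om(\horn{n}{k})$ to the corresponding limit of sets. This is not a formality --- it is exactly where the nilpotence/completeness of $L$ and the exactness of $-\tensor_\kk A$ enter, and it is the technical heart of Getzler's proof --- so deferring it to ``a filtration induction'' leaves the argument incomplete as written. (It is true, and provable by reducing to the bounded quotients $L/\cF_nL$ where Prop.\ \ref{prop:nilbnd-cdga} removes the completion, but it must be done.) By contrast, your second deferred identification, $\MC(P_{n,k})$ with the fiber product of $\MC$-sets, need not be reproved from scratch: it is precisely Prop.\ \ref{prop:MC-pb}, whose proof via the explicit maps $H$, $J$ and the exponential identities is independent of Thm.\ \ref{thm:sMCnil} and can be cited without circularity. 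With those two points supplied, your argument would constitute a legitimate alternative to Getzler's original induction on the lower central series, at the cost of front-loading more of the CFO machinery; Getzler's iterative solution of the Maurer--Cartan equation along the contracting homotopy of $r$ remains the more self-contained option.
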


Recall from \eqref{eq:bnd-nil} the discussion of the forgetful functor 
$\bndfltstLie \xto{\jmath_{\str}} \nilstLie$ which sends a bounded filtered $\cinf$-algebra to its underlying nilpotent $\sinf$-algebra. Remark \ref{rmk:bnd-nil} implies that
$\MC(L) = \MC \bigl(\jmath_{\str}(L)\bigr)$. Therefore, from Prop.\ \ref{prop:nilbnd-cdga}, we have 
$\MC(L \ctensor B) = \MC \bigl(\jmath_{\str}(L) \tensor B\bigr)$ 
for every $\LdQ{} \in \bdfltstLie$ and every $(B,\del)\in \cdga$. The next proposition is just for the purposes of bookkeeping. It follows immediately from Thm.\ \ref{thm:sMCnil}. 

\begin{proposition} \label{prop:sMCbnd}
Let $\Ph=\sPh \maps \LdQ{} \fib \LdQ{\prime}$ be a strict fibration in $\bdfltstLie$. Then
$\sMC(\Ph) \maps \sMC(L) \to \sMC(L')$.
is a Kan fibration. In particular, $\sMC(L)$ is a Kan simplicial set for all $\LdQ{} \in \bdfltstLie$
\end{proposition}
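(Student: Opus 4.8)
The plan is to reduce the statement directly to Getzler's Theorem \ref{thm:sMCnil} by means of the forgetful functor $\jmath_{\str} \maps \bndfltstLie \to \nilstLie$. The essential input is a natural isomorphism of simplicial sets
\[
\sMC(L) \cong \sMC\bigl(\jmath_{\str}(L)\bigr), \qquad L \in \bdfltstLie.
\]
To produce it, I would note that if $\LdQ{}$ is bounded filtered at $N$, then so is $L \tensor \Omega_n$ for every $n$, and Prop.\ \ref{prop:nilbnd-cdga} supplies an isomorphism $L \ctensor \Omega_n \cong L \tensor \Omega_n$ of $L_\infty$-algebras. Together with the identity $\MC(L \ctensor \Omega_n) = \MC\bigl(\jmath_{\str}(L) \tensor \Omega_n\bigr)$ recorded before the proposition (a consequence of Remark \ref{rmk:bnd-nil}), this identifies the $n$-simplices of $\sMC(L)$ with those of the Getzler model $\sMC(\jmath_{\str}(L))$. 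Since the isomorphism of Prop.\ \ref{prop:nilbnd-cdga} is natural in the cdga argument, it is compatible with the cosimplicial structure maps of $\{\Omega_n\}_{n \geq 0}$, and hence upgrades to an isomorphism of simplicial sets.

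Next I would check that $\jmath_{\str}(\Ph)$ is a strict $L_\infty$-epimorphism in the sense of Def.\ \ref{def:Linf-maps-def}, so that Theorem \ref{thm:sMCnil} applies. By hypothesis $\Ph = \sPh$ is a strict fibration in $\bndfltstLie$, so its linear term restricts to a degreewise surjection $\cF_n \sPhi_1 \maps \cF_n L \fib \cF_n L'$ for every $n \geq 1$; taking $n = 1$ and using $\cF_1 L = L$ shows that $\sPhi_1 \maps L \to L'$ is itself degreewise surjective. As $\jmath_{\str}$ preserves both strictness and the linear term, $\jmath_{\str}(\Ph)$ is precisely a strict $L_\infty$-epimorphism of nilpotent $L_\infty$-algebras.

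Combining these two observations, Theorem \ref{thm:sMCnil} gives that $\sMC(\jmath_{\str}(\Ph))$ is a Kan fibration. Under the simplicial isomorphisms $\sMC(L) \cong \sMC(\jmath_{\str}(L))$ and $\sMC(L') \cong \sMC(\jmath_{\str}(L'))$ from the first step --- which I would verify intertwine $\sMC(\Ph)$ with $\sMC(\jmath_{\str}(\Ph))$, using that the isomorphism of Prop.\ \ref{prop:nilbnd-cdga} is natural in $\Ph$ --- the map $\sMC(\Ph)$ is obtained from a Kan fibration by conjugation with isomorphisms, hence is itself a Kan fibration. The final ``in particular'' assertion is then immediate: the Kan property of $\sMC(L)$ for every bounded filtered $L_\infty$-algebra follows from the Kan property of $\sMC(\jmath_{\str}(L))$ guaranteed by Getzler's theorem, transported across the isomorphism of the first step (equivalently, by applying the fibration statement to the unique map $L \fib \ast$ onto the terminal object, whose image is $\sMC(L) \to \Delta^0$).

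The only genuine obstacle is the bookkeeping in the first step: confirming that the level-wise identifications $\MC(L \ctensor \Omega_n) = \MC(\jmath_{\str}(L) \tensor \Omega_n)$ assemble into a map of simplicial sets, i.e.\ that they commute with all face and degeneracy operators, and that they are natural in the strict morphism $\Ph$. Both reduce to the naturality of the isomorphism $-\ctensor B \cong -\tensor B$ of Prop.\ \ref{prop:nilbnd-cdga} in the variable $B$ and its compatibility with the functoriality of $\MC(-)$. Once this is in place, all the analytic content --- namely the horn-filling for the simplicial Maurer--Cartan set of a nilpotent $L_\infty$-algebra --- is already carried by Theorem \ref{thm:sMCnil}, and nothing further is required.
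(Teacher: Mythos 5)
Your proposal is correct and follows exactly the paper's route: the paper's own (one-line) proof likewise reduces to Getzler's Theorem \ref{thm:sMCnil} via the forgetful functor $\jmath_{\str}$, using the identification $\MC(L \ctensor \Omega_n) = \MC(\jmath_{\str}(L) \tensor \Omega_n)$ from Remark \ref{rmk:bnd-nil} and Prop.\ \ref{prop:nilbnd-cdga}, together with the observation that every strict fibration is a strict $L_\infty$-epimorphism. You have simply written out in full the bookkeeping that the paper declares immediate.
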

% \begin{proof}
% Every strict fibration in $\FLie$ is obviously a strict $\infty$-epimorphism.
% \end{proof}
% ;
\begin{corollary} \label{cor:sMCcomp}
Let $\LdQ{} \in \cLie$. Then $\sMC(L)$ is a Kan simplicial set, and the functor \eqref{eq:sMC} factors as $\sMC \maps \cLie \to \Kan$.
\end{corollary}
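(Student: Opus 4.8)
The plan is to realize $\sMC(L)$ as the limit of a tower of Kan fibrations between Kan complexes and then invoke Prop.~\ref{prop:towmodcat}. First I would record that for each $n \geq 1$ the quotient $(\qf{L}{n}, \qq{d}{n}, \qq{Q}{n})$ is bounded filtered at $N=n$ by Prop.~\ref{prop:quot-filt}(\ref{item:propqf1}), so that Prop.~\ref{prop:sMCbnd} (ultimately Getzler's Thm.~\ref{thm:sMCnil}) guarantees that $\sMC(\qf{L}{n})$ is a Kan complex. Next, the canonical surjections $\bp_{(n)} \maps \qf{L}{n+1} \to \qf{L}{n}$ furnished by Prop.~\ref{prop:quot-filt}(\ref{item:propqf2}) are strict fibrations in $\bdfltstLie$, being surjective strict filtered morphisms between bounded filtered $L_\infty$-algebras; hence each $\sMC(\bp_{(n)})$ is a Kan fibration, again by Prop.~\ref{prop:sMCbnd}. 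Applying $\sMC$ levelwise to the tower $\ttow(L)$ of \eqref{eq:towfunc} therefore yields a tower in $\Kan$ whose structure maps are Kan fibrations.

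The second ingredient is the natural identification $\sMC(L) \cong \plim_n \sMC(\qf{L}{n})$ at the level of simplicial sets. I would obtain this degreewise: for each $k$, one has $\sMC_k(L) = \MC(L \ctensor \Om_k)$ by \eqref{eq:sMC_obj}, and since $L \ctensor \Om_k$ is complete with filtration quotients $(L \ctensor \Om_k)/\cF_n \cong (\qf{L}{n}) \ctensor \Om_k$, Remark~\ref{rmk:MC}(3) gives $\MC(L \ctensor \Om_k) \cong \plim_n \MC((\qf{L}{n}) \ctensor \Om_k) = \plim_n \sMC_k(\qf{L}{n})$, where Prop.~\ref{prop:nilbnd-cdga} identifies $\ctensor$ with $\tensor$ in the bounded case. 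These isomorphisms are compatible with the face and degeneracy operators, so they assemble into an isomorphism of simplicial sets that is natural in $L$.

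Finally I would apply Prop.~\ref{prop:towmodcat}(\ref{item:towmodcat2}) with $\cat{M} = \sSet$ carrying the Kan--Quillen model structure, comparing the tower $X := \sMC(\ttow(L))$ with the constant terminal tower $Y$ having $Y_n = \ast$. Here $f_0 \maps \ast \to \ast$ is trivially a fibration, and because every $Y_n = \ast$ the canonical map $X_{n+1} \to X_n \times_{Y_n} Y_{n+1}$ reduces to the Kan fibration $\sMC(\bp_{(n)}) \maps X_{n+1} \to X_n$; the proposition then forces $\plim f \maps \sMC(L) \to \ast$ to be a Kan fibration, i.e.\ $\sMC(L)$ is fibrant. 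Since this holds for every object of $\cLie$ and morphisms are automatically carried to maps between Kan complexes, the functor $\sMC$ of \eqref{eq:sMC} factors through $\Kan$. I do not anticipate a genuine obstacle, as the statement is classical; the only point requiring care is verifying that the identification $\sMC(L) \cong \plim_n \sMC(\qf{L}{n})$ is an isomorphism of \emph{simplicial} sets, so that the levelwise tower of Kan fibrations genuinely computes $\sMC(L)$ and the limit theorem is applicable.
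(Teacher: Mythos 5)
Your proposal is correct and follows essentially the same route as the paper: the paper likewise deduces the result from Prop.\ \ref{prop:sMCbnd} applied to the tower of bounded filtered quotients together with the identification $\sMC(L) \cong \plim_n \sMC(\qf{L}{n})$, citing references for the details you spell out. The only difference is that you make the limit argument (via Prop.\ \ref{prop:towmodcat} against the constant terminal tower) and the degreewise verification of the simplicial isomorphism explicit, which the paper delegates to the literature.
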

\begin{proof}
This follows from Prop.\ \ref{prop:sMCbnd} and the fact that $\sMC(L) \cong \plim \sMC(\qf{L}{n})$. See, for example, \cite[Prop.\ 4.1]{Enhanced} and \cite[Thm.\ 6.9]{Markl-book}.
\end{proof}

\subsection{$\sMC$ as an exact functor} \label{sec:exact}
The goal of this subsection is to prove:
\begin{theorem} \label{thm:sMC-exact}
The simplicial Maurer-Cartan functor $\sMC \maps \cLie \to \Kan$ is an exact functor between categories of fibrant objects.
\end{theorem}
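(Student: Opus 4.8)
The plan is to verify the two defining conditions of a (left) exact functor from Def.~\ref{def:exact_functor}: that $\sMC$ preserves the terminal object, fibrations, and acyclic fibrations, and that it carries pullback squares along fibrations in $\cLie$ to pullback squares in $\Kan$. The organizing device will be the tower presentation $\sMC(L)\cong\plim_n\sMC(\qf{L}{n})$ of Cor.~\ref{cor:sMCcomp} together with Prop.~\ref{prop:towmodcat} applied to $\sSet$, which reduces each assertion to the bounded filtered quotients $\qf{L}{n}$, where Getzler's Prop.~\ref{prop:sMCbnd} is available. Preservation of the terminal object is immediate, since $\MC(0\ctensor\Om_n)=\{0\}$ forces $\sMC(0)=\ast$. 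Preservation of finite products is equally direct: the product structure \eqref{eq:product1} makes the curvature of $(a,b)$ equal to $\bigl(\curv(a),\curv(b)\bigr)$, so $\MC\bigl((L\oplus L')\ctensor\Om_n\bigr)=\MC(L\ctensor\Om_n)\times\MC(L'\ctensor\Om_n)$ at every simplicial level.

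For the pullback axiom I would first treat \emph{strict} fibrations. Given a strict fibration $\Phi=\sPh\maps(L,Q)\fib(L'',Q'')$ and an arbitrary $\Theta\maps(L',Q')\to(L'',Q'')$, Prop.~\ref{prop:strict-pb} supplies the explicit pullback $(\ti L,\ti Q)$ with $\ti L=L'\oplus\ker\sPh$ and $\ti Q=JQ_\dsum H$. The heart of the matter is a natural bijection $\MC(\ti L)\cong\MC(L)\times_{\MC(L'')}\MC(L')$. Given a compatible pair $(\al,\al')$, strictness of $\ppr'H$ forces the $L'$-component of the lift to be $\al'$, and a short computation with the maps \eqref{eq:H} forces the $\ker\sPh$-component to be $\al-\si\Theta_\ast(\al')$, which lies in $\ker\sPh$ precisely because $\sPh(\al)=\Theta_\ast(\al')$. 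That this element $\gamma$ is Maurer--Cartan follows cleanly from the exponential identities of Rem.~\ref{rmk:exp}: since $H$ is a coalgebra morphism, $H_\ast(\gamma)=(\al',\al)$, so $Q_\dsum H(\exp(\gamma)-1)=\exp(\al',\al)\cdot\bigl(\curv(\al'),\curv(\al)\bigr)=0$, whence $\curv_{\ti L}(\gamma)=\pr_{\ti L}JQ_\dsum H(\exp(\gamma)-1)=0$. Replacing $L$ by $L\ctensor\Om_n$ preserves both strictness and the pullback, so this holds levelwise. For a general fibration I would strictify via Prop.~\ref{prop:strict_fib} and Cor.~\ref{cor:pb}: the strictifying map $\Psi$ is an isomorphism, so $\sMC(\Psi)$ is an isomorphism and preservation of the strict pullback transports to the general one.

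To prove that $\sMC$ preserves fibrations I again strictify and then apply Prop.~\ref{prop:towmodcat}(2) to the tower $\sMC(\Phi)=\plim_n\sMC(\qq{\Phi}{n})$. The towers are $1$-reduced, so the base map is a fibration and it remains to check that each matching map $\sMC(\qf{L}{n+1})\to\sMC(\qf{L}{n})\times_{\sMC(\qf{L'}{n})}\sMC(\qf{L'}{n+1})$ is a Kan fibration. By the pullback preservation just established, the target is $\sMC(Q_n)$ for the bounded filtered pullback $Q_n=\qf{L}{n}\times_{\qf{L'}{n}}\qf{L'}{n+1}$, and the matching map is $\sMC(c_n)$ for the canonical comparison $c_n\maps\qf{L}{n+1}\to Q_n$. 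Formula \eqref{eq:strict-pb1}, applied to the strict data $\bp_{(n)}$, $\qq{\Phi}{n+1}$ and the strict $J$ (strict because $\Theta=\bp'_{(n)}$ is strict), shows $c_n$ is strict; its linear term is the chain-level comparison into the pullback, which is degreewise surjective on each filtration piece precisely because $\sPh$ is a fibration in $\cCh$ (Rem.~\ref{rmk:cCh}). Hence $c_n$ is a strict fibration between bounded filtered $L_\infty$-algebras, so $\sMC(c_n)$ is a Kan fibration by Prop.~\ref{prop:sMCbnd}, and Prop.~\ref{prop:towmodcat}(2) yields that $\sMC(\Phi)$ is a Kan fibration. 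Finally, an acyclic fibration in $\cLie$ is in particular a weak equivalence, so $\sMC$ sends it to a weak homotopy equivalence by the $L_\infty$ Goldman--Millson theorem (Thm.~1.1 of \cite{GM_Theorem}); combined with the fibration case this gives acyclic Kan fibrations, and the remaining consequence that $\sMC$ preserves weak equivalences follows formally from exactness.

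I expect the main obstacle to be the strict-fibration pullback step, both in identifying $\MC(\ti L)$ with the set-theoretic fiber product and in confirming the Maurer--Cartan condition for the lift; the exponential-identity argument of Rem.~\ref{rmk:exp} is what keeps this tractable, and it must be applied to the \emph{non-strict} morphism $\ppr H$ with due care for convergence in the complete setting. A secondary delicate point is the bookkeeping of the tower step, where one must simultaneously invoke pullback preservation to identify the matching target, strictness of the comparison $c_n$, and the $\cCh$-fibration property of its linear term before Getzler's theorem can be brought to bear.
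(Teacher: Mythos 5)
Your proposal is correct, and the product, weak-equivalence, and pullback-preservation parts coincide with the paper's argument: the paper's Prop.\ \ref{prop:MC-pb} proves exactly the bijection $\MC(\ti L\tensor B)\cong\MC(L\tensor B)\times_{\MC(L''\tensor B)}\MC(L'\tensor B)$ you describe, using the same maps $H$, $J$ from \eqref{eq:H}--\eqref{eq:J} and the same exponential identities of Remark \ref{rmk:exp} (the paper reduces to the bounded filtered quotients before tensoring, which sidesteps the convergence care you flag for $L\ctensor\Om_n$). Where you genuinely diverge is the fibration step. The paper's Prop.\ \ref{prop:sMC-fib} does \emph{not} identify the matching object $\sMC(\qf{L}{n})\times_{\sMC(\qf{L'}{n})}\sMC(\qf{L'}{n+1})$ with $\sMC$ of a pullback in $\bdfltLie$; instead it runs a direct horn-filling argument: lift the horn through $\sMC(\qq{\bp}{n})$, measure the failure of the second compatibility by an element $\eta$ of the abelian kernel $\cF_nL'/\cF_{n+1}L'\tensor\Om_m$, use Lemma \ref{lem:bnd-curv} to see that $\eta$ is a degree-$0$ cocycle, lift it through the Kan fibration of abelian $L_\infty$-algebras induced by $\cF_n\qq{(\Phi^1_1)}{n+1}$, and correct the filler by the resulting $\tha$. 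Your route --- establish pullback preservation first, observe that both $\qq{\Phi}{n}$ and $\qq{\bp'}{n}$ are strict fibrations of bounded filtered algebras so the matching object is $\sMC(Q_n)$, check via \eqref{eq:strict-pb1} that the comparison $c_n\maps\qf{L}{n+1}\to Q_n$ is a strict fibration bounded at $n+1$, and invoke Getzler's theorem (Prop.\ \ref{prop:sMCbnd}) --- is logically sound (there is no circularity, since the pullback computation is purely set-theoretic and independent of fibration preservation) and arguably more economical, outsourcing the obstruction-correction to Getzler's nilpotent result rather than redoing it by hand. What the paper's explicit argument buys in exchange is independence of the order of the propositions and a self-contained illustration of the curvature identities of Lemma \ref{lem:bnd-curv} that are also used elsewhere (Sec.\ \ref{sec:obsthy}); what yours buys is a shorter proof once the pullback statement is in place.
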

Note that both $\cLie$ and $\Kan$ satisfy the additional axioms \eqref{EA1} and \eqref{EA2}. Hence, Thm.\ \ref{thm:sMC-exact} combined with Prop.\ \ref{prop:exact-hpb} immediately implies:
\begin{corollary}\label{cor:MC-hpb}
The functor $\sMC \maps \cLie \to \Kan$ preserves homotopy pullbacks, up to homotopy equivalence.
\end{corollary}

Our proof of Thm.\ \ref{thm:sMC-exact} will proceed by directly
verifying the axioms of Def.\ \ref{def:exact_functor} via the following collection of propositions. 

\begin{proposition}\label{prop:sMC-prod}
Let $\LdQ{},\LdQ{\prime} \in \cLie$. Then $\sMC(L \times L') \cong \sMC(L) \times \sMC(L')$.

\end{proposition}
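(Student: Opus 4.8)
The plan is to reduce the statement to two facts and then assemble them. The first fact is that the completed tensor product with a cdga distributes over the finite product in $\cLie$, i.e. $(L \dsum L') \ctensor B \cong (L \ctensor B) \dsum (L' \ctensor B)$ for every $B \in \cdga$; the second is that the Maurer--Cartan functor $\MC \maps \cLie \to \Set$ sends finite products to products of sets. Granting these, for each $n \geq 0$ the definition \eqref{eq:sMC_obj} gives
\[
\sMC_n(L \dsum L') = \MC\bigl((L \dsum L') \ctensor \Om_n\bigr) \cong \MC\bigl((L \ctensor \Om_n) \dsum (L' \ctensor \Om_n)\bigr) \cong \sMC_n(L) \times \sMC_n(L').
\]
Since every identification here will be induced by the strict projections $\Pr, \Pr'$ of Sec.\ \ref{sec:product}, which do not depend on $n$, compatibility with the cosimplicial structure on $\{\Om_n\}$ is automatic, and the levelwise bijections assemble into an isomorphism of simplicial sets.

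First I would establish the distributivity. Comparing the product structure maps \eqref{eq:product1} with the tensored structure maps \eqref{eq:QB}, one checks that the evident linear isomorphism $(L \dsum L') \tensor B \xto{\cong} (L \tensor B) \dsum (L' \tensor B)$ intertwines the codifferential $(Q_\dsum)_B$ with the product codifferential of $(L \tensor B) \dsum (L' \tensor B)$, and it is filtration preserving because $\cF_n\bigl((L \dsum L') \tensor B\bigr) = (\cF_n L \tensor B) \dsum (\cF_n L' \tensor B)$. Applying the completion functor \eqref{eq:Liecompfunc} and using that completion preserves finite products (it is computed levelwise by $\wh{(-)}$, and the projective limit of Sec.\ \ref{sec:comp} commutes with finite direct sums), this isomorphism of filtered $L_\infty$-algebras descends to the asserted isomorphism in $\cLie$.

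The conceptual heart, though routine, is the second fact, which follows immediately from the curvature formula \eqref{eq:curv}. For complete $M, M' \in \cLie$ and $(a,a') \in (M \dsum M')^0 = M^0 \dsum (M')^0$, the componentwise form of the product brackets \eqref{eq:product1} together with $d_\dsum = d \dsum d'$ yields
\[
\curv_\dsum(a,a') = \bigl(\curv_M(a),\, \curv_{M'}(a')\bigr),
\]
so $(a,a') \in \MC(M \dsum M')$ if and only if $a \in \MC(M)$ and $a' \in \MC(M')$. This bijection is precisely the map $\bigl(\Pr_\ast, \Pr'_\ast\bigr)$ induced by functoriality \eqref{eq:MC-func} of $\MC$ along the strict projections, since strictness forces $\Pr_\ast(a,a') = a$ and $\Pr'_\ast(a,a') = a'$ via \eqref{eq:MC_F}. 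I do not expect a genuine obstacle in this argument: the proposition is essentially formal once distributivity is in hand, and the only point demanding a little care is verifying that all the identifications are induced by the (simplicial-degree-independent) projections so that naturality in $n$ comes for free.
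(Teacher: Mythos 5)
Your proposal is correct and follows essentially the same route as the paper: the paper's proof likewise observes that $\curv_\dsum = \curv_L \times \curv_{L'}$ from \eqref{eq:product1} and then reduces to the natural identification $\MC\bigl((L \oplus L') \tensor B\bigr) \cong \MC(L \tensor B) \times \MC(L' \tensor B)$ for $B \in \cdga$, which it leaves as a standard exercise (citing Markl). You have simply spelled out the distributivity of $\ctensor$ over finite products and the naturality in $n$ that the paper elides.
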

\begin{proof}
From the definition \eqref{eq:product1} for the product in $\cLie$ we have
$\curv_{\times} = \curv_{L} \times \curv_{L'}$, where $\curv_{\times}$ is the curvature function for
$(L \times L', d_{\times}, Q_{\times})$. From here, it is straightforward to show that there is a natural isomorphism of sets $\MC \bigl ( (L' \times L) \ctensor B \bigr) \cong \MC(L \ctensor B) \times \MC(L' \ctensor B)$
for any $L,L' \in \cLie$ and $B \in \cdga$. See, for example, \cite[Ex.\ 6.17]{Markl-book}.
\end{proof}

Let us next address weak equivalences.  In previous work \cite{GM_Theorem} with V.\ Dolgushev, the following generalization of the classical Goldman-Millson theorem from deformation theory was established.

\begin{proposition}[Theorem 1.1 \cite{GM_Theorem}] \label{prop:GM}
Let $\Phi \maps \LdQ{} \weq \LdQ{\prime}$ be a weak equivalence in $\cLie$. Then
$\sMC(\Phi) \maps \sMC(L) \to \sMC(L')$ is a homotopy equivalence between simplicial sets.
\end{proposition}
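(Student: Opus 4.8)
The plan is to deduce this from the homotopy-inverse result established independently in Theorem~\ref{thm:Linf-qinv}, rather than by a direct analysis of Maurer--Cartan sets. The idea is that the simplicial Maurer--Cartan functor carries a \emph{right homotopy} in $\cLie$ to a simplicial homotopy in $\sSet$; granting this, the conclusion is immediate. Indeed, by Theorem~\ref{thm:Linf-qinv} the weak equivalence $\Phi$ admits a weak equivalence $\Psi \maps \LdQ{\prime} \weq \LdQ{}$ together with homotopies $\mathcal{H}_L \maps L \to L \ctensor \Om_1$ and $\mathcal{H}_{L'} \maps L' \to L' \ctensor \Om_1$ realizing $\Psi\Phi \simeq \id_L$ and $\Phi\Psi \simeq \id_{L'}$ through the functorial path objects of Prop.~\ref{prop:cLie-pathobj}. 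Applying $\sMC$ and using functoriality, $\sMC(\Psi)\sMC(\Phi) = \sMC(\Psi\Phi)$ would then be simplicially homotopic to $\id_{\sMC(L)}$, and symmetrically $\sMC(\Phi)\sMC(\Psi) \simeq \id_{\sMC(L')}$, exhibiting $\sMC(\Phi)$ as a homotopy equivalence of (Kan, by Cor.~\ref{cor:sMCcomp}) simplicial sets.

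The crux is therefore the following lemma: for any $M \in \cLie$ there is a natural simplicial map $\Theta_M \maps \sMC(M \ctensor \Om_1) \to \sMC(M)^{\Delta^1}$ compatible with the two endpoint evaluations $\ev_0,\ev_1 \maps \Om_1 \to \kk$. I would construct $\Theta_M$ in two steps. First, representability of $\sMC$: for a finite simplicial set $K$, continuity of $\MC$ together with Remark~\ref{rmk:MC}(3) and the fact that $M \ctensor -$ commutes with the relevant limits gives a natural isomorphism $\Hom_{\sSet}\bigl(K,\sMC(M)\bigr) \cong \MC\bigl(M \ctensor \Omega(K)\bigr)$, where $\Omega(K)$ denotes the cdga of polynomial de Rham forms on $K$; in particular $\sMC(M)^{\Delta^1}_n \cong \MC\bigl(M \ctensor \Omega(\Delta^1 \times \Delta^n)\bigr)$. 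Second, the cross-product (Eilenberg--Zilber) morphism supplies a cdga map $\Om_1 \tensor \Om_n \to \Omega(\Delta^1 \times \Delta^n)$, natural in $n$, so that applying the functor $\MC(M \ctensor -)$ yields the components $\MC(M \ctensor \Om_1 \ctensor \Om_n) \to \MC\bigl(M \ctensor \Omega(\Delta^1\times\Delta^n)\bigr)$ of $\Theta_M$. Composing $\sMC(\mathcal{H}_L)$ with $\Theta_L$ then produces the sought simplicial homotopy, whose endpoints are $\sMC(\Psi\Phi)$ and $\id_{\sMC(L)}$ precisely because the cross product intertwines $\ev_0,\ev_1$ with the two cofaces $\Delta^0 \to \Delta^1$.

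The main obstacle is the careful verification of this lemma: one must check that $\MC$ and $M \ctensor -$ genuinely commute with the limits computing $\Omega(K)$ (so that representability holds) and that the cross-product morphism restricts correctly on the vertices of $\Delta^1$. A more elementary but more laborious alternative, which also serves as a cross-check, is the tower argument: write $\sMC(L) \cong \plim_n \sMC(\qf{L}{n})$ (Remark~\ref{rmk:MC}(3)), observe that the bonding maps $\sMC(\bp_{(n)})$ are Kan fibrations by Prop.~\ref{prop:sMCbnd}, and apply Prop.~\ref{prop:towmodcat}(\ref{item:towmodcat1}) to $\sSet$ to reduce to showing that each $\sMC(\qq{\Phi}{n})$ is a weak homotopy equivalence. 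The latter follows by induction on $n$ using the fibration sequences of Prop.~\ref{prop:quot-filt} with abelian fibre $\cF_{n-1}L/\cF_nL$: on the base one invokes the inductive hypothesis, while on the fibre one uses that $\sMC$ of an abelian $L_\infty$-algebra is a Dold--Kan type space for which a quasi-isomorphism induces a weak equivalence, concluding via the five lemma applied to the associated long exact sequences of homotopy groups. The delicate point there is the uniform treatment of basepoints across all components when comparing the two fibration sequences.
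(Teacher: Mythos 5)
Your proposal is correct in outline, but it does not follow the route the paper relies on: Proposition \ref{prop:GM} is not proved in this paper at all --- it is imported as Theorem 1.1 of \cite{GM_Theorem}, whose proof (as Remark \ref{rmk:cLie-weqs} indicates) is essentially your ``more laborious alternative'': reduce along $\sMC(L)\cong\plim_n\sMC(\qf{L}{n})$ to the tower of quotients and induct, building on Getzler's theorem for nilpotent $L_\infty$-algebras, with the real work concentrated in surjectivity on $\pi_0$ and the comparison of fibers over matching basepoints --- there one must lift Maurer--Cartan elements by comparing obstruction classes in $H^1(\cF_{n-1}L/\cF_nL)\cong H^1(\cF_{n-1}L'/\cF_nL')$, which the five lemma alone does not deliver and which is exactly where the hypothesis that each $\cF_n\sPhi_1$ is a quasi-isomorphism enters. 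Your primary route --- deduce the statement from Theorem \ref{thm:Linf-qinv} together with a lemma that $\sMC$ carries right homotopies through $L\ctensor\Om_1$ to simplicial homotopies --- is genuinely different and is not circular: Theorem \ref{thm:Linf-qinv} rests only on the CFO structure of $\cLie$, which is established with no reference to $\sMC$, and the auxiliary results you invoke from Section 6 (Cor.\ \ref{cor:sMCcomp}, Prop.\ \ref{prop:sMCbnd}) are likewise independent of Prop.\ \ref{prop:GM}. The key lemma holds as you describe: for finite $K$ one has $\Hom_{\sSet}\bigl(K,\sMC(M)\bigr)\cong\MC\bigl(M\ctensor\Omega(K)\bigr)$ because $\MC$ and $M\ctensor-$ preserve the finite limits computing $\Omega(K)$ over a field, and the cross product $\Om_1\tensor\Om_n\to\Omega(\Delta^1\times\Delta^n)$ is a cdga map, natural in $n$, restricting to $\ev_0,\ev_1$ on the vertices of $\Delta^1$. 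What your approach buys is a conceptual proof that bypasses the obstruction-theoretic induction entirely; what it costs is dependence on the full strength of Theorems 1 and 2 of this paper (the hard content being relocated into Prop.\ \ref{prop:Ch-acyc} and Cor.\ \ref{cor:acyc-retract}), whereas \cite{GM_Theorem} predates and is independent of them. Two small points to make explicit: that simplicial homotopy of maps into the Kan complexes $\sMC(L)$, $\sMC(L')$ is an equivalence relation compatible with composition, so the two homotopies really do exhibit $\sMC(\Phi)$ as a homotopy equivalence; and the identification $(M\ctensor\Om_1)\ctensor\Om_n\cong M\ctensor(\Om_1\tensor\Om_n)$ needed to read $\sMC(\mathcal{H}_L)$ as landing in the source of $\Theta_L$.
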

Hence, it remains to verify that $\sMC$ preserves fibrations and pullbacks of fibrations.
We split the work across the next two subsections.

\subsubsection{$\sMC$ preserves fibrations}
Our strategy for verifying that $\sMC$ preserves fibrations involves combining the ``strictification''
result of Prop.\ \ref{prop:strict_fib} along with the results from Sec.\ \ref{sec:towmodcat} on towers in CFOs.   
 
\begin{proposition} \label{prop:sMC-fib} 
Let $\Ph \maps (L,d,Q) \fib (L',d',Q')$ be a fibration in $\cLie$. Then the simplicial map
$\sMC(\Ph) \maps \sMC(L) \to \sMC(L')$ is a Kan fibration.
\end{proposition}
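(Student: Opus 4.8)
The plan is to reduce the problem to the bounded filtered case—where Getzler's theorem, packaged in Proposition \ref{prop:sMCbnd}, already produces Kan fibrations—and then reassemble everything via the tower machinery of Proposition \ref{prop:towmodcat}. First I would apply the strictification result, Proposition \ref{prop:strict_fib}: there is an isomorphism $\Psi \maps (\bev{L},\bev{Q}) \xto{\cong} (L,Q)$ in $\cLie$ such that $\Phi\Psi$ is a strict fibration with linear term $\sPhi_1$. Since $\sMC$ is a functor carrying isomorphisms to isomorphisms of simplicial sets, $\sMC(\Phi)$ is a Kan fibration if and only if $\sMC(\Phi\Psi)$ is. Hence I may assume from the outset that $\Phi=\sPhi_1$ is strict.

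Next I would pass to the tower of quotients. By Proposition \ref{prop:quot-filt}, each $\qq{\Phi}{n} \maps (\qf{L}{n},\qq{Q}{n}) \to (\qf{L'}{n},\qq{Q'}{n})$ is a strict morphism between $L_\infty$-algebras bounded filtered at $n$, and strictness together with surjectivity of $\cF_j\sPhi_1$ for all $j$ shows each $\qq{\Phi}{n}$ is a fibration in $\bdfltstLie$. Applying $\sMC$ and using the natural identification $\sMC(L) \cong \plim_n \sMC(\qf{L}{n})$ (Corollary \ref{cor:sMCcomp} and Remark \ref{rmk:MC}), I obtain $\sMC(\Phi) \cong \plim_n \sMC(\qq{\Phi}{n})$ as a morphism of towers of Kan complexes in which, by Proposition \ref{prop:sMCbnd}, every level map $\sMC(\qq{\Phi}{n})$ and every structure map $\sMC(\bp_{(n)})$, $\sMC(\bp'_{(n)})$ is a Kan fibration. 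I would then invoke Proposition \ref{prop:towmodcat}(2) for $\cat{M}=\sSet$: the base level is trivial since the tower is $1$-reduced with $\qf{L}{1}=\qf{L'}{1}=0$, so that $\sMC(\qf{L}{1})=\sMC(\qf{L'}{1})=\ast$, and it remains only to check the matching condition, namely that for each $n$ the induced map
\[
\sMC(\qf{L}{n+1}) \to \sMC(\qf{L}{n}) \times_{\sMC(\qf{L'}{n})} \sMC(\qf{L'}{n+1})
\]
is a Kan fibration.

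To verify the matching condition I would identify the target with $\sMC$ of the pullback $P:=\qf{L}{n}\times_{\qf{L'}{n}}\qf{L'}{n+1}$ formed in $\cLie$ via Vallette's explicit construction (Proposition \ref{prop:strict-pb}) along the strict fibrations $\qq{\Phi}{n}$ and $\bp'_{(n)}$; note that $P$ is again bounded filtered (at $n+1$) and that the comparison morphism $c \maps \qf{L}{n+1}\to P$ is strict. The matching map then factors as $\sMC(c)$ followed by the canonical map $\sMC(P)\to \sMC(\qf{L}{n})\times_{\sMC(\qf{L'}{n})}\sMC(\qf{L'}{n+1})$. A filtration-degreewise diagram chase, using surjectivity of $\cF_j\sPhi_1$, shows that the linear term of $c$ restricts to an epimorphism on each piece of the filtration, so $c$ is a fibration in $\bdfltstLie$ and $\sMC(c)$ is a Kan fibration by Proposition \ref{prop:sMCbnd}. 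It therefore suffices to show that $\sMC$ preserves the pullback $P$; since $P$ and its three corners are bounded filtered, Proposition \ref{prop:nilbnd-cdga} lets me replace $-\ctensor\Om_m$ by the exact functor $-\tensor\Om_m$, and the claim reduces to the statement that the set-valued functor $\MC$ carries pullbacks of strict fibrations of bounded filtered $L_\infty$-algebras to pullbacks of sets.

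The hard part will be precisely this last point—that $\sMC$, equivalently $\MC$, preserves the relevant pullbacks—since the $L_\infty$-structure on Vallette's pullback is a linear conjugate of the direct-sum structure and the curvature function must be tracked through this conjugation. This is the technical heart shared with the following subsection on pullback preservation, and I expect to settle it by the same explicit computation, combining the product formula of Proposition \ref{prop:sMC-prod} with the section-based description of $P$ in Proposition \ref{prop:strict-pb} to exhibit a natural bijection of Maurer--Cartan sets compatible with the simplicial structure.
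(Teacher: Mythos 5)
Your reduction is the same as the paper's up to and including the matching condition: strictify via Prop.\ \ref{prop:strict_fib}, pass to the tower of quotients, apply Prop.\ \ref{prop:sMCbnd} levelwise, and invoke Prop.\ \ref{prop:towmodcat}(2) so that everything rests on showing $\sMC(\qf{L}{n+1}) \to \sMC(\qf{L}{n}) \times_{\sMC(\qf{L'}{n})} \sMC(\qf{L'}{n+1})$ is a Kan fibration. At that point you diverge. The paper attacks this map by a bare-hands horn-filling argument: lift the horn through $\sMC(\bp_{(n)})$ to get some $\al$, measure the failure $\eta = \be' - \qq{\phi}{n+1}(\al)$, observe via Lemma \ref{lem:bnd-curv} that $\eta$ is a degree-$0$ cocycle in the abelian kernel $\cF_nL'/\cF_{n+1}L' \tensor \Om_m$, lift it through the Kan fibration on abelian fibers, and correct $\al$ additively (using Lemma \ref{lem:bnd-curv} again to see the correction stays Maurer--Cartan). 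You instead realize the target as $\sMC$ of the Vallette pullback $P = \qf{L}{n}\times_{\qf{L'}{n}}\qf{L'}{n+1}$ in $\bdfltstLie$, check that the comparison map $c \maps \qf{L}{n+1} \to P$ is a strict fibration (your filtration-degreewise chase; note that $J$ is linear here because $\qq{\Ph}{n}$ is strict, so $c$ is indeed strict), apply Prop.\ \ref{prop:sMCbnd} to $c$, and then lean on the pullback-preservation statement, which is exactly Prop.\ \ref{prop:MC-pb} of the paper. Since Prop.\ \ref{prop:MC-pb} is proved in the paper by the $\exp$/conjugation computation without any appeal to Prop.\ \ref{prop:sMC-fib}, your route is not circular; it just reorders the logical dependencies so that fibration-preservation becomes a corollary of pullback-preservation plus Getzler's theorem. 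What your approach buys is modularity and the elimination of the explicit horn-filling; what it costs is that the "hard part" you flag (tracking curvature through the conjugation by $H$ and $J$) must be carried out in full before this proposition is available, whereas the paper's inline obstruction argument is self-contained and only needs the elementary curvature identities of Lemma \ref{lem:bnd-curv}. Both arguments ultimately exploit the same two facts: surjectivity of $\cF_n\sPhi_1$ and the abelianness of $\ker \bp_{(n)}$.
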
 

\begin{proof}
By Prop.\ \ref{prop:strict_fib}, it suffices to consider the case when $\Ph=\sPh$ is a strict fibration in $\cLie$. Applying the functor $\ttow \maps \cLie \to \tbndfiltLie$ gives us a morphism, as in statement 4 of Prop.\ \ref{prop:quot-filt},
% \eqref{eq:tower-morph}, 
between towers of bounded  $\cinf$-algebras,
in which all horizontal and all vertical morphisms are strict fibrations.
It then follows from Prop.\ \ref{prop:sMCbnd} that we obtain a map of towers in $\Kan$ in which all 
horizontal and all vertical maps are Kan fibrations 
\begin{equation}\label{diag:L_tow}
\begin{tikzpicture}[operad style,row sep=2em,column sep=3em,]
\matrix (m) [matrix of math nodes]
{  
\cdots \&  \sMC (L / \cF_{n}L) \& \sMC(L / \cF_{n-1}L)  \&  \cdots \\
\cdots \&  \sMC(\pri{L} / \cF_{n}\pri{L}) \& \sMC(\pri{L} / \cF_{n-1} \pri{L})  \&  \cdots \\
}; 

\path[->>,font=\scriptsize] 
(m-1-2) edge node[auto,swap] {$\qq{\phi}{n} $} (m-2-2)
  (m-1-3) edge node[auto,swap] {$ \qq{\phi}{n-1} $} (m-2-3)
%  (m-1-5) edge node[auto,swap] {$ \qq{\phi}{2} $} (m-2-5)
;

\path[->>,font=\scriptsize] 
(m-1-1) edge node[auto] {$\qq{\ro}{n}$} (m-1-2)
(m-2-1) edge node[auto] {$\qq{\ro'}{n}$} (m-2-2)
%(m-1-4) edge node[auto] {$$} (m-1-5)
%(m-2-4) edge node[auto] {$$} (m-2-5)
%(m-1-5) edge node[auto] {$$} (m-1-6)
%(m-1-5) edge node[auto] {$$} (m-1-6)
%(m-2-5) edge node[auto] {$$} (m-2-6)
%(m-2-5) edge node[auto] {$$} (m-2-6)
%(m-1-6) edge node[auto] {$$} (m-2-6)
%(m-1-7) edge node[auto] {$$} (m-2-7)
(m-1-2) edge node[auto] {$\qq{\ro}{n-1}$} (m-1-3)
(m-2-2) edge node[auto] {$\qq{\ro'}{n-1}$} (m-2-3)
(m-1-3) edge node[auto] {$\qq{\ro}{n-2}$} (m-1-4)
(m-2-3) edge node[auto] {$\qq{\ro'}{n-2}$} (m-2-4)
%(m-1-6) edge node[auto] {$$} (m-1-7)
%(m-2-6) edge node[auto] {$$} (m-2-7)
;
\end{tikzpicture}
\end{equation}
Above, for the sake of brevity, we have used the notation 
$\qq{\phi}{k} := \sMC(\qq{\Ph}{k})$ and $\qq{\ro}{k} := \sMC(\qq{\bp}{k})$. Since all $\infty$-morphisms involved are strict, we have
\begin{equation*} %\label{eq:sMCfib-ids}
\qq{\phi}{k} = (\qq{\Ph}{k})^1_1\tensor \id_{\Om_\bul}, \qquad \qq{\ro}{k} = \qq{\bp}{k}\tensor \id_{\Om_\bul},
\qquad \qq{\ro'}{k} = \qq{\bp'}{k}\tensor \id_{\Om_\bul}.
\end{equation*}
We will use Prop.\ \ref{prop:towmodcat} to show that $\plim \qq{\phi}{k} \maps \sMC(L) \to \sMC(L')$ is a Kan fibration. 
Let $n \geq 1$. From diagram \ref{diag:towmodcat} in Prop.\ \ref{prop:towmodcat}, we see that it suffices to prove that the map
\begin{equation*} %\label{eq:sMC-fib1} 
(\qq{\ro}{n},\qq{\phi}{n+1}) \maps \sMCq{n+1} \to \sMCq{n} \times_{\sMCqq{n}} \sMCqq{n+1}
\end{equation*}
is a fibration. So suppose we have a horn  $\gamma \maps   \horn{m}{k} \to \sMCq{n+1}$ and commuting diagrams of simplical sets
\begin{equation}\label{diag:sMC-fib-horns}
\begin{tikzpicture}[operad style,row sep=2em,column sep=2em,]
\matrix (m) [matrix of math nodes]
 {  
\horn{m}{k} \& \sMCq{n+1} \\
\Delta^m \& \sMCq{n}\\
}; 
  \path[->,font=\scriptsize] 
   (m-1-1) edge node[auto] {$\gamma$} (m-1-2)
   (m-1-1) edge node[auto] {$$} (m-2-1)
   (m-2-1) edge node[auto] {${\beta}$} (m-2-2)

  ;
  \path[->>,font=\scriptsize] 
   (m-1-2) edge node[auto] {$\sMCr{n}$} (m-2-2)
;
\end{tikzpicture}
\quad 
\begin{tikzpicture}[operad style,row sep=2em,column sep=2em,]
\matrix (m) [matrix of math nodes]
  {  
\horn{m}{k} \& \sMCq{n+1} \\
\Delta^m \& \sMCqq{n+1}\\
}; 
  \path[->,font=\scriptsize] 
   (m-1-1) edge node[auto] {$\gamma$} (m-1-2)
   (m-1-1) edge node[auto] {$$} (m-2-1)
   (m-2-1) edge node[auto] {$\pri{\beta}$} (m-2-2)
   (m-1-2) edge node[auto] {$\sMCp{n+1}$} (m-2-2)
  ;
\end{tikzpicture}
\quad
\begin{tikzpicture}[operad style,row sep=2em,column sep=2em,]
\matrix (m) [matrix of math nodes]
  {  
\Delta^m \& \sMCqq{n+1}\\
\sMCq{n} \& \sMCqq{n}\\
}; 
  \path[->,font=\scriptsize] 
   (m-1-1) edge node[auto] {$\beta'$} (m-1-2)
   (m-1-1) edge node[auto,swap] {$\beta$} (m-2-1)
   (m-2-1) edge node[auto] {$\sMCp{n}$} (m-2-2)
  ;

  \path[->>,font=\scriptsize] 
  (m-1-2) edge node[auto] {$\sMCrr{n}$} (m-2-2)
  ;
\end{tikzpicture}
\end{equation}
To complete the proof, we will construct an $m$-simplex $\ti{\alpha} \maps \Delta^m \to \sMCq{n+1}$ filling $\gamma$ and satisfying 
\begin{equation} \label{eq:sMC-fib2}
\sMCr{n}  \ti{\al} = \beta, \quad \sMCp{n+1} \ti{\al} = \pri{\beta}.
\end{equation}

% \myspace 

% \noindent \underline{Step (1)}:\\
To begin with, since $\sMCr{n}$ is a fibration, there exists $\al \maps \Simp{m} \to \sMCq{n+1}$ that fills $\ga$ and satisfies $\sMCr{n}\al = \be$. But, in general, $\sMCp{n+1}\al \neq \be'$.
By definition, $\al \in \MC(\qf{L}{n+1} \ctensor \Om_m)$. Proposition \ref{prop:quot-filt} implies that
$(\qf{L}{n+1}, \qq{d}{n+1}, \qq{Q}{n+1})$ is bounded at $n+1$. Hence,  by Prop.\ \ref{prop:nilbnd-cdga}, the $\cinf$-algebra $(\qf{L}{n+1} \tensor \Om_m, d_{\pp{n+1} \Om}, Q_{\pp{n+1} \Om})$
% \[
% (\qf{L}{n+1} \tensor \Om_m, d_{\pp{n+1} \Om}, Q_{\pp{n+1} \Om})
% \]
is also bounded  at $n+1$, and therefore already complete. So we may consider $\al$ as a vector in $(\qf{L}{n+1} \tensor \Om_m)^{0}$. Similarly, we may consider $\sMCp{n+1}(\al)$ and $\be'$ as vectors in $(\qf{L'}{n+1} \tensor \Om_m)^0$. Let $\eta \in  (\qf{L'}{n+1} \tensor \Om_m)^{0}$ denote the vector
\[
\eta := \be' - \sMCp{n+1}(\al). 
\] 
From diagrams \eqref{diag:L_tow} and \eqref{diag:sMC-fib-horns} we deduce that
$\eta \in \ker (\qq{\bp'}{n} \tensor \id_{\Om_m}) \cong \cF_{n}L' / \cF_{n+1}L' \tensor \Om_m.$

Next, recall from Prop.\ \ref{prop:quot-filt}, that the cochain complex $(\cF_{n}L' / \cF_{n+1}L', \qq{d'}{n+1})$ is an abelian sub-$\sinf$-algebra of 
$(\qf{L'}{n+1}, \qq{d'}{n+1}, \qq{Q'}{n+1})$.  We claim that 
\[
\eta \in \sMC_m \bigl(\cF_{n}L' / \cF_{n+1}L' \bigr) \cong Z^0\Bigl (\cF_{n}L' / \cF_{n+1}L' \tensor \Om_m, d'_{\pp{n+1} \Om} \Bigr). 
\]
Indeed, $(\qf{L'}{n+1} \tensor \Om_m, d'_{\pp{n+1} \Om}, Q'_{\pp{n+1} \Om})$ is a $\cinf$-algebra bounded at $n+1$ and $\eta$ has filtration degree $n$. Therefore Lemma \ref{lem:bnd-curv} implies that
\[
\curv'_{\pp{n+1} \Om}(\be') = \curv'_{\pp{n+1} \Om}\bigl(\sMCp{n+1}(\al) +\eta \bigr) = 
\curv'_{\pp{n+1} \Om}\bigl(\sMCp{n+1}(\al) \bigr) + d'_{\pp{n+1} \Om} \eta.
\] 
Hence, $d'_{\pp{n+1} \Om} \eta =0$, since $\be'$ and $\sMCp{n+1}(\al)$ are Maurer-Cartan elements.

Furthermore, since the diagrams from \eqref{diag:sMC-fib-horns} imply that both 
$\be'$ and $\sMCp{n+1}(\al)$ fill the horn $\sMCp{n+1}\ga$, the $m$-simplex $\eta$ fits into the commutative diagram of Kan complexes
\begin{equation} \label{diag:sMC-eta}
\begin{tikzpicture}[operad style,row sep=2em,column sep=2em,]
\matrix (m) [matrix of math nodes]
  {  
\horn{m}{k} \& \sMC(\cF_nL/\cF_{n+1}L) \\
\Delta^m \& \sMC(\cF_nL'/\cF_{n+1}L')\\
}; 
  \path[->,font=\scriptsize] 
   (m-1-1) edge node[auto] {$0$} (m-1-2)
   (m-1-1) edge node[auto] {$$} (m-2-1)
   (m-2-1) edge node[auto] {$\eta$} (m-2-2)
   %(m-2-1) edge node[auto] {$\tha$} (m-1-2)

  ;
  \path[->>,font=\scriptsize] 
   (m-1-2) edge node[auto] {$\sMCp{n+1} \vert_{\cF_nL} $} (m-2-2)
;
\end{tikzpicture}
\end{equation}
In the above diagram, the restriction $\sMCp{n+1} \vert_{\cF_nL}$ of $\sMCp{n+1}$ to the fiber $\sMC(\cF_nL/\cF_{n+1}L)$ is, equivalently, the Maurer-Cartan functor applied to the strict morphism
of abelian $\cinf$-algebra
\[
\cF_{n}\qq{(\Ph^1_1)}{n+1} \maps \bigl(\cF_nL/\cF_{n+1}L, \qq{d}{n+1} \bigr) \to 
\bigl(\cF_nL'/\cF_{n+1}L', \qq{d'}{n+1} \bigr). 
\] 
Since $\Phi=\Ph^1_1$ is a fibration, the linear map $\cF_n \Phi^1_1 \maps \cF_nL \to \cF_nL'$ is surjective. Therefore $\cF_{n}\qq{(\Ph^1_1)}{n+1}$ is a fibration in $\bdfltstLie$, and so,  
by Prop.\ \ref{prop:sMCbnd} we conclude that $\sMCp{n+1} \vert_{\cF_nL}$ is a Kan fibration.

Finally, let $\tha \maps \Delta^m \to \sMC(\cF_nL/\cF_{n+1}L)$ be a lift of $\eta$ 
through the fibration $\sMCp{n+1} \vert_{\cF_nL}$ in diagram \eqref{diag:sMC-eta} such that
$\tha \vert_{\horn{m}{k}} =0$. Define $\ti{\al} \in (\qf{L}{n+1} \tensor \Om_m)^0$ to be the vector
\[
\ti{\al}:= \al + \tha.
\]
We claim that $\ti{\al} \in \MC(\qf{L}{n+1} \tensor \Om_m)$. Indeed, since $\al$ is a Maurer-Cartan element and since 
\[
\tha \in Z^0(\cF_nL/\cF_{n+1}L\tensor \Om_m, d_{\pp{n+1} \Om})
\]
has filtration degree $n$ in the bounded $\cinf$-algebra $\bigl(\qf{L}{n+1} \tensor \Om_m, d_{\pp{n+1} \Om}, Q_{\pp{n+1} \Om}\bigr)$, Lemma \ref{lem:bnd-curv} implies that
\[
\curv_{\pp{n+1} \Om}(\ti{\al}) = \curv_{\pp{n+1} \Om}(\al) + d_{\pp{n+1} \Om} \tha =0.
\]
Hence, we have an $m$-simplex $\ti{\al} \maps \Delta^m \to \sMCq{n+1}$ which, by construction, fills $\gamma$ and satisfies the equalities \eqref{eq:sMC-fib2}. This completes the proof.
\end{proof}

\subsubsection{$\sMC$ preserves pullbacks of fibrations} \label{sec:MC-pb}
Lastly, we prove that the simplicial Maurer-Cartan functor preserves pullbacks of fibrations.
As in the proof of Prop.\ \ref{prop:sMC-fib}, we exploit the fact that
every fibration in $\cLie$ can be ``strictified''. Let $\Ph \maps (L,d,Q) \fib (L'',d'',Q'')$ be a fibration in $\cLie$ and $\Tha \maps (L',d', Q') \to (L'',d'',Q'')$ be an arbitrary morphism. Let $(\ti{L},\ti{d},\ti{Q})$ be the pullback of the diagram
$(L',d',Q') \xto{\Tha} (L'',d'',Q'') \xleftarrow{\Ph} (L,d,Q)$.  We verify that $\sMC(\ti{L})$ is the pullback of the diagram
$\sMC(L') \xto{\sMC(\Tha)} \sMC(L'') \xleftarrow{\sMC(\Ph)} \sMC(L)$ in $\Kan$.

We consider the special case when $\Ph =\sPh \maps (L,d,Q) \fib (L'',d'',Q'')$ is a strict fibration, since, as in the proof of Cor.\ \ref{cor:pb}, the more general case will follow almost automatically.
We have the pullback diagram \eqref{diag:strict_pullback3} from Prop.\ \ref{prop:strict-pb}:
\[
\begin{tikzpicture}[operad style,row sep=2em,column sep=4em,]
\matrix (m) [matrix of math nodes]
  {  
(\ti{L}, \ti{d},\ti{Q})  \& \bigl (  L, d,Q  ) \\
\bigl   (L', d', Q') \& \bigl (  L'', d'',  Q'' ) \\
}; 
  \path[->,font=\scriptsize] 
   (m-1-1) edge node[auto] {$\ppr H$} (m-1-2)
   (m-2-1) edge node[auto] {$\Tha$} (m-2-2)
  ;

  \path[->>,font=\scriptsize] 
   (m-1-1) edge node[auto,swap] {$\ppr'H$} (m-2-1)
   (m-1-2) edge node[auto] {$\Ph$} (m-2-2)
;
%begin pullback symbol%
  \begin{scope}[shift=($(m-1-1)!.4!(m-2-2)$)]
  \draw +(-0.25,0) -- +(0,0)  -- +(0,0.25);
  \end{scope}
  %end pullback symbol%
\end{tikzpicture}
\]
We wish to show that for all $m \geq 0$
\[
\begin{tikzpicture}[operad style,row sep=2em,column sep=4em,]
\matrix (m) [matrix of math nodes]
  {  
\sMC_m(\ti{L})  \& \sMC_m ( L ) \\
  \sMC_m(L') \& \sMC_m(L'') \\
}; 
  \path[->,font=\scriptsize] 
   (m-1-1) edge node[auto] {$\sMC_m(\ppr H)$} (m-1-2)
   (m-2-1) edge node[auto] {$\sMC_m(\Tha)$} (m-2-2)
   (m-1-1) edge node[auto,swap] {$\sMC_m(\ppr'H)$} (m-2-1)
   (m-1-2) edge node[auto] {$\sMC_m(\Ph)$} (m-2-2)
;
% %begin pullback symbol%
%   \begin{scope}[shift=($(m-1-1)!.4!(m-2-2)$)]
%   \draw +(-0.25,0) -- +(0,0)  -- +(0,0.25);
%   \end{scope}
%   %end pullback symbol%
\end{tikzpicture}
\]
is a pullback diagram of sets. As mentioned in Remark \ref{rmk:MC}, there is a natural isomorphism
\[
\sMC_m ( L )= \MC(L \ctensor \Om_m) \cong \plim_n \MC(L/\cF_nL \tensor \Om_m).   
\]
Since projective limits commute with pullbacks, it suffices to prove the following:

\begin{proposition} \label{prop:MC-pb}
Suppose $\Ph \maps (L,d,Q) \fib (L'',d'',Q'')$ is a strict fibration in $\bndfiltLie$ and $\Tha \maps (L',d',Q') \to (L'',d'',Q'')$ is an arbitrary morphism. Let $(\ti{L},\ti{d},\ti{Q})$ be the pullback of the diagram
$(L',d',Q') \xto{\Tha} (L'',d'',Q'') \xleftarrow{\Ph} (L,d,Q)$ in $\cLie$
as constructed in Prop.\ \ref{prop:strict-pb}. Then for any $B \in \cdga$, 
the commutative diagram of sets
\[
\begin{tikzpicture}[operad style,row sep=2em,column sep=4em,]
\matrix (m) [matrix of math nodes]
  {  
\MC(\ti{L} \tensor B)  \& \MC( L \tensor B ) \\
  \MC(L'\tensor B) \& \MC(L''\tensor B) \\
}; 
  \path[->,font=\scriptsize] 
   (m-1-1) edge node[auto] {$(\ppr H)_{B\ast}$} (m-1-2)
   (m-2-1) edge node[auto] {$\Tha_{B \ast}$} (m-2-2)
   (m-1-1) edge node[auto,swap] {$(\ppr'H)_{B \ast}$} (m-2-1)
   (m-1-2) edge node[auto] {$\Ph_{B \ast}$} (m-2-2)
;
% %begin pullback symbol%
%   \begin{scope}[shift=($(m-1-1)!.4!(m-2-2)$)]
%   \draw +(-0.25,0) -- +(0,0)  -- +(0,0.25);
%   \end{scope}
%   %end pullback symbol%
\end{tikzpicture}
\]
is a pullback diagram.
\end{proposition}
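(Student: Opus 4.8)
The plan is to verify the universal property of a pullback of sets directly, by producing an explicit bijection between $\MC(\ti{L}\tensor B)$ and the fiber product $\MC(L\tensor B)\times_{\MC(L''\tensor B)}\MC(L'\tensor B)$. I would start from the explicit description of the pullback object furnished by Prop.\ \ref{prop:strict-pb}: as a filtered graded vector space $\ti{L}=L'\dsum\ker F^1_1$, the codifferential is $\ti{Q}=J\,Q_\dsum\,H\vert_{\S(\ti L)}$ with $H,J$ the mutually inverse filtration-compatible coalgebra isomorphisms of \eqref{eq:H}--\eqref{eq:J}, and the two legs are $\ppr H$ and $\ppr' H$. Throughout I work inside the $L_\infty$-algebras tensored with $B$ (all bounded filtered, hence already complete by Prop.\ \ref{prop:nilbnd-cdga}), and write $F_\ast,G_\ast,H_\ast,\dots$ for the pushforward functions \eqref{eq:MC_F} of the correspondingly tensored morphisms; these agree with the maps $(-)_{B\ast}$ of the diagram, and the pushforward is functorial via \eqref{eq:MC-func}.

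First I would record the two legs on Maurer--Cartan sets. Since $F$ and $\ppr'H$ are strict, and since $-\tensor B$ is exact so that $\ker(F^1_1\tensor\id_B)=\ker F^1_1\tensor B$, writing $\ti\al=(\xi',\zeta)\in(\ti L\tensor B)^0$ with $\zeta\in\ker F^1_1\tensor B$, a short computation from \eqref{eq:H} together with \eqref{eq:MC_F} gives
\[
H_\ast(\ti\al)=\bigl(\xi',\,\zeta+\si_B\,G_\ast(\xi')\bigr),\qquad \si_B:=\si\tensor\id_B,
\]
where $\si$ is the section of $F^1_1$ from Lemma \ref{lem:split}. Hence $(\ppr'H)_\ast\ti\al=\xi'$ and $(\ppr H)_\ast\ti\al=\zeta+\si_B G_\ast(\xi')$. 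Using $F^1_1\si=\id_{L''}$ and $\zeta\in\ker(F^1_1\tensor\id)$ one sees immediately that $F_\ast\bigl((\ppr H)_\ast\ti\al\bigr)=G_\ast\bigl((\ppr'H)_\ast\ti\al\bigr)$ for \emph{every} $\ti\al$, so the image lands in the fiber product; conversely $(\be',\be)\mapsto(\xi',\zeta):=(\be',\,\be-\si_B G_\ast\be')$ is the only possible inverse, with $\zeta$ landing in $\ker F^1_1\tensor B$ precisely because $F_\ast\be=G_\ast\be'$. This pins down injectivity and the image of the comparison map at the level of underlying sets, leaving only the Maurer--Cartan conditions to be matched.

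The heart of the argument is the equivalence $\ti\al\in\MC(\ti L\tensor B)\iff H_\ast\ti\al\in\MC\bigl((L'\dsum L)\tensor B\bigr)$, which I would obtain from the exponential formulas of Remark \ref{rmk:exp} rather than the curvature--morphism identity \eqref{eq:curv-morph}. Since $H$ is a filtration-compatible coalgebra morphism, \eqref{eq:exp3} gives $H(\exp\ti\al-1)=\exp(H_\ast\ti\al)-1$, and combining \eqref{eq:deltilde} with \eqref{eq:exp1} yields
\[
\ti{Q}(\exp\ti\al-1)=J\,Q_\dsum\bigl(\exp(H_\ast\ti\al)-1\bigr)=J\bigl(\exp(H_\ast\ti\al)\cdot\curv_\dsum(H_\ast\ti\al)\bigr).
\]
Because $J$ is a coalgebra \emph{isomorphism} (hence injective) and $\exp(H_\ast\ti\al)=1+\cdots$ acts injectively on each symmetric power, the left-hand side vanishes --- equivalently $\curv_{\ti Q}(\ti\al)=0$ --- if and only if $\curv_\dsum(H_\ast\ti\al)=0$. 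Finally $\curv_\dsum=\curv_L\times\curv_{L'}$ (the computation underlying Prop.\ \ref{prop:sMC-prod}), so $H_\ast\ti\al$ is Maurer--Cartan exactly when both components $(\ppr H)_\ast\ti\al$ and $(\ppr'H)_\ast\ti\al$ are. Assembling the three paragraphs, $\ti\al\mapsto\bigl((\ppr'H)_\ast\ti\al,(\ppr H)_\ast\ti\al\bigr)$ is a bijection onto $\{(\be',\be):F_\ast\be=G_\ast\be'\}$, which is precisely the assertion that the square of Maurer--Cartan sets is a pullback.

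I expect the main obstacle to be the equivalence $\curv_{\ti Q}(\ti\al)=0\iff\curv_\dsum(H_\ast\ti\al)=0$: one must be careful that $\ti Q=J\,Q_\dsum\,H$ is legitimately applied to the completed group-like element $\exp\ti\al-1\in\widehat\S(\ti L\tensor B)$ (which is where boundedness of the filtration is used), and that it is the \emph{invertibility} of $J$, rather than mere functoriality, that upgrades the easy implication ``$\ti\al$ Maurer--Cartan $\Rightarrow H_\ast\ti\al$ Maurer--Cartan'' (just functoriality of $\MC$ along the $L_\infty$-morphism \eqref{eq:H-morph}) into the full equivalence. Everything else is routine bookkeeping with the explicit formulas for $H$, $J$, and $\si$.
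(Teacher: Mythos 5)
Your proposal is correct and follows essentially the same route as the paper: both reduce the statement to the explicit coalgebra isomorphisms $H$ and $J$ from the pullback construction, use the exponential identities of Remark \ref{rmk:exp} together with $\ti{Q}=J\,Q_\dsum\,H$ to transport the Maurer--Cartan equation across $H_{B\ast}$, and finish with $\curv_\dsum=\curv_L\times\curv_{L'}$. The only cosmetic difference is that the paper proves one implication by functoriality of $\MC$ and then checks that the explicit inverse $J_{B\ast}\vert_{P}$ lands in $\MC(\ti{L}\tensor B)$, whereas you package both directions into a single biconditional via the invertibility of $J$; the underlying computation is identical.
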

\begin{proof}
First, we fix some notation. Let $P \sse (L' \tensor B)^0  \times (L \tensor B)^0$ denote the pullback of the diagram 
\[
(L' \tensor B)^0 \xto{\Tha_{B \ast}} (L'' \tensor B)^0 \xleftarrow{\Ph_{B\ast}} (L \tensor B)^0
\]
in the category of sets, and let $P_{\MC}$ denote the pullback of the
diagram
$\MC(L' \tensor B) \xto{\Tha_{B \ast}} \MC(L'' \tensor B)
\xleftarrow{\Ph_{B\ast}} \MC(L \tensor B)$. We have
$P_{\MC} \sse \MC \bigl( (L' \tensor B) \times (L \tensor B) \bigr)
\cong \MC \bigl ( (L' \times L) \tensor B \bigr)$. From the
$\infty$-morphism \eqref{eq:H-morph}, we obtain the function
$H_{B\ast} \maps \MC(\ti{L} \tensor B) \to \MC \bigl ( (L' \times L)
\tensor B \bigr)$. From the commutativity of diagram
\eqref{diag:strict_pullback3} from Prop.\ \ref{prop:strict-pb}, we
deduce that $H_{B \ast}$ factors through the pullback:
\[
H_{B \ast} \maps \MC(\ti{L} \tensor B) \to P_{\MC}. 
\]
To complete the proof, we will construct an inverse to $H_{B \ast}$. Let $f \maps P \to (L' \tensor B)^0 \times (L \tensor B)^0$ denote the function
\[
f(a',a):= \bigl(a', a - (s \tensor \id_B) \circ \Tha_{B \ast}(a') \bigr).
\]
In other words, $f$ is the restriction of $J_{B \ast}$ to $P \sse (L' \times L)  \tensor B$,
where $J_{B \ast}$ is the polynomial function induced from the coalgebra map $J \maps \S(L' \times L) \to \S(L' \times L)$ defined in \eqref{eq:J}. Since $J$ is the inverse to $H$, in order to show that $f \vert_{P_{\MC}}$ is the inverse to $H_{B \ast}$, it is sufficient to verify that $\im f \vert_{P_{\MC}} \sse \MC(\ti{L} \tensor B)$.

We first observe that $\im f \sse ( \ti{L} \tensor B)^0$. Indeed, since $\Ph$ is strict, we have $\Ph_{B \ast} = \sPh \tensor \id_B$. Therefore, if $(a',a)\in (L' \tensor B)^0 \times (L \tensor B)^0$ such that $\Ph_{B \ast}(a)=\Tha_{B \ast}(a')$, then it follows that 
\[
a - (s \tensor \id_B) \circ \Tha_{B \ast}(a') \in  \ker (\sPh \tensor \id_B) = \ker (\sPh) \tensor B.
\]
Hence $f(a',a) \in ( \ti{L} \tensor B)^0$.

Now suppose $(\al',\al) \in P_{\MC}$. We will show that $\tcurv_{B}(f(\al',\al))=0$, where $\tcurv_{B}$ is the curvature function of $(\ti{L} \tensor B,\wti{Q}_B)$. From Eq.\ \ref{eq:exp2} we see that it is sufficient to prove that $\wti{Q}_B\bigl( \exp(f(\al',\al))-1 \bigr) =0$. As mentioned above, by definition, we have $f(\al',\al)= J_{B \ast}(\al',\al)$. Therefore Eq.\ \ref{eq:exp3} in Remark \ref{rmk:exp} implies that $\exp(f(\al',\al))-1 = J_B(\exp(\al',\al)-1).$ On the other hand, from the definition of $\wti{Q}$ in Eq.\ \ref{eq:deltilde}, we have $\wti{Q}_B = J_B \circ Q_{\times B} \circ H_{B}$, where $Q_{\times}$ is the $\sinf$-structure on the product $(L' \times L)$. Therefore, since $HJ=\id_{\S(L' \times L)}$, the equalities
\[
\wti{Q}_B\bigl( \exp(f(\al',\al))-1 \bigr) = \wti{Q}_BJ_B(\exp(\al',\al)-1)
= J_BQ_{\times B}(\exp(\al',\al)-1)  
\]
hold. Applying Eq.\ \ref{eq:exp2} to $Q_{\times B}(\exp(\al',\al)-1)$ then gives us 
\[
\wti{Q}_B\bigl( \exp(f(\al',\al))-1 \bigr) = J_B \Bigl (\exp(\al',\al)\,  \curv_{\times B}(\al',\al) \Bigr)
\]
where $\curv_{\times B}$ is the curvature function for $(L' \times L) \tensor B$. And finally, since $(\al',\al) \in P_{\MC}$, we have
$\curv'_B(\al')=\curv_B(\al)=0$, and hence $\curv_{\times B}(\al',\al)=0$.  Therefore, we conclude that $\wti{Q}_B\bigl( \exp(f(\al',\al))-1 \bigr) =0$, and this completes the proof.

% , where
% $\curv'_{B}$, $\curv_{B}$, and $\curv_{\tensor B}$ are the curvature functions for $L'\tensor B ,L\tensor B$, respectively. 
\end{proof}

\section{Obstruction theory for lifting Maurer-Cartan elements} \label{sec:obstruct}
Let us say that two Maurer-Cartan elements $\al, \be \in \MC(L)$ in a $\cinf$-algebra $\LdQ{}$ are \df{equivalent} if $[\al]=[\be] \in \pi_0(\sMC(L))$. The results of the previous sections provide the means to completely characterize the general problem of lifting Maurer-Cartan elements through an $\infty$-morphism in $\cLie$. 
\begin{liftprob} \label{lp1}
Let $\Ph \maps \LdQ{} \to \LdQ{\prime}$ be a morphism in $\cLie$, and let $\al' \in \MC(L')$ be a Maurer-Cartan element in $L'$. Does there exist a Maurer-Cartan element $\al \in \MC(L)$ such that $\Ph_\ast(\al)$ is equivalent to $\al'$?
\end{liftprob}
%By ``equivalent'', we mean in the sense of  Def.\ \ref{def:MCequiv}.
It is conceptually useful to reformulate the problem purely in terms of simplicial Maurer-Cartan sets.
Let $f \maps X \to Y$ in $\Kan$ denote the map $\sMC(\Ph) \maps \sMC(L) \to \sMC(L')$. We seek a vertex $\al \in X_0$ such that $[f(\al)] = [\al'] \in \pi_0(Y)$. Recall that $Y$ is fibrant in $\sSet$, while every simplicial set is cofibrant. Hence,
a (left or right) homotopy between any two maps into $Y$ can be realized by \textit{any chosen path object} $Y \to Y^{I} \xto{(d_0,d_1)} Y \times Y$. (See, for example, \cite[Cor.\ II.1.9]{GJ}.) Therefore, the problem rephrased asks for maps $\al \maps \Del^0 \to X$, and $\eta \maps \Del^0 \to Y^I$ such that $f \al = d_0\eta$ and $ \al' = d_1 \eta$. Now consider the pullback diagram 
in $\Kan$:
\[
\begin{tikzdiag}{2}{2}
{
X \times_Y Y^I \& Y^I \\
X \& Y\\
};

\path[->,font=\scriptsize]
(m-1-1) edge node[auto] {$\pr_2$} (m-1-2) 
(m-2-1) edge node[auto] {$f$} (m-2-2)
;
\path[->>,font=\scriptsize]
(m-1-1) edge node[auto,swap] {$\pr_1$} node[sloped,above] {$\sim$}(m-2-1) 
(m-1-2) edge node[auto,swap] {$d_0$} node[sloped,above] {$\sim$}  (m-2-2)
;

\pbdiag
\end{tikzdiag}
\]
This is precisely the diagram \eqref{diag:factor} recalled in Sec.\ \ref{sec:factor} which gives the factorization $X \to X \times_Y Y^I \xto{p_f} Y$ of the morphism $f$ into a weak homotopy equivalence followed by the Kan fibration $p_f:=d_1 \pr_2$. Hence, solutions to the lifting problem are in one-to-one correspondence with vertices $(\al,\eta) \maps \Del^0 \to X \times_Y Y^I$ such that $p_f \cc (\al,\eta) = \al'$.   

Since the functor $\sMC \maps \cLie \to \Kan$ is exact, it preserves path objects, and pullbacks along fibrations. Hence, it preserves factorizations via path objects in the sense of Sec.\ \ref{sec:factor}. Let us choose a path object  $L' \to L^{\prime I} \xto{(d_0,d_1)} L' \times L'$ in $\cLie$ for $L'$. Then the above discussion implies that Lifting Problem \ref{lp1} is equivalent to:

\begin{liftprobbis}{lp1}[Equivalent]  \label{lp1p}
Let $\Ph \maps \LdQ{} \to \LdQ{\prime}$ be any morphism in $\cLie$, and $\al' \in \MC(L')$  a Maurer-Cartan element in $L'$. Denote by $P_{\Ph} \maps L \times_{L'} L^{\prime I} \fib L'$ the
fibration in $\cLie$  obtained by factoring $\Ph$ via the path object $L^{\prime I}$. Does there exist a Maurer-Cartan element \newline $\al \in \MC(L \times_{L'} L^{\prime I})$ such that  ${P_{\Ph}}_\ast(\al) = \al'$  in $\MC(L')$?
\end{liftprobbis}

\newcommand{\vps}{\psi}

\subsection{Reminder on cones and the associated graded complex}
Theorem \ref{thm:lp} below characterizes the obstruction to solving \eqref{lp1p} in terms of the degree zero cohomology of the associate graded mapping cone $\Cone (\Gr(\sPh)) \in \Ch$ for the tangent map $\ctan(\Ph)$ of $\Ph \maps \LdQ{} \to \LdQ{\prime}$. 

\begin{notation}\label{note:cone}
We denote by $(\Cone(\vps),\pa_\vps) \in \Ch$ the mapping cone of a morphism $\vps \maps (V,d) \to (V',d')$ of cochain complexes. Its underlying graded vector space in degree $n$ is  $\Cone(\vps)^n:= V^n \dsum {V'}^{n-1}$, and the differential  $\pa_\vps \maps \Cone(\vps)^n \to \Cone(\vps)^{n+1}$ is
\begin{equation} \label{eq:conediff}
\pa_\vps(v,v'):= \bigl( dv, \vps(v)-d'v' \bigr).
\end{equation}
\end{notation}

If $\vps \maps (V,d) \to (V',d')$ is a morphism in $\cCh$, then 
$(\Cone(\vps),\pa_\vps)$ is a complete cochain complex as well with respect to the filtration
$\cF_{n} \Cone(\vps) := \cF_nV \times \cF_n{\bs V'}$. 

Recall that the associated graded complex of $(V,d) \in \cCh$ is the cochain complex $(\Gr(V),d_{\Gr}) \in \Ch$ with underlying graded vector space
\begin{equation*} %\label{eq:gr}
\Gr(V):= \bigoplus_{n > 1} \Gr_n(V), \quad \Gr_n(V):= \cF_{n-1}V/\cF_{n}V,
\end{equation*}
and differential $d_{\Gr} \vert_{\Gr_n(V)}:= \qq{d}{n} \maps  \cF_{n-1}V/\cF_{n}V \to \cF_{n-1}V/\cF_{n}V$.
When we wish to emphasize the induced bigrading on the cohomology of $(\Gr(V), d_{\Gr})$, we use the standard notation: 
\[
H^{p-n,n}(\Gr(V)):=H^p(\Gr_n(V),\qq{d}{n}).
\]  
Given a morphism $\vps \maps (V,d) \to (V',d')$ in $\cCh$, the induced map on the associated graded complexes $\Gr(\vps) \maps (\Gr(V),d_{\Gr}) \to (\Gr(V'),d'_{\Gr})$ is defined as $\Gr(\vps):= \bigoplus_n \Gr_n(\vps)$ where
\[
\Gr_n(\vps):= \cF_{n-1} \qq{\vps}{n} \maps \cF_{n-1}V/\cF_{n}V \to \cF_{n-1}V'/\cF_{n}V'.
\]
Hence, ``taking the associated graded'' commutes with the mapping cone construction, and so we make the tacit identification
\[
\Bigl (\Gr \bigl ( \Cone(\vps) \bigr), {\pa_\vps}_{\Gr} \Bigr ) = \Bigl (\Cone \bigl (\Gr (\vps) \bigr), \pa_{\Gr(\vps)} \Bigr).
\]

\subsection{Lifting problem for strict fibrations}

We first consider the special case of Lifting Problem \ref{lp1}, when the $\infty$-morphism is a strict fibration. In what follows, let $\vps=\Psi^1_1 \maps \LdQ{} \fib \LdQ{\prime}$ be a strict fibration in $\cLie$, and let $\al' \in \MC(L')$ be a  Maurer-Cartan element of $L'$. For each $m \geq 1$, we denote by $\qq{\al'}{m}:= \qq{p'}{m}(\al')$ the image of $\al'$ in the Maurer-Cartan set 
$\MC(\qf{L'}{m})$ of the quotient $(\qf{L'}{m}, \qq{d'}{m}, \qq{Q'}{m})$.

\begin{proposition} \label{prop:lpf}
Let $n > 1$ and suppose $\qq{\al}{n-1} \in \MC(\qf{L}{n-1})$ is a Maurer-Cartan element such that $\qq{\vps}{n-1}(\qq{\al}{n-1}) = \qq{\al'}{n-1}$. Let $a \in (\qf{L}{n})^0$ be any degree 0 element such that 
$\qpp{n-1}{a} = \qq{\al}{n-1}$. Then:
\begin{enumerate}

\item The element 
\[
w_n^\vps(a):= \bigl( \bcurv{n}(a), \qq{\vps}{n}(a) - \qq{\al'}{n} \bigr) \in
\bigl( \cF_{n-1}L/\cF_{n}L \bigr)^1 \dsum  \bigl( \cF_{n-1}L'/\cF_{n}L' \bigr)^0 
\]
is a degree 0 cocycle in the subcomplex $\Cone( \cF_{n-1} \qq{\vps}{n})
\sse \Cone( \Gr(\vps))$.

\item The class 
\[
W^\vps_n(\al'):=\bigl [w_n^\vps(a) \bigr] \in H^{-n,n}\bigl(\Cone \bigl (\Gr (\vps) \bigr) \bigr)
\]
is independent of the choice of  the element $a$ in the pre-image $\ba{p}_{(n-1)}^{\, -1}(\qq{\al}{n-1}) \sse (\qf{L}{n})^0$.
\item There exists a Maurer-Cartan element $\qq{\al}{n} \in \MC(\qf{L}{n})$ such that
\[
\qq{\vps}{n} \bigl(\qq{\al}{n} \bigr) = \qq{\al'}{n}, \quad \text{and} \quad  \qpp{n-1}(\qq{\al}{n}) = \qq{\al}{n-1}
\]  
if and only if $W^\vps_n(\al')=0$. 
\item The intersection of fibers 
\[
\ba{p}_{(n-1) \ast}^{\, -1}(\qq{\al}{n-1}) ~ \cap ~   \psi_{(n) \ast}^{\, -1} (\qq{\al'}{n}) 
~ \sse \MC(\qf{L}{n})
\]
is either empty or 
a torsor over the abelian group of degree 0 cocycles $Z^0( \ker \Gr_n(\vps))$.

\end{enumerate}
\end{proposition}

\begin{proof}
The proofs of the four statements are quite similar to those of Prop.\ \ref{prop:obstruct}.
\begin{enumerate}[leftmargin=15pt]

\item{ Statement 1 of Prop. \ref{prop:obstruct} implies that $\qq{d}{n}\bcurv{n}(a) =0$. 
Therefore, to verify $\pa_{\Gr(\vps)} w_n^\vps(a)=0$, we see from Eq.\ \ref{eq:conediff} that it suffices to show that 
\[
\qq{\vps}{n}(\bcurv{n}(a)) - \qq{d'}{n}y =0
\]
where $y:=\qq{\vps}{n}(a) -\qq{\al'}{n}$.
Since $\vps$ is a strict morphism in $\cLie$, we have $\vps_\ast = \vps$, and so Eq.\ \ref{eq:curv-morph} implies that  
\[
\qq{\vps}{n}(\bcurv{n}(a)) = \bcurvp{n}\bigl(\qq{\vps}{n}(a)\bigr).
\]
From the equality $\qpr{n} \cc \qq{\vps}{n} = \qq{\vps}{n-1} \cc \qpp{n-1}$ along with the hypotheses on $a$ and $\qq{\al}{n-1}$, it follows that $y \in \ker \qpr{n}=\cF_{n-1}L'/\cF_nL'$. Therefore, Eq.\ \ref{eq:bnd-curv} implies that
\[
\begin{split}
\bcurvp{n}\bigl(\qq{\vps}{n}(a)\bigr) = \bcurvp{n}( \qq{\al'}{n} + y) =  \bcurvp{n}(\qq{\al'}{n}) + \qq{d'}{n}y.
\end{split}
\] 
Since $\qq{\al}{n}$ is Maurer-Cartan, we conclude that $\bcurvp{n}\bigl(\qq{\vps}{n}(a)\bigr) =\qq{d'}{n}y$. }

\item{Suppose $\ti{a} \in  \ba{p}_{(n-1)}^{\, -1}(\qq{\al}{n-1})$. Then $x:=\ti{a} -a \in \ker
\ba{p}_{(n-1)} = \cF_{n-1}L/\cF_nL$. Since the $\cinf$-algebra $\qf{L}{n}$ is bounded at $n$, Eq.\ \ref{eq:bnd-curv} of Lemma \ref{lem:bnd-curv} implies that $\bcurv{n}(\ti{a})      
- \bcurv{n}(a) = \qq{d}{n}x$. Hence, we have $w_n^\vps(\ti{a}) -  w_n^\vps(a) = \pa_{\Gr(\vps)} (x,0)$.
}

\item {
If $\qq{\al}{n} \in \MC(\qf{L}{n})$ is a Maurer-Cartan element such that
\[
\qq{\vps}{n} \bigl(\qq{\al}{n} \bigr) = \qq{\al'}{n}, \quad \qpp{n-1}(\qq{\al}{n}) = \qq{\al}{n-1},
\]  
then $w_n^\vps(\qq{\al}{n})=0$. Hence, by (2) above, we have $[w_n^\vps(a) \bigr] = 
[w_n^\vps(\qq{\al}{n})]=0$. Conversely, suppose there exists a degree $-1$ element
\[
(y,y') \in \bigl( \cF_{n-1}L/\cF_{n}L \bigr)^0 \dsum  \bigl( \cF_{n-1}L'/\cF_{n}L' \bigr)^{-1} 
\] 
such that $w_n^\vps(a) =  \pa_{\Gr(\vps)}(y,y')$, i.e.\
\begin{equation} \label{eq:triv}
%\begin{split}
\bcurv{n}(a) = \qq{d}{n}y, \qquad \qq{\vps}{n}(a) - \qq{\al'}{n}  = \qq{\vps}{n}(y) -\qq{d'}{n}y'.  
% \\ 
% \qq{\vps}{n}(a) - \qq{\al'}{n}  &= \qq{\vps}{n}(y) -\qq{d'}{n}y'
% w_n^\vps(a) &= \bigl( \bcurv{n}(a), \qq{\vps}{n}(a) - \qq{\al'}{n} \bigr)\\
% &= \pa_{\Gr(\vps)}(y,y') =\bigl( \qq{d}{n}y, \qq{\vps}{n}(y) -\qq{d'}{n}y' \bigr).
%\end{split}
\end{equation}
Since $\vps$ is a fibration, $\cF_{n-1}\qq{\vps}{n}$ is surjective. Therefore, there exists $\ti{y} \in (\cF_{n-1}L/\cF_{n}L \bigr)^{-1}$ such that $\qq{\vps}{n}(\ti{y})=y'$.
Define $\qq{\al}{n}:= a - y + \qq{d}{n} \ti{y} \in (\qf{L}{n})^0$. Then $\qpp{n-1}(\qq{\al}{n}) = \qq{\al}{n-1}$, since $y$ and  $\ti{y}$ are elements of $\ker \qpp{n-1}$. Furthermore, since $y$ and $ \qq{d}{n}\ti{y} \in 
(\cF_{n-1}L/\cF_{n}L \bigr)^{0}$, Eq.\ \ref{eq:bnd-curv} of Lemma \ref{lem:bnd-curv} along with Eq.\ \ref{eq:triv} above imply that
\[
\bcurv{n}(\qq{\al}{n})= \bcurv{n}(a) + \qq{d}{n}(-y + \qq{d}{n}\ti{y})=0.
\]
Finally, since $\qq{\vps}{n}(\qq{\al}{n}) = \qq{\vps}{n}(a) - \qq{\vps}{n}(y) + \qq{d'}{n}\vps(\ti{y})$, Eq.\ \ref{eq:triv} implies that  $\qq{\vps}{n}(\qq{\al}{n}) = 
\qq{\al'}{n}$.
}

\item{ 
We proceed exactly as in the proof of statement 4 of Prop.\ \ref{prop:obstruct}. Suppose $\qq{\al}{n}, \qq{\ti{\al}}{n} \in \MC(\qf{L}{n})$ are both Maurer-Cartan elements such that 
$\qpp{n-1}_\ast(\qq{\al}{n})=\qpp{n-1}_\ast(\qq{\ti{\al}}{n}) = \qq{\al}{n-1}$, and
${\qq{\vps}{n}}_\ast(\qq{\al}{n})={\qq{\vps}{n}}_\ast(\qq{\ti{\al}}{n}) = \qq{\al'}{n}$. Then their difference $\eta:=\qq{\al}{n} - \qq{\ti{\al}}{n}$ lies in $(\cF_{n-1}L/\cF_{n}L \bigr)^{0}$ and in $\ker \qq{\vps}{n}$. Hence, $\eta$ is a degree zero element in $\ker( \Gr_n(\vps))$. The equality 
$\bcurv{n}(\qq{\al}{n}) = \bcurv{n}(\qq{\ti{\al}}{n})=0$, along with Eq.\ \ref{eq:bnd-curv} imply that 
$\eta$ is a cocycle. 

Reversing this argument shows that adding any such cocycle $\eta$ to a Maurer-Cartan element $\qq{\al}{n}$ satisfying  $\qpp{n-1}_\ast(\qq{\al}{n}) = \qq{\al}{n-1}$ and 
${\qq{\vps}{n}}_\ast(\qq{\al}{n})=\qq{\al'}{n}$, produces a new  
Maurer-Cartan element $\qq{\ti{\al}}{n}:= \qq{\al}{n} + \eta$ for which the same equalities hold.
}
\end{enumerate}
\itemqed
\end{proof}

\begin{remark}\label{rmk:lpf}
For each $n > 1$, the cohomology class $W^\vps_{n+1}(\al')$ is only well-defined if
$W^\vps_{n}(\al') =0$. The first obstruction class $W^\vps_2(\al')$ for the $n=2$ base case  
is always well-defined since $\qf{L}{1} = \qf{L'}{1}=0$.

\end{remark}
%CLR

\subsection{Lifting problem: the general case}
As in the statement of Lifting Problem \eqref{lp1p}, let $\Ph \maps \LdQ{} \to \LdQ{\prime}$ be a morphism with $\vph:= \ctan(\Ph)= \Ph^1_1$ and $\al' \in \MC(L')$ a Maurer-Cartan element. Fix, once and for all, a path object\footnote{From here on, we omit the differentials and codifferentials in order to keep the notation manageable.} ${L'}^{I} \in \cLie$ for $L'$. Let 
\[
L \xrightarrow[{}^{\sim}]{i} L \times_{L'} L^{\prime I} \xfib{P_{\Ph}} L'
\]
denote the corresponding factorization in $\cLie$ of $\Ph$,  where $i$ is a weak equivalence and $P_{\Ph}$ is a fibration. Let $p_{\Phi}:= (P_\Ph)^1_1$ denote the linear term of $P_{\Ph}$. 
Since the tangent map $\ctan(i)$ is a weak equivalence in $\cCh$, it induces a quasi-isomorphism between the associated graded mapping cones
\begin{equation} \label{eq:qiso-cone}
i_{\sharp} \maps \Cone \bigl (\Gr (\vph) \bigr) \xto{\sim} \Cone \bigl (\Gr (p_{\Phi}) \bigr).
\end{equation}  
By Prop.\ \ref{prop:strict_fib}, there is a commuting diagram in $\cLie$ 
\[
\begin{tikzdiag}{2}{1}
{
\wti{L \times_{L'} L^{\prime I}} \&  \& L \times_{L'} L^{\prime I} \\
\& L'  \& \\
};
\path[->,font=\scriptsize]
(m-1-1) edge node[auto] {$\cong$} node[auto,swap] {$$} (m-1-3)
;
\path[->>,font=\scriptsize]
(m-1-1) edge node[auto,swap] {$\wti{P_{\Ph}}$} (m-2-2)
(m-1-3) edge node[auto] {${P_{\Ph}}$} (m-2-2)
 ;
\end{tikzdiag}
\]
where $\wti{P_{\Ph}}$ is a strict fibration whose linear term $\wti{p_\Ph}$ is equal to the linear term 
${p_\Ph}$ of $P_\Ph$. Moreover, as mentioned in Rmk.\ \ref{rmk:strict_fib}, we have an identification of tangent complexes 
\[
\ctan \Bigl( \wti{L \times_{L'} L^{\prime I}} \xto{\wti{P_{\Ph}}} L'  \Bigr)  = 
\ctan \Bigl( {L \times_{L'} L^{\prime I}} \xto{{P_{\Ph}}} L'  \Bigr).  
\]  
Therefore, we may rewrite the quasi-isomorphism \eqref{eq:qiso-cone} as
\begin{equation} \label{eq:qiso-cone2}
i_{\sharp} \maps \Cone \bigl (\Gr (\vph) \bigr) \xto{\sim} \Cone \bigl (\Gr (\wti{p_{\Phi}}) \bigr).
\end{equation}  
Since $\wti{P_{\Ph}} \maps \wti{L \times_{L'} L^{\prime I}} \to L'$ is a strict fibration, Prop.\ \ref{prop:lpf} implies that the problem of lifting $\al' \in \MC(L')$ through $\wti{P_{\Ph}}$ 
is characterized by the sequence of cohomology classes
\[
W^{p_{\Phi}}_n(\al') \in H^{-n,n}\bigl(\Cone \bigl (\Gr \wti{p_{\Phi}} \bigr) \bigr) \quad  \forall n > 1.
\]  
whenever they are well-defined (Rmk.\ \ref{rmk:lpf}). 

Returning to our original morphism $\Ph \maps \LdQ{} \to \LdQ{\prime}$, for each $n > 1$,
we define the \df{{\boldmath $n$}th order obstruction class} to be the unique cohomology class
in the associated graded mapping cone
\[
\omega_n^{\Ph}(\al') \in H^{-n,n}\bigl(\Cone \bigl (\Gr (\vph) \bigr) \bigr)
\]   
whose image $i_{\sharp}(\omega_n^{\Ph}(\al'))$ under the quasi-isomorphism \eqref{eq:qiso-cone2} 
is the class $W^{p_{\Phi}}_n(\al')$. Thus, in light of Prop.\ \ref{prop:lpf} and the equivalence of Lifting Problems
\eqref{lp1} and \eqref{lp1p}, we have proved the following result: 
\begin{theorem}\label{thm:lp}
Let $\Ph \maps \LdQ{} \to \LdQ{\prime}$ be a morphism in $\cLie$, and let $\al' \in \MC(L')$ be a Maurer-Cartan element in $L'$. Then there exists a Maurer-Cartan element $\al \in \MC(L)$ such that $\Ph_\ast(\al)$ is equivalent to $\al'$ if and only if the obstruction classes
$\omega_n^{\Ph}(\al') \in H^{-n,n}\bigl(\Cone \bigl (\Gr (\vph) \bigr) \bigr)$ vanish for all $n > 1$.

\end{theorem}

\end{document}